\numberwithin{equation}{section}
\newcommand{\gm}{\gamma}
\newcommand{\FF}{\mathcal{F}}
\newcommand{\be}{\begin{equation}}
\newcommand{\ee}{\end{equation}}
\newcommand{\bea}{\begin{eqnarray}}
\newcommand{\eea}{\end{eqnarray}}
\newcommand{\lmd}{\lambda}
\newcommand{\tu}{\tilde{u}}
\newcommand{\tbx}{\tilde{\bx}}
\newcommand{\tby}{\tilde{\by}}
\newcommand{\tphi}{\tilde{\phi}}
\newcommand{\tOmega}{\tilde{\Omega}}
\newcommand{\tdelta}{\tilde{\delta}}
\def\bx{\mathbf{x}}
\def\bk{\mathbf{k}}
\def\bm{\mathbf{m}}
\def\by{\mathbf{y}}
\def\bog{{_\Omega}}
\def\botg{{_{\tilde \Omega}}}
\newtheorem{remark}{Remark}[section]
\title{Fundamental Gaps of the Fractional Schr\"{o}dinger Operator}
\author{Weizhu Bao\thanks{Department of Mathematics, National University of Singapore, Singapore 119076, Singapore (matbaowz@nus.edu.sg, URL: http://www.math.nus.edu.sg/\~{}bao/). This author's research was supported by the Academic Research Fund of Ministry of Education of Singapore grant
R-146-000-223-112.}
\and Xinran Ruan\thanks{Corresponding Author.
Department of Mathematics, National University of Singapore, Singapore 119076, Singapore (ruan@ljll.math.upmc.fr). 
Current address: Laboratoire J.-L. Lions, Universit\'{e} Pierre et 
Marie Curie, 75252 Paris cedex 05, France. 
}
\and Jie Shen\thanks{Department of Mathematics, Purdue University, West Lafayette, IN 47907-1957, USA (shen7@purdue.edu).
}
\and Changtao Sheng\thanks{Fujian Provincial Key Laboratory on Mathematical Modeling {\rm \&} High Performance Scientific Computing and School of Mathematical Sciences, Xiamen University, Xiamen, Fujian 361005, P.R. China (ctsheng8@gmail.com).}
}
\date{}
\begin{document}

\maketitle

\begin{abstract}
We study asymptotically and numerically the fundamental gap -- the difference between the first two smallest (and distinct) eigenvalues -- of the fractional Schr\"{o}dinger operator (FSO) and formulate a gap conjecture on the fundamental gap of the FSO. We begin with an introduction
of the FSO on bounded domains with homogeneous Dirichlet boundary conditions, while the fractional Laplacian operator defined either via the local fractional Laplacian (i.e. via the eigenfunctions decomposition of the Laplacian operator) or via the classical fractional Laplacian (i.e. zero extension of the eigenfunctions outside the bounded domains and then via the Fourier transform). For the FSO on bounded domains with either the local fractional Laplacian or the classical fractional Laplacian, we obtain the fundamental gap of the FSO analytically on simple geometry without potential and numerically on complicated geometries and/or with different convex potentials.
Based on the asymptotic and extensive numerical results, a gap conjecture on the fundamental gap of the FSO is formulated.
Surprisingly, for two and higher dimensions, the lower bound of the fundamental gap depends not only on the diameter of the domain, but also the diameter of the largest inscribed ball of the domain, which is completely different from the case of the Schr\"{o}dinger operator.
Extensions of these results for the FSO in the whole space and on bounded domains with periodic boundary conditions are presented.
\end{abstract}

\begin{keywords}
 Fractional Schr\"{o}dinger operator, fundamental gap, gap conjecture,
 local fractional Laplacian, classical fractional Laplacian, homogeneous Dirichlet boundary condition, periodic boundary condition.
\end{keywords}

\begin{AMS}
  26A33, 35B40, 35J10, 35P15,  35R11
\end{AMS}

\pagestyle{myheadings} \markboth{W. Bao, X. Ruan, J. Shen and C. Sheng}
{Fundamental gaps of fractional Schr\"{o}dinger operators}

\maketitle

\section{Introduction}  \setcounter{equation}{0}

Consider the {\bf fractional Schr\"{o}dinger operator} (FSO) in
$n$-dimensions ($n=1,2,3$)
\begin{equation}\label{eq:fso}
L_{\rm{FSO}}\;u(\bx):=\left[(-\Delta)^{\frac{\alpha}{2}}
+V(\bx)\right]u(\bx),\qquad \bx\in \mathbb{R}^n,
\end{equation}
where $\alpha\in(0,2]$, $V(\bx)$ 
is a given real-valued function  and the fractional Laplacian operator $(-\Delta)^{\frac{\alpha}{2}}$ is defined via the Fourier transform (see \cite{Caffarelli1,Nezza} and references therein) as
\be\label{def:FL_F}
(-\Delta)^{\frac{\alpha}{2}}u(\bx)=\mathcal{F}^{-1}
(|\bk|^{\alpha}(\FF u)(\bk)), \qquad \bx,\bk\in \mathbb{R}^n,
\ee
with $\FF$ and $\mathcal{F}^{-1}$ the Fourier transform and inverse Fourier transform, respectively.
Obviously when $\alpha=2$, \eqref{eq:fso} becomes the (classical) Schr\"{o}dinger operator. When $n=2$ and $\alpha=1$, it is related to
the square-root Laplacian operator which is used for the Coulumb interaction and dipole-dipole interaction in two dimensions (2D) \cite{BJMZ,BBC,CRLB}. In fact, the Schr\"{o}dinger equation governed by the Schr\"{o}dinger operator can be interpreted via the Feynman path integral approach over Brownian-like quantum paths \cite{Feynman1, Feynman2}.
When the method is generalized to be over the L\'{e}vy-like quantum mechanic path,  Nick Laskin derived the fractional Schr\"{o}dinger equation, where the Schr\"{o}dinger operator is replaced by the fractional one \cite{Laskin1, Laskin2,Laskin3}, i.e. $L_{\rm FSO}$.  And the new model derived lays the foundation of the fractional quantum mechanics.

It can be shown that, with definition \eqref{def:FL_F}, $\lim_{\alpha\to2^-}(-\Delta)^{\alpha/2}u=-\Delta u$ and $\lim_{\alpha\to0^+}(-\Delta)^{\alpha/2}u=u$ \cite{Nezza,Podlubny,Samko,Stinga}.
The above definition is easy to understand and useful for problems defined in the whole space. However, it is hard to get local estimates from \eqref{def:FL_F}.
  An alternative way to define $(-\Delta)^{\frac{\alpha}{2}}$ is through the principle value integral (see \cite{Caffarelli1,Caffarelli2,Caffarelli3, Servadei} and references therein) as
\be\label{def:FL_I}
(-\Delta)^{\frac{\alpha}{2}}u(\bx)=C_{n,\alpha}
\int_{\mathbb{R}^n}\frac{u(\bx)-u(\by)}{|\bx-\by|^{n+\alpha}}\,d\by,
\qquad \bx\in \mathbb{R}^n,
\ee
where $C_{n,\alpha}$ is a constant whose value  can be computed explicitly as
\be
C_{n,\alpha}=\frac{2^\alpha\Gamma(n/2+\alpha/2)}{\pi^{n/2}|\Gamma(-\alpha/2)|}=\frac{\alpha\Gamma(n/2+\alpha/2)}{2^{1-\alpha}\pi^{n/2}\Gamma(1-\alpha/2)}.
\ee
It is easy to verify that $C_{n,\alpha}\approx\frac{\alpha\Gamma(n/2)}{2\pi^{n/2}}$ as $\alpha\to0^+$ and $C_{n,\alpha}\approx\frac{n\Gamma(n/2)}{\pi^{n/2}}(2-\alpha)$ as $\alpha\to2^-$.
The definition \eqref{def:FL_I} is most useful to study local properties and it is equivalent to the  definition \eqref{def:FL_F} if $u(\bx)$ is smooth enough \cite{Caffarelli1,Nezza}.

In this paper, we are interested in the eigenvalues of the FSO, i.e.
find $E\in {\mathbb R}$ and a complex-valued function $\phi:=\phi(\bx)$ such that
\begin{equation}\label{eq:eig}
L_{\rm{FSO}}\;\phi(\bx)=
\left[(-\Delta)^{\frac{\alpha}{2}}+V(\bx)\right]\phi(\bx)=E\,\phi(\bx),\qquad \bx\in \mathbb{R}^n,
\end{equation}
especially the difference between the first two smallest eigenvalues -- the fundamental gap. For simplicity of notations and without loss of generality,
we assume that $V(\bx)$ is non-negative and is taken such that the first two
smallest eigenvalues of \eqref{eq:eig} are distinct, i.e.
the eigenvalues of \eqref{eq:eig} satisfy
$0<E_1:=E_1(\alpha)<E_2:=E_2(\alpha)<\cdots$
Assume that $\phi_1^{(\alpha)}$ and $\phi_2^{(\alpha)}$ are the corresponding eigenfunctions of $E_1$ and $E_2$, respectively, then  the first two smallest eigenvalues can be computed via
the Rayleigh quotients as
\begin{equation}
E_1(\alpha)=\min_{u\ne 0} \frac{E^{(\alpha)}(u)}{\|u\|^2},
\qquad E_2(\alpha)=\min_{u\ne 0, (u,\phi_1^{(\alpha)})=0} \frac{E^{(\alpha)}(u)}{\|u\|^2},
\end{equation}
where
\be\label{def:m_E}
\begin{split}
&\|u\|^2:=\int_{\mathbb{R}^n}\,|u(\bx)|^2\,d\bx, \quad (u,v):=\int_{\mathbb{R}^n} u(\bx)^* v(\bx)\,d\bx,\\
&E^{(\alpha)}(u):=\int_{\mathbb{R}^n}\,
\left[u(\bx)^*(-\Delta)^{\alpha/2}u(\bx)+V(\bx)|u(\bx)|^2\right]\,d\bx\\
&\qquad \ \ \quad =\int_{\mathbb{R}^n}\,|\bk|^\alpha \,|(\FF u)(\bk)|^2\,d\bk+\int_{\mathbb{R}^n}\,
V(\bx)|u(\bx)|^2\,d\bx,
\end{split}
\ee
with $f^*$ denoting the complex conjugate of $f$. Since we are mainly interested in the first two eigenvalues and their difference,
without loss generality and for simplicity of notations,
we will take $\phi_1^{(\alpha)}$ and $\phi_2^{(\alpha)}$ as real-valued functions satisfying that $\phi_1^{(\alpha)}$ is non-negative and both are
normalized to $1$, i.e.
$\|\phi_1^{(\alpha)}\|=\|\phi_2^{(\alpha)}\|=1$. Then the first two eigenvalues can also be computed as
\begin{equation}
E_1(\alpha)=E^{(\alpha)}(\phi_1^{(\alpha)})
\qquad E_2(\alpha)= E^{(\alpha)}(\phi_2^{(\alpha)}).
\end{equation}
The {\bf fundamental gap} of the FSO \eqref{eq:fso} is defined as
\be\label{gapfso}
\delta(\alpha):=E_2(\alpha)-E_1(\alpha)=
E^{(\alpha)}(\phi_2^{(\alpha)})-E^{(\alpha)}(\phi_1^{(\alpha)})>0,
\qquad 0<\alpha\le 2.
\ee

Let $\Omega\subset {\mathbb{R}^n}$ be a  bounded and open domain.
When $\alpha=2$ and $V_\bog(\bx):=V(\bx)|_\Omega \in L^2(\Omega)$ and $V(\bx)=+\infty$ for $\bx\in \Omega^c:={\mathbb{R}^n}\backslash \Omega$,
the time-independent
Schr\"{o}dinger equation \eqref{eq:eig} is reduced to 
\begin{equation}\label{eq:eigb}
\begin{split}
&\left[-\Delta+V_{_{\Omega}}(\bx)\right]\phi(\bx)=\lmd\,\phi(\bx),\qquad \bx\in \Omega,\\
&\phi(\bx)=0, \qquad \bx\in \Gamma:=\partial\Omega.
\end{split}
\end{equation}
When $V_\bog(\bx)\ge0$ for $\bx\in \Omega$, all eigenvalues of the eigenvalue
problem \eqref{eq:eigb} are distinct and positive and their corresponding
eigenfunctions are orthogonal and they form a complete basis of
$L^2(\Omega)$. In this case, based on analytical results for simple geometry
and numerical results, a {\bf Gap Conjecture} on the fundamental gap of \eqref{eq:eigb} was formulated as \cite{Ashbaugh1,Ashbaugh2,Singer}: For any convex domain $\Omega$ and convex potential $V_\bog(\bx)$, one has
\begin{equation}
\delta:=\delta(2)\ge \frac{3\pi^2}{D^2},
\end{equation}
where $D$ and $d$ are the diameter of $\Omega$ and the diameter of the largest inscribed ball of $\Omega$, respectively, defined as (see Fig. \ref{diam})
\begin{equation}
D:=\max_{\bx,\by\in \overline{\Omega}}\; |\bx-\by|,\quad
d:=\sup_{\bx\in\Omega} \sup \; \Bigl\{ r>0\ | \  B_r(\bx):=\{\by\ | \ |\bx-\by|<r\} \subset \Omega\Bigr\}.
\end{equation}
\begin{figure}[htbp]
\centerline{\psfig{figure=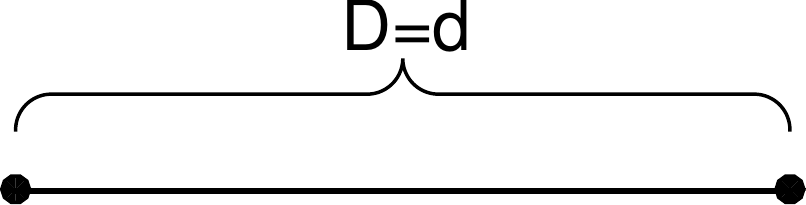,height=1cm,width=4cm,angle=0}\qquad\qquad
\psfig{figure=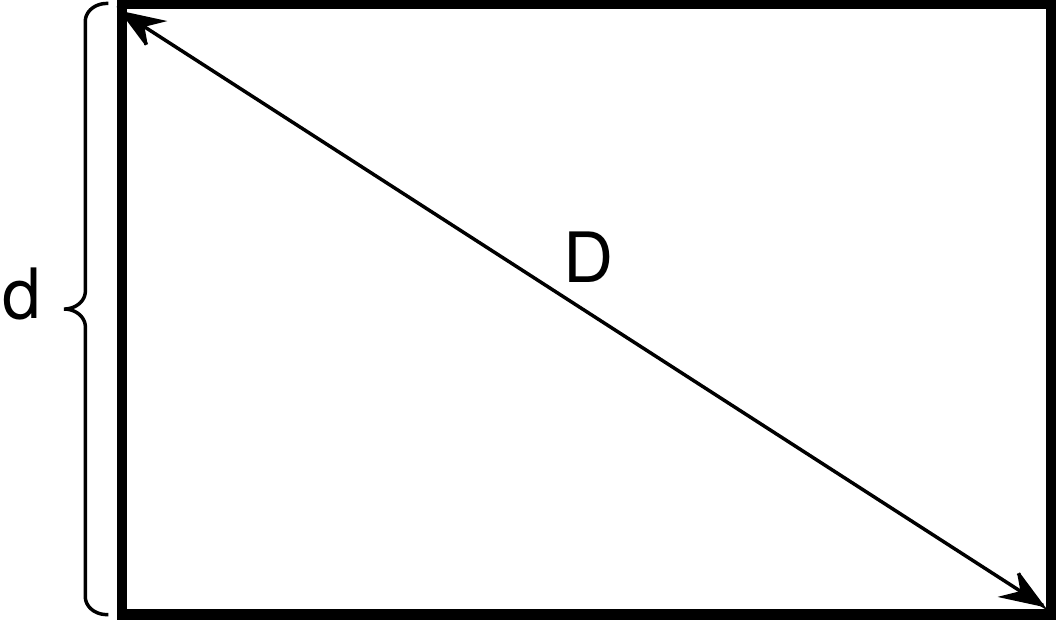,height=2cm,width=4cm,angle=0}}
\caption{Illustration of diameters $d$ and $D$ in 1D (left) and
2D (right).}
\label{diam}
\end{figure}

This gap conjecture was rigorously proved by Andrews and Clutterbuck  \cite{Andrews}. The lower bound of the fundamental gaps depends only on the diameter  of the domains and is independent of the
external potential $V(\bx)$ and the different shapes of $\Omega$.
It is noted that the gap conjecture links the algebraic property
(i.e. difference of the first two eigenvalues of the Schr\"{o}dinger operator) with the geometric property of the bounded domain $\Omega$
(i.e. its diameter). Extension of the gap conjecture
to the Schr\"{o}dinger operator in the whole space
with a harmonic-type potential, i.e. \eqref{eq:fso} with $\alpha=2$,
was also given in \cite{Andrews}. Recently,
we generalized the gap conjecture to the Gross-Pitaevskii equation (GPE)
(or the nonlinear Schr\"{o}dinger equation with cubic repulsive interaction)
\cite{BaoRuan}.

The definition \eqref{def:FL_F} (or \eqref{def:FL_I}) is usually
called as the (classical) {\sl  fractional Laplacian} (see, for instance, \cite{Caffarelli1,Caffarelli2,Caffarelli3,Nezza,Servadei} and references therein).
In the literatures \cite{Barrios,Cabre,Capella,Tan} and references therein, there is another way -- {\sl local fractional Laplacian} denoted as $A^{(\alpha/2)}$ -- to define the fractional Laplacian  via the spectral decomposition of Laplacian \cite{Barrios,Cabre,Tan}. To be more specific, for a bounded domain $\Omega\subset\mathbb{R}^n$,
let $\lmd_\bm$ and $u_\bm$ ($\bm\in\mathbb{N}^d$) be the eigenvalues and corresponding eigenfunctions of the Laplacian operator $-\Delta$ on $\Omega$ with the homogeneous Dirichlet boundary condition, i.e. \eqref{eq:eigb} with $V_\bog(\bx)\equiv 0$. Then for any $\alpha\in(0,2)$ and $\phi(\bx)\in H_0^1(\Omega)$ with
\be
\phi(\bx)=\sum_{\bm\in\mathbb{N}^d} a_{\bm}\, u_\bm(\bx), \qquad \bx\in\overline{\Omega},
\ee
we define the operator $A^{(\alpha/2)}$ in the following way
\be\label{lfl11}
A^{(\alpha/2)}\phi(\bx)=\sum_{\bm\in\mathbb{N}^d} a_{\bm} \,\lmd_\bm^{\alpha/2} \, u_\bm(\bx), \qquad \bx\in\overline{\Omega}.
\ee
Comparison between the local fractional Laplacian operator $A^{(\alpha/2)}$
via \eqref{lfl11} and the classical fractional Laplacian operator $(-\Delta)^{\frac{\alpha}{2}}$ via \eqref{def:FL_F} (or \eqref{def:FL_I}) with zero extension
on $\Omega^c$ can be found in \cite{Servadei}. When $\alpha=2$, both definitions are the same. However, when $0<\alpha<2$, they are quite different.  One main difference is that the eigenfunctions of $A^{(\alpha/2)}$ is smooth inside $\Omega$ while the eigenfunctions of $(-\Delta)^{\frac{\alpha}{2}}$ is $C^{0,s}$ for some $s\in(0,1)$. And this H\"older regularity is optimal \cite{Servadei}. Then on the  bounded domain $\Omega$, for $\phi\in H_0^1(\Omega)$, one can define the \textbf{local fractional Schr\"{o}dinger operator} (local FSO) via the local fractional Laplacian as
\begin{equation}\label{lFSO112}
L_{\rm loc} \,\phi(\bx):=\left[A^{(\alpha/2)}+V_\bog(\bx)\right]\phi(\bx),
\qquad \bx\in \Omega.
\ee
Similarly, the fundamental gap of the local FSO \eqref{lFSO112}
is denoted as
\be\label{gaplfso}
\delta_{\rm loc}(\alpha):=\lambda_2(\alpha)-\lambda_1(\alpha)>0,
\qquad 0<\alpha\le 2.
\ee
where $0<\lambda_1(\alpha)<\lambda_2(\alpha)$ are the first two smallest
eigenvales of the local FSO \eqref{lFSO112}.

Due to the nonlocal property of the FSO, it is very challenging to study mathematically and numerically the eigenvalue
problem \eqref{eq:eig} \cite{Jeng}. In one dimension (1D),
some estimates and asymptotic approximations of eigenvalues of the FSO without potential (i.e. $V(\bx)\equiv 0$) have been derived (see \cite{Banuelos, Chen, Deblassie,Kwasnicki} and references therein).  It is noteworthy that Duo and Zhang \cite{Duo} introduced a finite difference scheme to solve the eigenvalue problems related to FSO in 1D. Nevertheless, to the best of our knowledge, not much is available about the numerical method for \eqref{eq:eig} in multi-dimensions.
The main purpose of this paper is to study
asymptotically and numerically the fundamental gap $\delta(\alpha)$
of the FSO \eqref{eq:eig} on bounded domains $\Omega$, i.e. the potential
$V(\bx)=+\infty$ for $\bx\in \Omega^c$, and $\delta_{\rm loc}(\alpha)$ of the local FSO \eqref{lFSO112}.  Based on our asymptotic results and extensive numerical results, we propose the following:

\textbf{Gap Conjecture I} {\em (Fundamental gaps of FSO on bounded domain with homogeneous Dirichlet boundary conditions) Suppose $\Omega$ is a bounded convex domain and  $V_\Omega(\mathbf{x})$ is convex and non-negative.

(i) For the fundamental gap of the local FSO \eqref{lFSO112},
 we have
\begin{align}\label{conj1}
\delta_{\rm loc}(\alpha)
&\ge \left\{\begin{array}{ll}
\frac{(2^{\alpha}-1)\pi^{\alpha}}{D^{\alpha}},\quad &n=1,\\
\frac{\alpha\pi^{\alpha}}{(n+2)^{1-\alpha/2}}\frac{d^{2-\alpha}}{D^2},\quad &n\ge2,\\
\end{array}\right. \qquad 0<\alpha\le 2.
\end{align}

(ii) For the fundamental gap of the  (classical) FSO \eqref{eq:fso}, we have
\begin{equation}\label{conj2}
\delta(\alpha)\ge\frac{\alpha\pi^{\alpha}}{(n+2)^{1-\alpha/2}}
\frac{d^{2-\alpha}}{D^2}, \qquad 0<\alpha\le 2.
\end{equation}
}
\medskip

In addition, we also propose a gap conjecture for  the FSO \eqref{eq:fso} in the whole space.

The paper is organised as follows. In Section \ref{sec:loc}, we study
asymptotically and numerically the fundamental gaps of the local FSO
\eqref{lFSO112} and formulate the gap conjecture \eqref{conj1}.
Similar results for the (classical) FSO \eqref{eq:fso} on bounded domains
with homogeneous Dirichlet boundary conditions are presented
in Section \ref{sec:loc_classic}.
In Section \ref{sec:global_classic}, we study
asymptotically and numerically the fundamental gaps of the FSO
\eqref{eq:fso} in the whole space and formulate a gap conjecture.
Again, similar results for the FSO \eqref{eq:fso} on bounded domains
with periodic boundary conditions are presented
in Section \ref{sec:periodic}.
Finally, some conclusions are drawn in Section \ref{sec:conclusion}.

\section{The fundamental gaps of the local FSO \eqref{lFSO112}} \label{sec:loc}
Consider the eigenvalue problem generated by the local FSO \eqref{lFSO112}
\begin{equation}\label{eq:eigl}
\begin{split}
&L_{\rm loc}\,\phi(\bx):=\left[A^{(\alpha/2)}+V_\bog(\bx)\right]\phi(\bx)=
\lambda\,\phi(\bx),\qquad \bx\in \Omega,\\
&\phi(\bx)=0, \qquad \bx\in \Gamma:=\partial\Omega.
\end{split}
\end{equation}
We will investigate asymptotically and numerically the first two smallest eigenvalues and their corresponding eigenfunctions of \eqref{eq:eigl}
and then formulate the gap conjecture \eqref{conj1}.

\subsection{Scaling property}
Introduce
\begin{equation}\label{scaxp}
\tbx=\frac{\bx}{D}, \ \tilde\Omega =\{\tbx \ | \ \bx=D\,\tbx\in \Omega\},
\ \tilde{V}_\botg(\tbx) =D^\alpha V_\bog(\bx)=D^\alpha V_\bog(D\tbx), \ \bx\in\Omega,
\end{equation}
and consider the re-scaled eigenvalue problem
\begin{equation}\label{eq:eiglr}
\begin{split}
&\tilde L_{\rm loc}\,\tilde\phi(\tbx):=\left[\tilde A^{(\alpha/2)}+\tilde {V}_\botg(\tbx)\right]\tilde\phi(\tbx)=
\tilde\lambda\,\tilde\phi(\tbx),\qquad \tbx\in \tilde\Omega,\\
&\tilde\phi(\tbx)=0, \qquad \tbx\in \tilde\Gamma:=\partial\tilde\Omega,
\end{split}
\end{equation}
where $\tilde A^{(\alpha/2)}$ is defined as \eqref{lfl11} with $\Omega$ replaced by $\tilde \Omega$, then we have

\begin{lemma}\label{lem:scale_loc}
Let $\lambda$ be an eigenvalue of \eqref{eq:eigl} and $\phi:=\phi(\bx)$ is the corresponding eigenfunction, then $\tilde \lambda =D^\alpha \lambda$
is an eigenvalue of \eqref{eq:eiglr} and  $\tilde \phi:=\tilde \phi(\tbx) = \phi(D\tbx)=\phi(\bx)$ is the corresponding eigenfunction, which immediately imply the scaling property on the fundamental gap $\delta_{\rm loc}(\alpha)$  of \eqref{eq:eigl} as
\be
\delta_{\rm loc}(\alpha) = \frac{\tdelta_{\rm loc}(\alpha)}{D^{\alpha}}, \qquad
0<\alpha\le 2,
\ee
where $\tdelta_{\rm loc}(\alpha)$ is the fundamental gap of
\eqref{eq:eiglr} with the diameter of $\tilde \Omega$ as $1$.
\end{lemma}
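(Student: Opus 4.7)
The plan is to reduce everything to the standard diffusive scaling of the Dirichlet Laplacian, then push it through the spectral definition \eqref{lfl11} of $A^{(\alpha/2)}$ and through the potential term, and finally read off the gap identity by a bijection of spectra.

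First I would establish the scaling of the Dirichlet Laplacian. If $(\lmd_\bm, u_\bm)$ are the eigenpairs of $-\Delta$ on $\Omega$ with homogeneous Dirichlet boundary conditions, then a direct chain-rule computation shows that $\tu_\bm(\tbx) := u_\bm(D\tbx)$ satisfies $-\Delta_{\tbx}\tu_\bm = D^2 \lmd_\bm\,\tu_\bm$ on $\tOmega$ with $\tu_\bm = 0$ on $\partial\tOmega$, i.e.\ the Dirichlet eigenpairs of $-\Delta$ on $\tOmega$ are exactly $(D^2\lmd_\bm,\tu_\bm)$ (up to a constant $L^2$-normalization that does not affect the spectrum). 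This gives a canonical bijection between the two sets of Dirichlet eigenpairs.

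Next I would translate this to $A^{(\alpha/2)}$. Given any $\phi(\bx)=\sum_\bm a_\bm u_\bm(\bx)\in H^1_0(\Omega)$, the rescaled function $\tphi(\tbx):=\phi(D\tbx)$ has the expansion $\tphi(\tbx)=\sum_\bm a_\bm \tu_\bm(\tbx)$ in the eigenbasis on $\tOmega$. Applying the definition \eqref{lfl11} on $\tOmega$ then yields
\begin{equation*}
\tA^{(\alpha/2)}\tphi(\tbx)=\sum_\bm a_\bm (D^2\lmd_\bm)^{\alpha/2}\tu_\bm(\tbx)
= D^\alpha \sum_\bm a_\bm \lmd_\bm^{\alpha/2} u_\bm(D\tbx)
= D^\alpha \bigl(A^{(\alpha/2)}\phi\bigr)(D\tbx).
\end{equation*}
Combined with the definition $\tV_\botg(\tbx)=D^\alpha V_\bog(D\tbx)$ from \eqref{scaxp}, this gives the pointwise identity $\tilde L_{\rm loc}\tphi(\tbx)=D^\alpha L_{\rm loc}\phi(D\tbx)$. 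Hence if $L_{\rm loc}\phi=\lmd\phi$ on $\Omega$, then $\tilde L_{\rm loc}\tphi=D^\alpha\lmd\,\tphi=\tilde\lmd\,\tphi$ on $\tOmega$, and the boundary condition $\tphi|_{\partial\tOmega}=0$ follows from $\phi|_{\partial\Omega}=0$ since $\bx\in\partial\Omega\iff\tbx\in\partial\tOmega$.

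Finally, for the gap identity I would note that the transformation $\phi\mapsto\tphi$ is invertible (its inverse is $\tphi\mapsto\tphi(\bx/D)$), so it gives a one-to-one correspondence between eigenpairs of \eqref{eq:eigl} and \eqref{eq:eiglr} that multiplies eigenvalues by $D^\alpha$. Because multiplication by $D^\alpha>0$ is order-preserving, the ordered eigenvalues satisfy $\tilde\lmd_k=D^\alpha \lmd_k$ for all $k\ge 1$, and in particular $\tdelta_{\rm loc}(\alpha)=\tilde\lmd_2-\tilde\lmd_1=D^\alpha(\lmd_2-\lmd_1)=D^\alpha\delta_{\rm loc}(\alpha)$, which is the claimed identity. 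I do not expect any real obstacle here: the only mildly delicate point is making sure the same expansion coefficients $a_\bm$ represent $\phi$ on $\Omega$ and $\tphi$ on $\tOmega$ in their respective Dirichlet eigenbases, but this is immediate since $\{\tu_\bm\}$ is obtained from $\{u_\bm\}$ by the change of variables $\bx=D\tbx$ and the spectral definition \eqref{lfl11} is insensitive to the chosen $L^2$-normalization.
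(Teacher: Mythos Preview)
Your proposal is correct and follows essentially the same route as the paper: scale the Dirichlet eigenpairs of $-\Delta$, push the scaling through the spectral definition \eqref{lfl11} to get $\tA^{(\alpha/2)}\tphi = D^\alpha A^{(\alpha/2)}\phi$, then add the potential term and read off $\tilde\lmd=D^\alpha\lmd$. Your explicit bijection-of-spectra remark to justify $\tdelta_{\rm loc}=D^\alpha\delta_{\rm loc}$ is a small clarification the paper leaves implicit, but otherwise the arguments coincide.
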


\begin{proof}
Assume $\lambda_\bm$ be an eigenvalue of \eqref{eq:eigb} with
$V_\Omega(\bx)\equiv 0$ and $u_\bm(\bx)$ be the corresponding
eigenfunction, i.e. $u_\bm(\bx)\in H_0^1(\Omega)$ satisfies
\be
-\Delta u_\bm(\bx)=\lmd_\bm u_\bm(\bx),  \quad \bx\in\Omega.
\ee
It is easy to see that
\be
-\Delta \tu_\bm(\tbx)=D^{2}\lmd_\bm \tu_\bm(\tbx)=\tilde\lmd_\bm \tu_\bm(\tbx), \quad \tbx\in\tOmega,
\ee
where $\tilde\lmd_\bm=D^{2}\lmd_\bm$. Then
for any $\tilde \phi(\tbx)\in H_0^1(\tilde \Omega)$, recalling the definition of the local fractional Laplacian \eqref{lfl11}, we get
\bea\label{Aaph11}
A^{(\alpha/2)}\phi(\bx)&=&\sum_{\bm\in\mathbb{N}^d} a_{\bm} \,\lmd_\bm^{\alpha/2} \, u_\bm(\bx)=D^{-\alpha} \sum_{\bm\in\mathbb{N}^d} a_{\bm} \,(D^2\lmd_\bm)^{\alpha/2} \,  u_\bm(\bx)\nonumber\\
&=& D^{-\alpha} \sum_{\bm\in\mathbb{N}^d} a_{\bm} \,(\tilde\lmd_\bm)^{\alpha/2} \, \tilde u_\bm(\tbx)=
D^{-\alpha} \tilde A^{(\alpha/2)}\tilde \phi(\tbx), \qquad \bx\in \Omega.
\eea
Plugging \eqref{Aaph11} into \eqref{eq:eigl}, noticing
\eqref{eq:eiglr}, we get
\bea
\lambda\,\tphi(\tbx)&=&\lambda\,\phi(\bx)=\left[A^{(\alpha/2)}+V_\bog(\bx)\right]\phi(\bx)=\left[D^{-\alpha} \tilde A^{(\alpha/2)}+V_\bog(D\tbx)\right]\tphi(\tbx)\nonumber\\
&=&D^{-\alpha}
\left[ \tilde A^{(\alpha/2)}+D^\alpha V_\bog(D\tbx)\right]
\tphi(\tbx)=D^{-\alpha}\left[ \tilde A^{(\alpha/2)}+\tilde V_\botg(\tbx)\right]
\tilde \phi(\tbx),
\eea
where $\bx\in \Omega$ and $\tbx\in \tilde \Omega$,
which immediately implies that $\tilde \phi(\tbx)$ is an eigenfunction of the operator $\tilde A^{(\alpha/2)}+\tilde V_\botg(\tbx)$ with the eigenvalue $\tilde \lambda=D^\alpha \lambda$.
\end{proof}

  From this scaling property, in our asymptotic analysis and numerical
simulation, we need only consider $\Omega$ whose diameter is $1$ in \eqref{eq:eigl}.

\subsection{Asymptotic results for simple geometry}
Take $\Omega=\prod_{j=1}^n(0,L_j)$ and $V_\bog(\bx)\equiv 0$ in \eqref{eq:eigl}.
Without loss of generality, we assume $L_1\ge L_2 \ge \ldots \ge L_n>0$.
In this case, the first two smallest eigenvalues and their corresponding
eigenfunctions of $-\Delta$ can be chosen explicitly as \cite{BC,BL}
\be\begin{split}
&\lambda_1=\sum_{j=1}^n\frac{\pi^2}{L_j^2}, \qquad  \phi_1(\bx)=2^{n/2}\prod_{j=1}^n \sin\left(\frac{\pi x_j}{L_j}\right),\qquad \bx\in \overline\Omega, \\
&\lambda_2=\frac{4\pi^2}{L_1^2}+
\sum_{j=2}^n\frac{\pi^2}{L_j^2}, \qquad  \phi_2(\bx)=2^{n/2}\sin\left(\frac{2\pi x_j}{L_1}\right)\prod_{j=2}^n \sin\left(\frac{\pi x_j}{L_j}\right).\\
\end{split}
\ee
By using the definition of the local FSO, we can obtain
the first two smallest eigenvalues and the fundamental gap in this case as
\be\label{lmd123}
\begin{split}
&\lambda_1(\alpha)=\left(\sum_{j=1}^n\frac{\pi^2}{L_j^2}\right)^{\alpha/2},
\qquad \lambda_2(\alpha)=\left(\frac{4\pi^2}{L_1^2}+
\sum_{j=2}^n\frac{\pi^2}{L_j^2}\right)^{\alpha/2}, \\
&\delta_{\rm loc}(\alpha)=\lambda_2(\alpha)-\lambda_1(\alpha),
\qquad 0<\alpha\le 2.
\end{split}
\ee

Formally, when $n\ge2$, let $L_2,\dots, L_n\to0^+$ in \eqref{lmd123}, we have the diameter $D\to L_1$ and $\nu:=\left(\sum_{j=2}^n\frac{\pi^2}{L_j^2}\right)^{1/2} \to+\infty$. When $\alpha=2$,
\be\label{gap2loc}
\delta_{\rm loc}(2)=\frac{3\pi^2}{L_1^2}=\frac{3\pi^2}{D^2}>0,
\ee
i.e. the fundamental gap is independent of the shape of the geometry and it
only depends the diameter $D$ of $\Omega$.
On the contrary, when $0<\alpha<2$,
\bea\label{gapl2loc}
\delta_{\rm loc}(\alpha)&=&\left(\nu^2+\frac{4\pi^2}{L_1^2}\right)^{\alpha/2}
-\left(\nu^2+\frac{\pi^2}{L_1^2}\right)^{\alpha/2}\nonumber\\
&=&\nu^{\alpha}\left[
\left(1+\frac{4\pi^2}{\nu^2 L_1^2}\right)^{\alpha/2}
-\left(1+\frac{\pi^2}{\nu^2 L_1^2}\right)^{\alpha/2}\right]\nonumber\\
&=&\frac{\alpha\nu^{\alpha}}{2} \; \frac{1}{(1+\xi)^{1-\alpha/2}}\;\frac{3\pi^2}{\nu^2 L_1^2}\nonumber\\
&\le&\frac{3\alpha \pi^2}{L_1^2 \,\nu^{2-\alpha}}\to 0^+, \qquad 0<\alpha<2,
\eea
where $0<\xi \in [\pi^2/(\nu^2 L_1^2), 3\pi^2/(\nu^2 L_1^2)]$.
In this case, the lower bound of the fundamental gap depends not only
on the diameter $D$ of $\Omega$ but also another geometry quantity.
By looking carefully  at \eqref{gapl2loc}, we find that
the diameter of the largest inscribed ball of $\Omega$, i.e. $d$,
seems to be a good choice since its ratio with the diameter $D$ can be used to measure whether the domain degenerates from $n$ dimensions to lower dimensions. Based on these observation, we have the
 following lemma.

\smallskip

\begin{lemma}\label{lem:gap_box}
For $\Omega=\prod_{j=1}^n(0,L_j)$ satisfying
$L_1\ge L_2 \ge \ldots \ge L_n>0$
and $V_\bog(\bx)\equiv 0$ in \eqref{eq:eigl},
we have the following lower bound of the fundamental gaps
of the local FSO in \eqref{eq:eigl}
\be\label{gaplc234}
\delta_{\rm loc}(\alpha)\ge\left\{
\begin{array}{ll}
\frac{(2^{\alpha}-1)\pi^{\alpha}}{D^{\alpha}},\quad &n=1,\\
\frac{\alpha\pi^{\alpha}}{(n+2)^{1-\alpha/2}}\frac{d^{2-\alpha}}{D^2},\quad &n\ge2,\\
\end{array}\right. \qquad 0<\alpha\le 2,
\ee
where $D=\sqrt{\sum_{j=1}^nL_j^2}$ is the diameter of $\Omega$ and $d=L_n$ is the diameter of the largest inscribed ball in $\Omega$.
\end{lemma}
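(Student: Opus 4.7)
The plan is to work directly from the closed-form expressions \eqref{lmd123} and to split into the cases $n=1$ and $n\ge 2$. For $n=1$ there is nothing to do beyond arithmetic: $D=L_1$, and
\[
\delta_{\rm loc}(\alpha)=\bigl(4\pi^2/L_1^2\bigr)^{\alpha/2}-\bigl(\pi^2/L_1^2\bigr)^{\alpha/2}=\frac{(2^\alpha-1)\pi^\alpha}{D^\alpha},
\]
which matches the claimed bound with equality.

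For $n\ge 2$, I would set $a:=\sum_{j=1}^n \pi^2/L_j^2$ and $b:=4\pi^2/L_1^2+\sum_{j=2}^n \pi^2/L_j^2$, so that $\delta_{\rm loc}(\alpha)=b^{\alpha/2}-a^{\alpha/2}$ and $b-a=3\pi^2/L_1^2$. Applying the mean value theorem to $f(t)=t^{\alpha/2}$ on $[a,b]$, and using the fact that $f'(t)=\tfrac{\alpha}{2}t^{\alpha/2-1}$ is non-increasing in $t$ whenever $\alpha\le 2$, one obtains
\[
\delta_{\rm loc}(\alpha)\ge \tfrac{\alpha}{2}\,b^{\alpha/2-1}(b-a).
\]
To reshape this into the stated form, I would bound $b$ from above by using $L_j\ge L_n=d$ for every $j$ (so each of the $n+3$ ``units'' of $\pi^2/L_j^2$ is at most $\pi^2/d^2$), giving $b\le (n+3)\pi^2/d^2$, and separately use $L_1\le D$. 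Substituting both estimates yields
\[
\delta_{\rm loc}(\alpha)\ge \frac{3\alpha\,\pi^\alpha\,d^{2-\alpha}}{2(n+3)^{1-\alpha/2}\,D^2}.
\]

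What remains is the elementary numerical inequality $\tfrac{3}{2(n+3)^{1-\alpha/2}}\ge \tfrac{1}{(n+2)^{1-\alpha/2}}$ for $n\ge 2$ and $\alpha\in(0,2]$, equivalently $\bigl((n+2)/(n+3)\bigr)^{1-\alpha/2}\ge 2/3$. Since $(n+2)/(n+3)\ge 4/5$ for $n\ge 2$ and the exponent $1-\alpha/2$ lies in $[0,1]$, this follows from $(4/5)^{1-\alpha/2}\ge 4/5>2/3$. I do not expect a serious obstacle here since the eigenvalues are known in closed form; the only mildly delicate point is selecting a bound on $b$ that is simultaneously clean and tight enough for the resulting constant factor to absorb into $(n+2)^{1-\alpha/2}$. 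An alternative would be to retain the MVT midpoint $\xi$ and optimize, but the crude choice $\xi=b$ already produces the correct dependence on $d$ and $D$, reducing the whole argument to the numerical comparison above.
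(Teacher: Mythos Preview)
Your argument is correct, and it proceeds by a somewhat different route than the paper. The paper first reduces the expression \eqref{proof:box1} to the cleaner form
\[
\left(\frac{4\pi^2}{D^2}+\frac{(n-1)\pi^2}{d^2}\right)^{\alpha/2}-\left(\frac{\pi^2}{D^2}+\frac{(n-1)\pi^2}{d^2}\right)^{\alpha/2},
\]
by introducing two auxiliary monotone functions and replacing the $L_j$ one at a time by $d$ or $D$; this step is carried out separately for $n=2$ and $n=3$. Only after that does the paper apply the mean value theorem and bound the derivative at the right endpoint. You instead apply the mean value theorem immediately to $b^{\alpha/2}-a^{\alpha/2}$, then bound $b\le(n+3)\pi^2/d^2$ and $b-a\ge 3\pi^2/D^2$ in one stroke. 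Your route is shorter and works uniformly for every $n\ge 2$, whereas the paper's intermediate inequality \eqref{proof:box} is sharper (it yields the constant $(n+2)^{1-\alpha/2}$ directly, without the extra numerical comparison $3/2\ge ((n+3)/(n+2))^{1-\alpha/2}$ that you need at the end). Both arguments ultimately hinge on the same concavity of $t\mapsto t^{\alpha/2}$; the only real difference is where in the chain the crude bounds on the $L_j$ are inserted.
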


\begin{proof} When $n=1$, noticing $D=L_1$ and \eqref{lmd123} with $n=1$, we have
\bea
\delta_{\rm loc}(\alpha)&=&\lambda_2(\alpha)-\lambda_1(\alpha)
=\left(\frac{4\pi^2}{L_1^2}\right)^{\alpha/2}-
\left(\frac{\pi^2}{L_1^2}\right)^{\alpha/2}\nonumber\\
&=&\frac{(2^{\alpha}-1)
\pi^{\alpha}}{L_1^{\alpha}}=\frac{(2^{\alpha}-1)\pi^{\alpha}}{D^{\alpha}},
\qquad 0<\alpha\le 2.
\eea
Thus \eqref{gaplc234} is proved when $n=1$.

When $n\ge2$, noticing \eqref{lmd123},  we have
\be\label{proof:box1}
\delta_{\rm loc}(\alpha)=\left(\frac{4\pi^2}{L_1^2}+
\sum_{j=2}^n\frac{\pi^2}{L_j^2}\right)^{\alpha/2}-
\left(\frac{\pi^2}{L_1^2}+\sum_{j=2}^n\frac{\pi^2}{L_j^2}\right)^{\alpha/2},\qquad 0<\alpha\le 2.
\ee
We will first prove that
\be\label{proof:box}
\delta_{\rm loc}(\alpha)\ge
\left(\frac{4\pi^2}{D^2}+\frac{(n-1)\pi^2}{d^2}\right)^{\alpha/2}
-\left(\frac{\pi^2}{D^2}+\frac{(n-1)\pi^2}{d^2}\right)^{\alpha/2},\qquad 0<\alpha\le 2.
\ee
In order to do so, we consider two functions
\be\label{app:func}
\begin{split}
&f(x;C)=\left(\frac{4\pi^2}{x^2}+C^2\right)^{\alpha/2}-
\left(\frac{\pi^2}{x^2}+C^2\right)^{\alpha/2},
\qquad x>0, \\
&g(x;A,B)=\left(x+A+B\right)^{\alpha/2}-\left(x+A\right)^{\alpha/2},\\
\end{split}
\ee
where $0<\alpha\le 2$, $C\in\mathbb{R}$ and $A,B\ge0$. A direct computation shows that $\frac{d}{dx}f(x;C)\le0$ and $\frac{d}{dx}g(x;A,B)\le0$ for $x>0$, which means that $f(x;C)$ and $g(x;A,B)$ are monotonically decreasing functions.
When $n=2$, it is easy to check that $d=L_2$ and $D\ge L_1$. Noticing
$f(D;\pi/d)\le f(L_1;\pi/d)$, we immediately obtain \eqref{proof:box} when $n=2$.
When $n=3$, noting $d=L_3\le L_2\le L_1$ and $D\ge L_1\ge L_2\ge L_3$, we get
\bea
\delta_{\rm loc}(\alpha)&=&\left(\frac{\pi^2}{L_2^2}+\frac{\pi^2}{L_1^2}
+\frac{\pi^2}{L_3^2}+\frac{3\pi^2}{L_1^2}\right)^{\alpha/2}-
\left(\frac{\pi^2}{L_2^2}+\frac{\pi^2}{L_1^2}+\frac{\pi^2}{L_3^2}
\right)^{\alpha/2}\nonumber\\
&=&g\left(\frac{\pi^2}{L_2^2};\frac{\pi^2}{L_1^2}+
\frac{\pi^2}{L_3^2},\frac{3\pi^2}{L_1^2}\right)
\ge g\left(\frac{\pi^2}{d^2};\frac{\pi^2}{L_1^2}+\frac{\pi^2}{L_3^2},
\frac{3\pi^2}{L_1^2}\right)\nonumber\\
&=&\left(\frac{4\pi^2}{L_1^2}+\frac{2\pi^2}{d^2}\right)^{\alpha/2}
-\left(\frac{\pi^2}{L_1^2}+\frac{2\pi^2}{d^2}\right)^{\alpha/2}
=f\left(L_1;\frac{\sqrt{2}\pi}{d}\right)
\ge f\left(D;\frac{\sqrt{2}\pi}{d}\right)\nonumber\\
&=&\left(\frac{4\pi^2}{D^2}+\frac{2\pi^2}{d^2}\right)^{\alpha/2}
-\left(\frac{\pi^2}{D^2}+\frac{2\pi^2}{d^2}\right)^{\alpha/2},\qquad
0<\alpha\le 2,
\eea
which proves \eqref{proof:box} when $n=3$.

 When $n\ge2$, noting \eqref{proof:box}, we get
\bea\label{mean345}
\delta_{\rm loc}(\alpha)&\ge&\left(\frac{4\pi^2}{D^2}+\frac{(n-1)
\pi^2}{d^2}\right)^{\alpha/2}-\left(\frac{\pi^2}{D^2}+
\frac{(n-1)\pi^2}{d^2}\right)^{\alpha/2}\nonumber\\
&=&\left(\frac{(n-1)\pi^2}{d^2}\right)^{\alpha/2}
\left[\left(1+\frac{3}{n-1}\left(\frac{d}{D}\right)^2\right)^{\alpha/2}
-\left(1+\frac{1}{n-1}\left(\frac{d}{D}\right)^2\right)^{\alpha/2}
\right]\nonumber\\
&=&\left(\frac{(n-1)\pi^2}{d^2}\right)^{\alpha/2}\;\frac{\alpha}
{n-1}\;\frac{1}{(1+\xi)^{1-\alpha/2}}\;\left(\frac{d}{D}\right)^2, \qquad 0<\alpha\le 2,
\eea
where the last equation is due to the mean value theorem with
$\xi\in[\frac{1}{n-1}\left(\frac{d}{D}\right)^2,\frac{3}{n-1}
\left(\frac{d}{D}\right)^2]\subset[0,\frac{3}{n-1}]$.
Noting that
$\frac{1}{(1+\xi)^{1-\alpha/2}}$ is a decreasing function when
$\xi\ge0$ and taking $\xi=\frac{3}{n-1}$ in \eqref{mean345}, we obtain the result \eqref{lmd123}  when $n\ge2$ by
\eqref{proof:box} and \eqref{mean345}.
\end{proof}

\smallskip

\begin{remark}
When $n=1$, noting $d=D=L_1$, we have
\be\label{gaploc456}
\delta_{\rm loc}(\alpha)\ge \frac{(2^{\alpha}-1)\pi^{\alpha}}{D^{\alpha}}\ge \frac{\alpha\pi^{\alpha}}{3^{1-\alpha/2}}\frac{d^{2-\alpha}}{D^2}, \qquad
0<\alpha\le 2.
\ee
Combing \eqref{gaploc456} and \eqref{gaplc234} with $n\ge2$,
we have a unified local bound of the fundamental gap of the local
FSO as
\be\label{lgap765}
\delta_{\rm loc}(\alpha)\ge
\frac{\alpha\pi^{\alpha}}{(n+2)^{1-\alpha/2}}\frac{d^{2-\alpha}}{D^2},
\qquad 0<\alpha\le 2, \quad 1\le n\le 3.
\ee
Of course, the lower bound is not sharp when $n=1$.
\end{remark}


\subsection{Numerical results for complicated geometry and/or general potentials}
When $\Omega$ is  a complicated domain and/or $V_\bog(\bx)\ne 0$ in
\eqref{eq:eigl}, it is not  generally possible to  find the first two smallest eigenvalues explicitly. However, we can always compute numerically
the first two smallest eigenvalues and their gap of  \eqref{eq:eigl} under a given bounded convex domain $\Omega$ and
a convex real-valued function $V_\bog(\bx)$.
Some numerical methods for local fractional Laplacian have been proposed in the literatures, e.g.,  a matrix representation of local fractional Laplacian operator based on a finite difference method is presented in \cite{ilic05,ilic06}; Fourier spectral methods for  solving local fractional Laplacian  can be found in e.g., \cite{bueno14,Ainsw17}. Recently, Sheng et al. \cite{Sheng18} proposed a Fourierization of Legendre-Galerkin method for PDEs with local fractional Laplacian. The method retains the simplicity of Fourier method but is applicable to problems with non-periodic boundary conditions.   In this paper, we adopt this method  to numerically compute  the first two smallest eigenvalues of \eqref{eq:eigl}.

Fig. \ref{fig:box_1d_gap_loc} shows the numerical results
on the fundamental gap $\delta_{\rm loc}(\alpha)$ of \eqref{eq:eigl}
when $n=1$, $\Omega=(0,1)$  and different external potentials $V_\bog(x)$.
Fig. \ref{fig:box_2d_gap_loc}  shows similar results
when $n=2$, $V_\bog(x,y)=(x^2+y^2)/2$ and  $\Omega=(0,d)\times(0,\sqrt{1-d^2})$
with different $0<d< 1$;
and  $V_\bog(x,y)\equiv 0$ and $\Omega=\{(x,y)\ | \ x^2+y^2/d^2 <1\}$ with
different $0<d\le 1$.

\begin{figure}[htbp]
\centerline{\psfig{figure=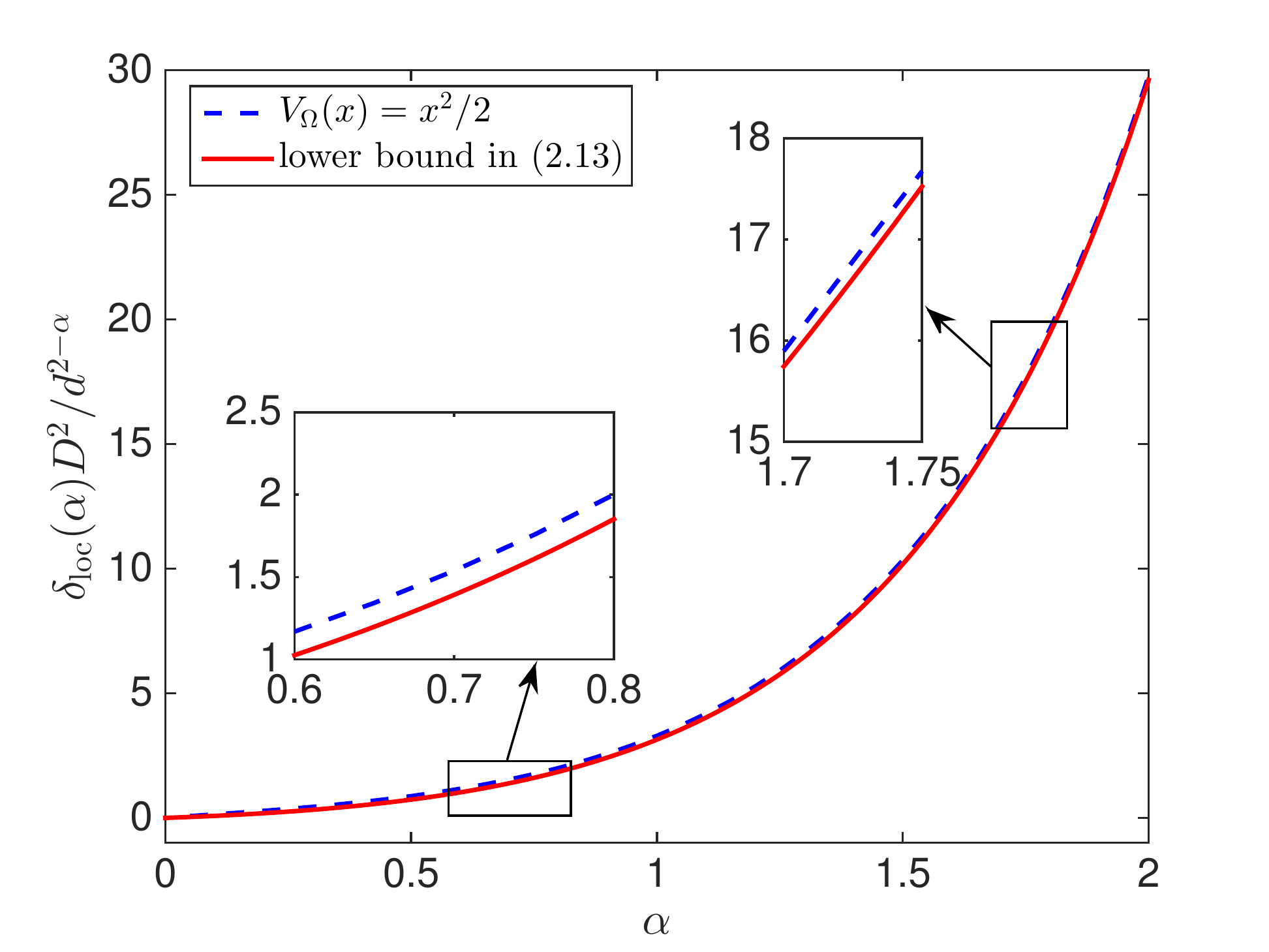,height=6.5cm,width=7cm,angle=0}
\psfig{figure=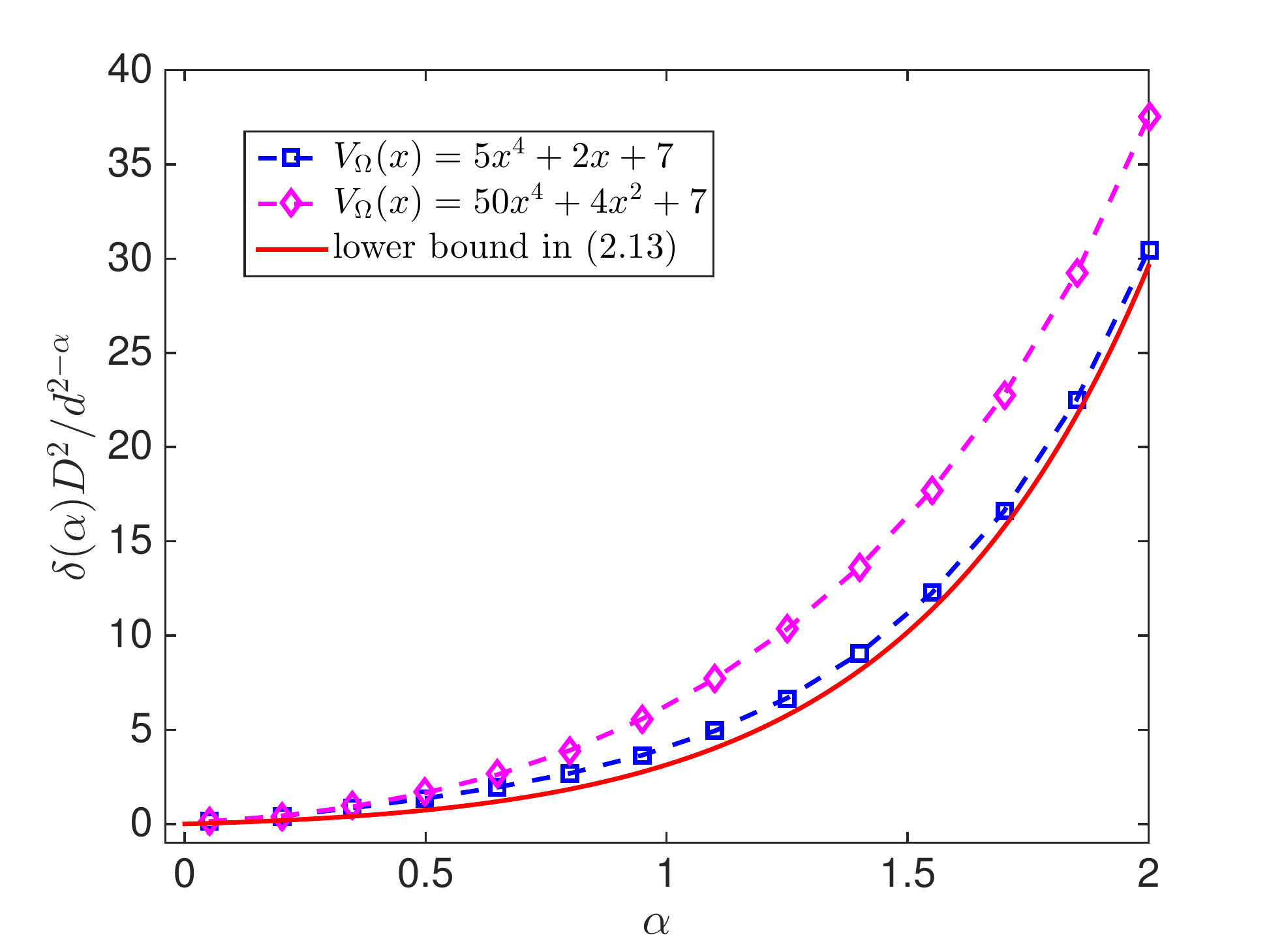,height=6.5cm,width=7cm,angle=0}}
\caption{Comparison of the lower bound in \eqref{gaplc234} (solid line)
and numerical results (dash lines) for the fundamental gap $\delta_{\rm loc}(\alpha)$ of the  local FSO \eqref{eq:eigl} with $n=1$, $\Omega=(0,1)$ and $V_\bog(x)=x^2/2$ (left) or other different convex potentials (right).}
\label{fig:box_1d_gap_loc}
\end{figure}

\begin{figure}[htbp]
\centerline{\psfig{figure=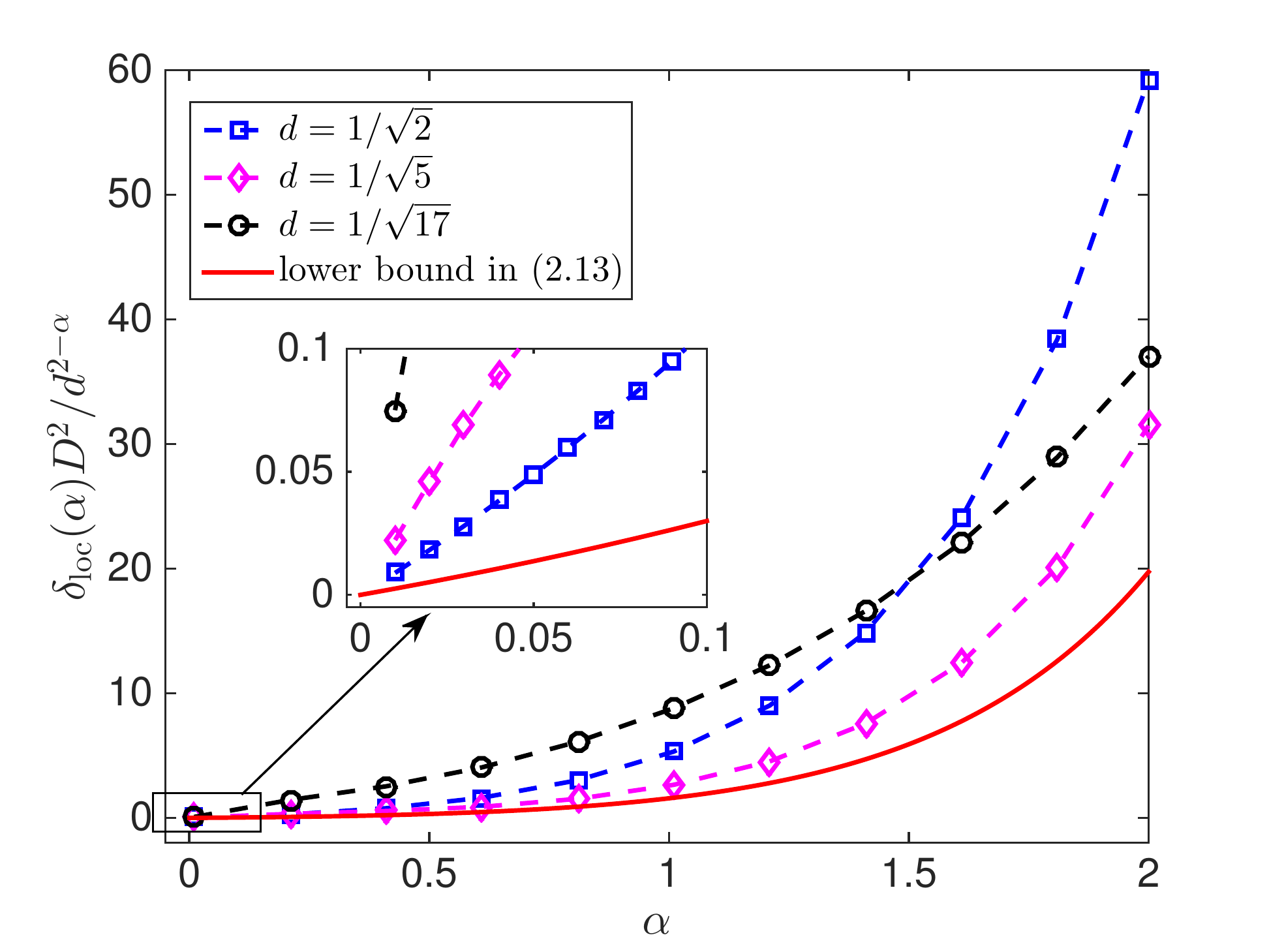,height=6.5cm,width=7cm,angle=0}
\psfig{figure=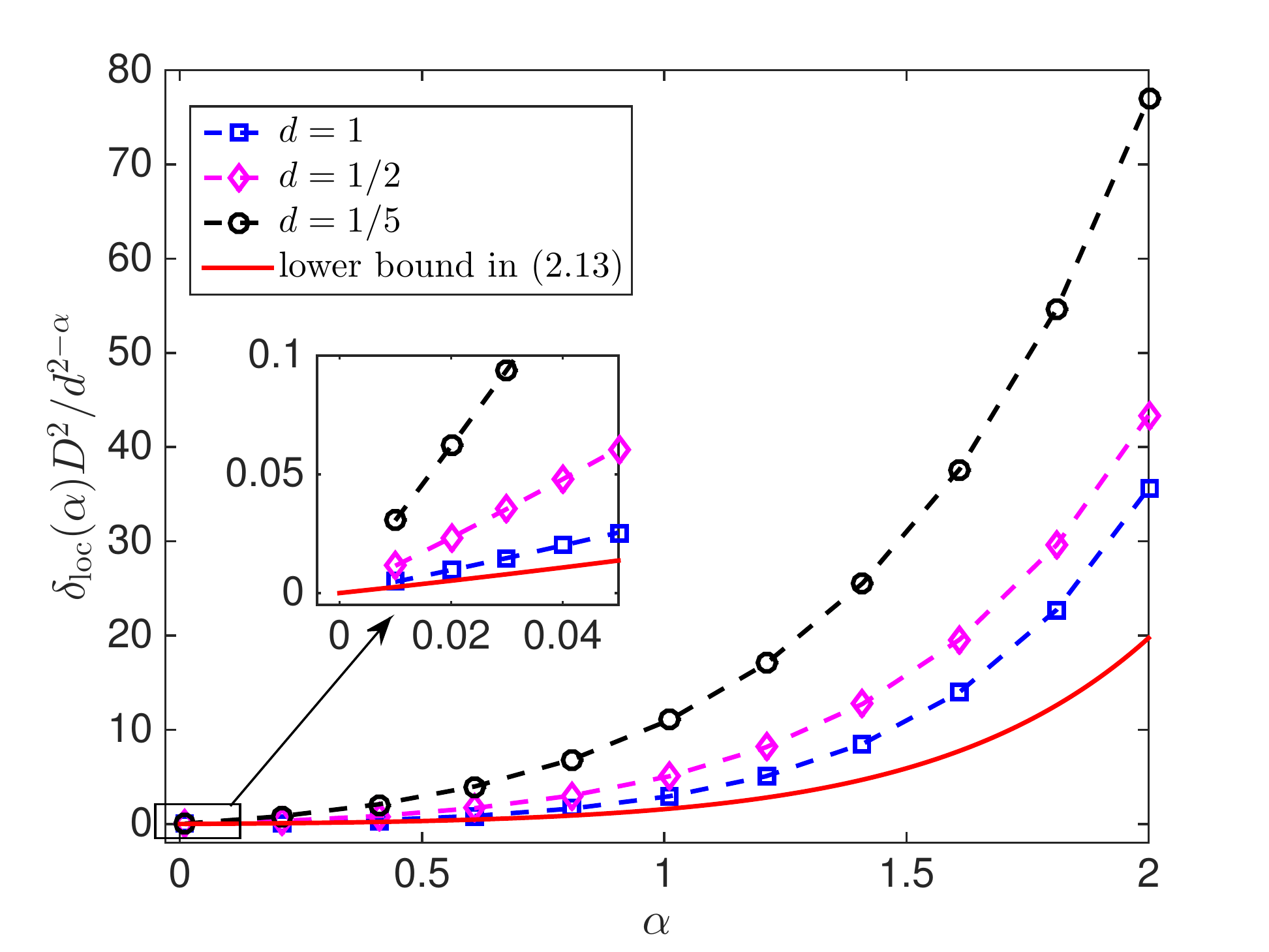,height=6.5cm,width=7cm,angle=0}}
\caption{Comparison of the lower bound in \eqref{gaplc234} (solid line)
and numerical results (dash lines) for the fundamental gap $\delta_{\rm loc}(\alpha)$ of the  local FSO \eqref{eq:eigl} with $n=2$, $V_\bog(x,y)=(x^2+y^2)/2$ and  $\Omega=(0,d)\times(0,\sqrt{1-d^2})$
under different $0<d<1$ (left); and $V_\bog(x,y)\equiv 0$ and $\Omega=\{(x,y)\ | \ x^2+y^2/d^2 <1\}$ with
different $0<d\le 1$ (right).}
\label{fig:box_2d_gap_loc}
\end{figure}

Based on our asymptotic results in the previous subsection and numerical results in  Figs. \ref{fig:box_1d_gap_loc}\&\ref{fig:box_2d_gap_loc}
as well as extensive more numerical results which draw similar conclusion and thus are not shown here for brevity,
we are confident to formulate the gap conjecture
\eqref{conj1} for the local FSO \eqref{lFSO112}.

\section{The fundamental gaps of the FSO \eqref{eq:fso} on bounded domains}\label{sec:loc_classic}
Consider the eigenvalue problem generated by the FSO \eqref{eq:fso}
\begin{equation}\label{eq:eigfso}
\begin{split}
&L_{\rm FSO}\,\phi(\bx):=\left[(-\Delta)^{\frac{\alpha}{2}}+V_\bog(\bx)\right]\phi(\bx)=
E\,\phi(\bx),\qquad \bx\in \Omega,\\
&\phi(\bx)=0, \qquad \bx\in \Omega^c.
\end{split}
\end{equation}
In fact, if $\phi(\bx)$ is an eigenfunction normalized as
\be\label{normbd12}
\|\phi\|^2:=\int_{\mathbb{R}^n}\,|u(\bx)|^2\,d\bx=
\int_{\Omega}\,|u(\bx)|^2\,d\bx=1,
\ee
then the corresponding eigenvalue $E>0$
can also be computed as
\bea\label{Ebd12}
E:&=&E^{(\alpha)}(\phi)=\int_{\Omega}\,
\left[\phi(\bx)^*(-\Delta)^{\alpha/2}\phi(\bx) +V_\bog(\bx)|\phi(\bx)|^2\right]\,d\bx\nonumber \\
&=&\int_{{\mathbb R}^n}|\bk|^\alpha |\hat{\phi}(\bk)|^2\,d\bk+\int_{\Omega}\,
 V_\bog(\bx)|\phi(\bx)|^2\,d\bx,
\eea
where $\hat{\phi}:=\hat{\phi}(\bk)$ is the Fourier transform of $\phi:=\phi(\bx)$.
We will investigate asymptotically and numerically the first two smallest eigenvalues and their corresponding eigenfunctions of \eqref{eq:eigfso}
and then formulate the gap conjecture \eqref{conj2}.

\subsection{Scaling property}
Under the transformation \eqref{scaxp},
consider the re-scaled eigenvalue problem
\begin{equation}\label{eq:eigfsor}
\begin{split}
&\tilde L_{\rm FSO}\,\tilde \phi(\tbx):=\left[(-\Delta)^{\frac{\alpha}{2}}+\tilde V_\botg(\tbx)\right]\tilde \phi(\tbx)=
\tilde E\,\tilde \phi(\tbx),\qquad \tbx\in \tilde\Omega,\\
&\tilde\phi(\tbx)=0, \qquad \tbx\in \tilde\Omega^c.
\end{split}
\end{equation}
Then we have

\begin{lemma}\label{lem:scalefso}
Let $E$ be an eigenvalue of \eqref{eq:eigfso} and $\phi:=\phi(\bx)$ is the corresponding eigenfunction, then $\tilde E =D^\alpha E$
is an eigenvalue of \eqref{eq:eigfsor} and  $\tilde \phi:=\tilde \phi(\tbx) = \phi(D\tbx)=\phi(\bx)$ is the corresponding eigenfunction, which immediately imply the scaling property on the fundamental gap $\delta(\alpha)$  of \eqref{eq:eigfso} as
\be
\delta(\alpha) = \frac{\tdelta(\alpha)}{D^{\alpha}}, \qquad
0<\alpha\le 2,
\ee
where $\tdelta(\alpha)$ is the fundamental gap of
\eqref{eq:eigfsor} with the diameter of $\tilde \Omega$ as $1$.
\end{lemma}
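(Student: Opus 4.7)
The plan is to follow the same pattern as Lemma \ref{lem:scale_loc}, but to replace the spectral representation of the local fractional Laplacian with a direct change-of-variables argument applied to the classical fractional Laplacian.

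First I would establish the scaling identity $(-\Delta_{\bx})^{\alpha/2}\phi(\bx) = D^{-\alpha}(-\Delta_{\tbx})^{\alpha/2}\tphi(\tbx)$, where $\tphi(\tbx):=\phi(D\tbx)$ and $\phi$ is understood to be extended by zero outside $\Omega$. The cleanest route is via the singular integral definition \eqref{def:FL_I}: changing variables $\by=D\tby$ gives $d\by=D^n\,d\tby$ and $|\bx-\by|^{n+\alpha}=D^{n+\alpha}|\tbx-\tby|^{n+\alpha}$, so a factor $D^{n-(n+\alpha)}=D^{-\alpha}$ emerges in front of the integral representation on $\tOmega$. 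Crucially, the zero extension on $\Omega^c$ pulls back to the zero extension on $\tOmega^c$ because $\bx\in\Omega^c$ if and only if $\tbx\in\tOmega^c$, so the identity is genuine on the whole of $\mathbb{R}^n$. Alternatively, one may derive the same identity from the Fourier definition \eqref{def:FL_F}, using that the Fourier transform of $\phi(D\,\cdot)$ is $D^{-n}\wh{\phi}(\cdot/D)$ and then tracking the $|\bk|^\alpha$ weight under the scaling $\bk\mapsto \bk/D$.

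Having the scaling identity in hand, I would substitute into \eqref{eq:eigfso}, observe from \eqref{scaxp} that $\tV_\botg(\tbx)=D^\alpha V_\bog(D\tbx)=D^\alpha V_\bog(\bx)$, and factor out $D^{-\alpha}$ to obtain $E\,\tphi(\tbx)=E\,\phi(\bx)=D^{-\alpha}\bigl[(-\Delta)^{\alpha/2}+\tV_\botg(\tbx)\bigr]\tphi(\tbx)$. This is precisely the statement that $(\tilde E,\tphi)$ with $\tilde E=D^\alpha E$ solves \eqref{eq:eigfsor}, the boundary condition $\tphi(\tbx)=0$ on $\tOmega^c$ being inherited from $\phi(\bx)=0$ on $\Omega^c$ by the same domain identification. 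The scaling relation for the fundamental gap then follows by applying this correspondence to both $E_1(\alpha)$ and $E_2(\alpha)$ and subtracting, since $\phi\leftrightarrow\tphi$ is a bijection on admissible eigenfunctions that preserves the ordering of eigenvalues (it only rescales $L^2$ norms by $D^{n/2}$, which is harmless for the Rayleigh quotient characterisation in \eqref{def:m_E}).

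The main technical point, rather than a genuine obstacle, is the bookkeeping needed to apply $(-\Delta)^{\alpha/2}$ consistently to functions defined on all of $\mathbb{R}^n$ (extended by zero outside the bounded domain) both before and after rescaling, so that the nonlocal change-of-variables is valid and no boundary contribution is lost. Once this is verified the computation collapses to a one-line verification directly analogous to the concluding display of Lemma \ref{lem:scale_loc}.
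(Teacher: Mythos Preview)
Your proposal is correct and follows essentially the same route as the paper: the paper also derives the scaling identity $(-\Delta)^{\alpha/2}\phi(\bx)=D^{-\alpha}(-\Delta)^{\alpha/2}\tphi(\tbx)$ directly from the singular integral definition \eqref{def:FL_I} via the change of variables $\by=D\tby$, checks that zero extension on $\Omega^c$ corresponds to zero extension on $\tOmega^c$, and then substitutes into \eqref{eq:eigfso} to factor out $D^{-\alpha}$. Your additional remarks on the Fourier route and on the bijection preserving the eigenvalue ordering are fine elaborations but not needed beyond what the paper does.
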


\begin{proof}
From \eqref{def:FL_I}, a direct computation implies the scaling property of the  fractional Laplacian operator
\bea\label{dtapsc}
(-\Delta)^{\alpha/2}\phi(\bx)&=&C_{n,\alpha}
\int_{\mathbb{R}^n}\frac{\phi(\bx)-\phi(\by)}{|\bx-\by|^{n+\alpha}}\,d\by
= C_{n,\alpha}\int_{\mathbb{R}^n}\frac{\phi(D\tbx)
-\phi(D\tby)}{|D\tbx-D\tby|^{n+\alpha}}\,D^{n}\,{d\tby} \nonumber\\
&=&D^{-\alpha} C_{n,\alpha}\int_{\mathbb{R}^n}\frac{\tilde\phi(\tbx)-
\tilde\phi(\tby)}{|\tbx-\tby|^{n+\alpha}}\,d\tby
=D^{-\alpha}\,(-\Delta)^{\alpha/2}\tilde\phi(\tbx), \quad \bx\in\Omega,
\ \tbx\in \tilde\Omega.
\eea
Noticing
\be
\phi(\bx)=0, \quad \bx\in \Omega^c \qquad \Longleftrightarrow \qquad \tilde\phi(\tbx)=0, \quad \tbx\in \tilde\Omega^c.
\ee
Substituting \eqref{dtapsc} into \eqref{eq:eigfso}, noting \eqref{eq:eigfsor}, we get
\bea
E\,\tphi(\tbx)&=&E\,\phi(\bx)=\left[(-\Delta)^{\frac{\alpha}{2}}+V_\bog(\bx)\right]
\phi(\bx)=\left[D^{-\alpha}\,(-\Delta)^{\frac{\alpha}{2}} +V_\bog(D\tbx)\right]\tphi(\tbx)\nonumber\\
&=&D^{-\alpha}
\left[(-\Delta)^{\frac{\alpha}{2}} +D^\alpha V_\bog(D\tbx)\right]
\tphi(\tbx)
=D^{-\alpha}
\left[ (-\Delta)^{\frac{\alpha}{2}}+\tilde V_\botg(\tbx)\right]
\tilde \phi(\tbx),
\eea
where $\bx\in \Omega$ and $\tbx\in \tilde \Omega$,
which immediately implies that $\tilde \phi(\tbx)$ is an eigenfunction of the operator $(-\Delta)^{\frac{\alpha}{2}}+\tilde V_\botg(\tbx)$ with the eigenvalue $\tilde E=D^\alpha E$.
\end{proof}

\subsection{Asymptotic results when $0\le 2-\alpha\ll1$}\label{subsec:box_potential}
For the fundamental gap $\delta(\alpha)$ of the FSO \eqref{eq:fso}
in 1D with box potential, we have


\smallskip

\begin{lemma}
Taken $n=1$, $\Omega=(0,1)$ and $V_\bog(x)\equiv 0$ for $x\in \Omega$ in \eqref{eq:eigfso}, when $0\le \varepsilon:=2-\alpha\ll1$, we
have
\begin{align}\label{asym:box_1d}
\delta(\alpha)\approx& -\frac{2\pi^2}{\Gamma(4-\alpha)}
\sec(\alpha\pi/2)\left[4\,\,_1F_2(2;2-\alpha/2,5/2-\alpha/2;
-\pi^2)\right.\nonumber\\
&\left.+\,\,_1F_2(2;2-\alpha/2,5/2-\alpha/2;-\pi^2/4)\right],
\end{align}
where $_pF_q(a_1,\dots,a_p;b_1,\dots,b_q;z)$ is the generalized hypergeometric function defined as \cite{Askey,Daalhuis}
\be\label{def:pFq}
_pF_q(a_1,\dots,a_p;b_1,\dots,b_q;z)=\sum_{k=0}^{\infty}
\frac{(a_1)_k\dots(a_p)_k}{(b_1)_k\dots(b_q)_k}\frac{z^k}{k!},
\ee
with $(a)_0=1$ and $(a)_k=a(a+1)\dots(a+k-1)$.
\end{lemma}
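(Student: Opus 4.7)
The plan is to exploit the perturbative structure around $\alpha=2$, where the classical Dirichlet eigenfunctions $\phi_k(x)=\sqrt{2}\sin(k\pi x)\chi_{(0,1)}(x)$ are exact eigenfunctions of $-\Delta$ with $E_k(2)=k^2\pi^2$. Since $(-\Delta)^{\alpha/2}$ acts as multiplication by $|\xi|^{\alpha}$ on the Fourier side, standard first-order perturbation theory gives
\be
E_k(\alpha)\approx E_k(2)+\bigl\langle \phi_k,\bigl((-\Delta)^{\alpha/2}-(-\Delta)\bigr)\phi_k\bigr\rangle=\int_{\mathbb{R}}|\xi|^{\alpha}\,|\hphi_k(\xi)|^2\,d\xi,
\ee
because the first-order correction to the eigenfunction lies in the orthogonal complement of $\phi_k$ and drops out of the diagonal matrix element. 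Consequently $\delta(\alpha)\approx I_2(\alpha)-I_1(\alpha)$, where $I_k(\alpha):=\int_{\mathbb R}|\xi|^{\alpha}|\hphi_k(\xi)|^2\,d\xi$, and the remaining task is to evaluate $I_k(\alpha)$ in closed form.

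A direct Fourier evaluation of $\hphi_k(\xi)=\sqrt{2}\int_0^1\sin(k\pi x)e^{-i\xi x}dx$ yields
\be
|\hphi_k(\xi)|^2=\frac{4k^2\pi^2\bigl(1-(-1)^k\cos\xi\bigr)}{(k^2\pi^2-\xi^2)^2},
\ee
which is smooth since the numerator has a double zero at $\xi=\pm k\pi$. To obtain a closed form for $I_k(\alpha)$, I would expand $1-(-1)^k\cos\xi$ in its Taylor series and integrate term by term against the kernel $|\xi|^{\alpha}(k^2\pi^2-\xi^2)^{-2}$. Each resulting moment is a classical beta-type integral whose value, via the Mellin--Barnes representation, involves $\Gamma(\alpha+2m+1)\Gamma(3-\alpha-2m)(k\pi)^{\alpha+2m-3}$; applying the reflection formula $\Gamma(z)\Gamma(1-z)=\pi/\sin(\pi z)$ extracts the prefactor $-2\pi^2\sec(\alpha\pi/2)/\Gamma(4-\alpha)$ and leaves a ratio of Pochhammer symbols that matches the series \eqref{def:pFq} with $a_1=2$, $b_1=2-\alpha/2$, $b_2=5/2-\alpha/2$, evaluated at $z=-(k\pi/2)^2$. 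The combination $I_2(\alpha)-I_1(\alpha)$ then naturally produces the two $_1F_2$ contributions at $z=-\pi^2$ and $z=-\pi^2/4$ with weights $4$ and $1$ coming from the $k^2$ prefactor, reproducing \eqref{asym:box_1d}.

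The main obstacle is justifying the term-by-term integration: the double pole at $\xi=k\pi$ makes each individual moment divergent in the naive sense, so the exchange of sum and integral requires regularization. The cleanest route is analytic continuation in $\alpha$ from a strip in which every individual moment is absolutely integrable, combined with uniqueness of the analytic extension to $\alpha\in(0,2]$; alternatively one can interpret the integrals as Hadamard finite parts whose singular pieces cancel precisely against the double zero of the numerator. Once this regularization is in place, matching the resulting double series with \eqref{def:pFq} becomes routine Gamma-function bookkeeping, and the asymmetric coefficients $4$ and $1$ in \eqref{asym:box_1d} arise automatically from the $k^2$ weights in $|\hphi_k|^2$.
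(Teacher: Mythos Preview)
Your approach is essentially the paper's: approximate $\phi_l^{(\alpha)}$ by the classical Dirichlet eigenfunctions $\phi_l$, pass to the Fourier side via the Rayleigh quotient, and then evaluate $\int_{\mathbb R}|k|^\alpha|\hat\phi_l(k)|^2\,dk$ in closed form. The paper arrives at exactly the integrals you write, namely $2l^2\pi\int_{\mathbb R}\frac{1-(-1)^l\cos k}{(k^2-l^2\pi^2)^2}|k|^\alpha\,dk$, and then simply quotes the $_1F_2$ expressions without any derivation; the Legendre duplication formula applied to $\Gamma(2-\alpha/2)\Gamma(5/2-\alpha/2)$ collapses their prefactor to the $-2\pi^2\sec(\alpha\pi/2)/\Gamma(4-\alpha)$ you extract.

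Where you go further than the paper is in sketching an actual derivation of that integral identity. Be aware that your term-by-term route has a second divergence you did not flag: beyond the double pole at $\xi=k\pi$, each Taylor moment $\int_{\mathbb R}|\xi|^{\alpha+2m}(\xi^2-k^2\pi^2)^{-2}\,d\xi$ also diverges at infinity as soon as $\alpha+2m\ge 3$, i.e.\ for every $m\ge1$ in the regime $\alpha\approx 2$. So the analytic-continuation or finite-part interpretation you mention is not optional but mandatory from the first nontrivial term onward, and must handle both endpoints simultaneously. A cleaner alternative that avoids this bookkeeping is to compute the Mellin transform of $(1\pm\cos\xi)$ directly (it is $-\Gamma(s)\cos(\pi s/2)$ on the relevant strip) and convolve with that of $(\xi^2-a^2)^{-2}$, which yields the $_1F_2$ series without ever expanding the cosine.
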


\smallskip

\begin{proof}
For $n=1$, $\Omega=(0,1)$ and $V_\bog(x)\equiv 0$ in \eqref{eq:eigfso},
when $\alpha=2$, the first two smallest eigenvalues and their corresponding
normalized eigenfunctions can be given as \cite{BC,BL}
\be
E_l(2)=l^2\pi^2, \qquad \phi_l(x)=\left\{
\begin{array}{ll}
\sqrt{2}\sin(l\pi x), &\text{ if } x\in(0,1)\\
0, &\text{ otherwise,}\\
\end{array}
\right. \quad l=1,2.
\ee
The Fourier transform of $\phi_l(x)$ ($l=1,2$) can be computed as
\be\label{Ebd1289}
\hat{\phi}_l(k)=\frac{\sqrt{2}l\pi((-1)^le^{-ik}-1)}{k^2-l^2\pi^2},
\qquad k\in{\mathbb R}.
\ee
It is worth noticing that $k=\pm l\pi$ are not singular points of $\hat{\phi}_l(k)$. In fact, we have that $\lim_{k\to l\pi}\hat{\phi}_l(k)=-i/2$ and $\lim_{k\to -l\pi}\hat{\phi}_l(k)=i/2$.
When $\alpha$ satisfies $0\le 2-\alpha\ll 1$,
the two normalized eigenfunctions $\phi_l^{(\alpha)}(x)$ ($l=1,2$) corresponding to the first two smallest eigenvalues
of \eqref{eq:eigfso} can be well approximated by $\phi_l(x)$ ($l=1,2$), respectively, i.e.
\be\label{efbda12}
\phi_1^{(\alpha)}(x)\approx \phi_1(x),
\qquad \phi_2^{(\alpha)}(x)\approx \phi_2(x), \qquad
x\in {\mathbb R}.
\ee
Substituting \eqref{efbda12} into \eqref{Ebd12}, noting \eqref{Ebd1289},
we can obtain
the approximations of the first two smallest eigenvalues $E_l(\alpha)$
($l=1,2$) as
\bea\label{Eka456}
E_l(\alpha)&=&E^{(\alpha)}(\phi_l^{(\alpha)})
\approx E^{(\alpha)}(\phi_l)=\int_{0}^1\,\phi_l(x)\, (-\Delta)^{\alpha/2}\phi_l(x)\,dx\nonumber\\
&=&\frac{1}{2\pi}\int_{\mathbb{R}}\,|k|^{\alpha}|\hat{\phi}_l(k)|^2\,dk
=2l^2\pi\int_{\mathbb{R}}\frac{1-(-1)^l\cos(k)}{(k^2-l^2\pi^2)^2}
|k|^{\alpha}\,dk, \quad l=1,2.
\eea
Combining \eqref{def:pFq} and \eqref{Eka456}, we obtain
\be\label{E1E2a}
\begin{split}
&E_1(\alpha)\approx\frac{2^{\alpha-2}\pi^{\frac{5}{2}}
\sec(\alpha\pi/2)}{\Gamma\left(2-\frac{\alpha}{2}\right)
\Gamma\left(\frac{5-\alpha}{2}\right)}\, \,_1F_2(2;2-\alpha/2,5/2-\alpha/2;-\pi^2/4),\\
&E_2(\alpha)\approx\frac{2^{\alpha}\pi^{\frac{5}{2}}
\sec(\alpha\pi/2)}{\Gamma\left(2-\frac{\alpha}{2}\right)
\Gamma\left(\frac{5-\alpha}{2}\right)} \,\,_1F_2(2;2-\alpha/2,5/2-\alpha/2;-\pi^2).
\end{split}
\ee
Plugging \eqref{E1E2a} into \eqref{gapfso}, we obtain \eqref{asym:box_1d} immediately and
the proof is completed.
\end{proof}


\smallskip

Similarly, taken $n=2$, $\Omega=(0,1)\times(0,L)$ with $0<L\le 1$ and $V_\bog(\bx)\equiv 0$ for $\bx\in \Omega$ in \eqref{eq:eigfso}, when $0<\varepsilon:=2-\alpha\ll1$, we
have
\be\label{asym:box_2d1}
\begin{split}
&E_1(\alpha)\approx \frac{\pi^2}{L^3}\iint_{\mathbb{R}^2}\, (k_1^2+k_2^2)^{\alpha/2} \; \frac{2+2\cos(k_1)}{(k_1^2-\pi^2)^2}\;\frac{2+2\cos(k_2L)}
{(k_2^2-\pi^2/L^2)^2}\,dk_1dk_2,\\
&E_2(\alpha)\approx \frac{4\pi^2}{L^3}\iint_{\mathbb{R}^2}\, (k_1^2+k_2^2)^{\alpha/2} \; \frac{2-2\cos(k_1)}{(k_1^2-4\pi^2)^2}\;\frac{2+2\cos(k_2L)}
{(k_2^2-\pi^2/L^2)^2}\,dk_1dk_2,\\
&\delta(\alpha)=E_2(\alpha)-E_1(\alpha).
\end{split}
\ee
Then one can obtain an asymptotic approximation of
$\delta(\alpha)=E_2(\alpha)-E_1(\alpha)$ when $0<2-\alpha\ll1$.
Extension to \eqref{eq:eigfso} with
 $n=3$, $\Omega=(0,1)\times(0,L_1)\times (0,L_2)$ with
$0<L_2\le L_1\le 1$ and $V(\bx)\equiv 0$ for $\bx\in \Omega$
can be done in a similar way. The details are omitted here
for brevity.

Unlike the case for the local FSO, for the FSO \eqref{eq:eigfso},
it is difficult to get a concise  lower bound of $\delta(\alpha)$
based on the asymptotic result \eqref{asym:box_1d} in 1D and \eqref{asym:box_2d1} in 2D.
Since our aim is not to get an optimal lower bound of $\delta(\alpha)$,
one idea is to check whether the lower bound for the local FSO obtained in the previous section remains valid for the FSO.
In order to do so, Fig \ref{fig:box_1d_gap} compares the fundamental gaps
of \eqref{eq:eigfso} obtained numerically, the asymptotic approximations
given in \eqref{asym:box_1d} for 1D, 
and the lower bounds of $\delta_{\rm loc}(\alpha)$
given in \eqref{gaplc234} (or \eqref{conj1}) and \eqref{lgap765} for $n=1$ 
and $0\le 2-\alpha \ll1$.

\begin{figure}[htbp]
\centerline{\psfig{figure=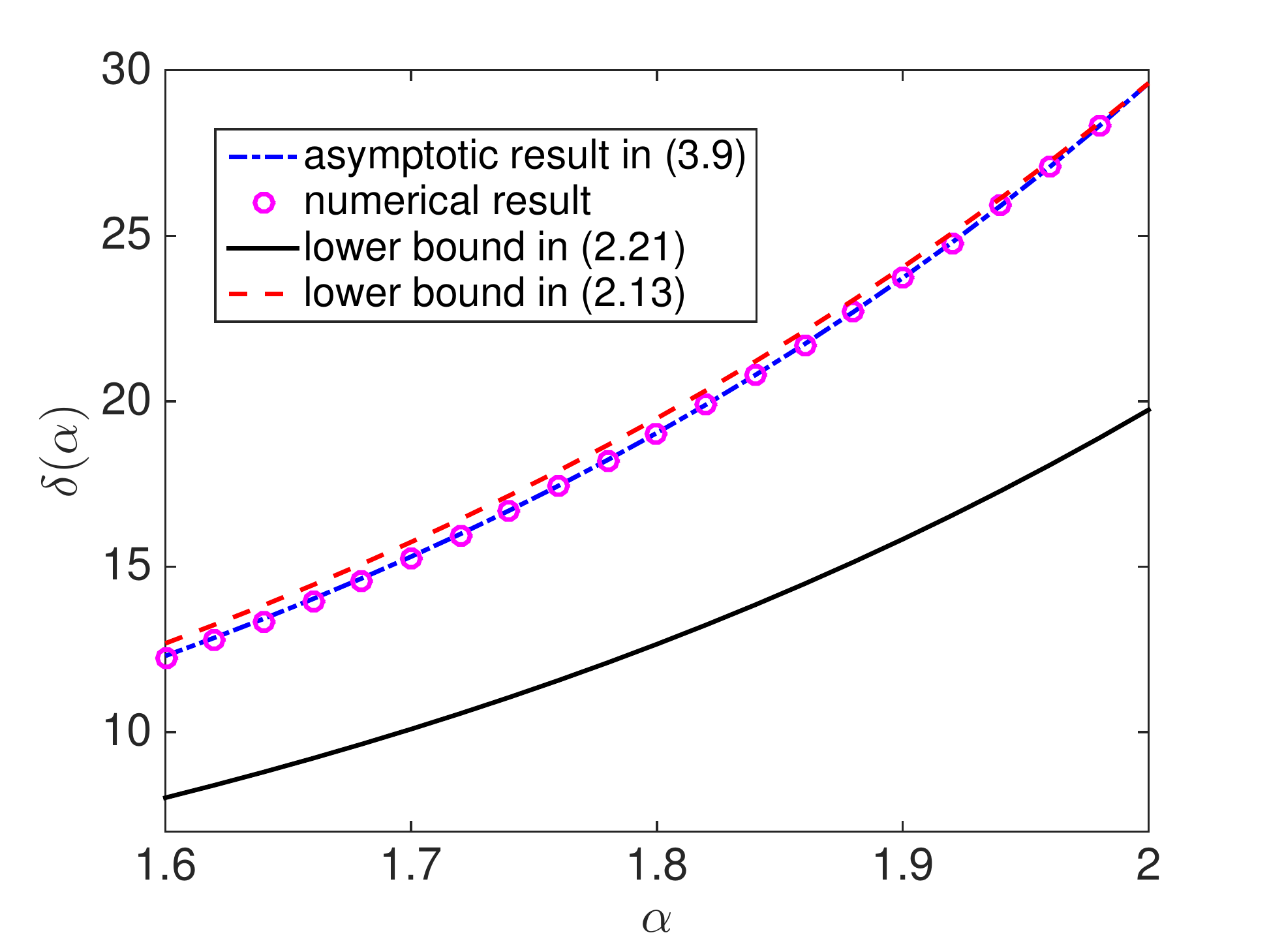,height=6cm,width=10cm,angle=0}}
\caption{Comparison of the lower bound in \eqref{lgap765} (solid line) and
in \eqref{gaplc234} (dotted line),
numerical results (dash line)  and asymptotic results in
\eqref{asym:box_1d} 
(dash-dot line) for the fundamental gap $\delta(\alpha)$ of the FSO
\eqref{eq:eigfso} with $n=1$,
$\Omega=(0,1)$ and $V_\bog(x)\equiv 0$ for $x\in \Omega$.
}
\label{fig:box_1d_gap}
\end{figure}

From Fig. \ref{fig:box_1d_gap}, we can see that: (i)
our asymptotic results agree with the numerical results very well
when $0\le 2-\alpha\ll 1$; (ii) the lower bound of $\delta_{\rm loc}(\alpha)$
given in \eqref{lgap765} is still a lower bound of $\delta(\alpha)$;
 and (iii) when $n=1$, the lower bound of $\delta_{\rm loc}(\alpha)$
given in \eqref{conj1} is not a lower bound of $\delta(\alpha)$.
With these observations, we will test numerically
whether the lower bound of $\delta_{\rm loc}(\alpha)$
given in \eqref{lgap765} is still a lower bound of $\delta(\alpha)$
for general geometry and general potential in the next subsection.

\subsection{Numerical results for general potentials}
Numerical solution of the eigenvalue problem \eqref{eq:eigfso} is very challenging due to the non-local boundary condition in an unbounded domain.
There exist some numerical methods for PDEs with fractional Laplacian in unbounded domains  based on finite-difference methods (cf. \cite{Duo,huang2014} and spectral methods (cf. \cite{klein2014,mao2017}). In \cite{Sheng182}, we developed a promising method using the mapped Chebyshev functions for solving PDEs with fractional Laplacian in unbounded domain.
 We adopt this method  to solve \eqref{eq:eigfso} numerically.
Thanks to the scaling property shown in Lemma \ref{lem:scalefso},
 the diameter of the domain
$\Omega$ is always  taken as $D=1$.

Fig. \ref{fig:box_1d_gap2} shows the numerical results
on the fundamental gap $\delta(\alpha)$ of \eqref{eq:eigfso}
when $n=1$, $\Omega=(0,1)$  with different external potentials $V_\bog(x)$.
Fig. \ref{fig:box_2d_numer} shows similar results
with $n=2$, different $\Omega$ and different
external potentials $V_\bog(x,y)$.

\begin{figure}[htbp]
\centerline{\psfig{figure=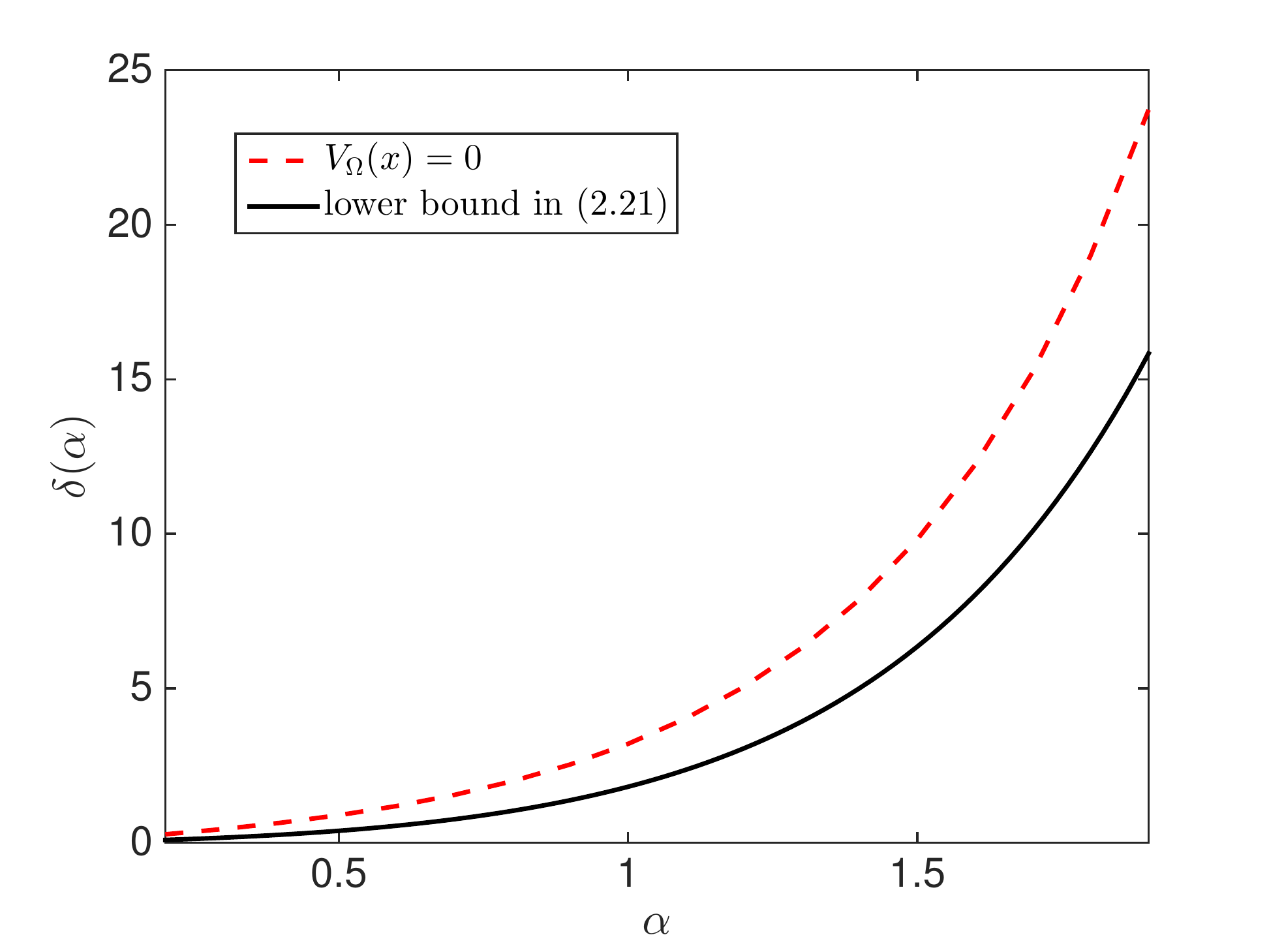,height=6cm,width=7cm,angle=0}
\psfig{figure=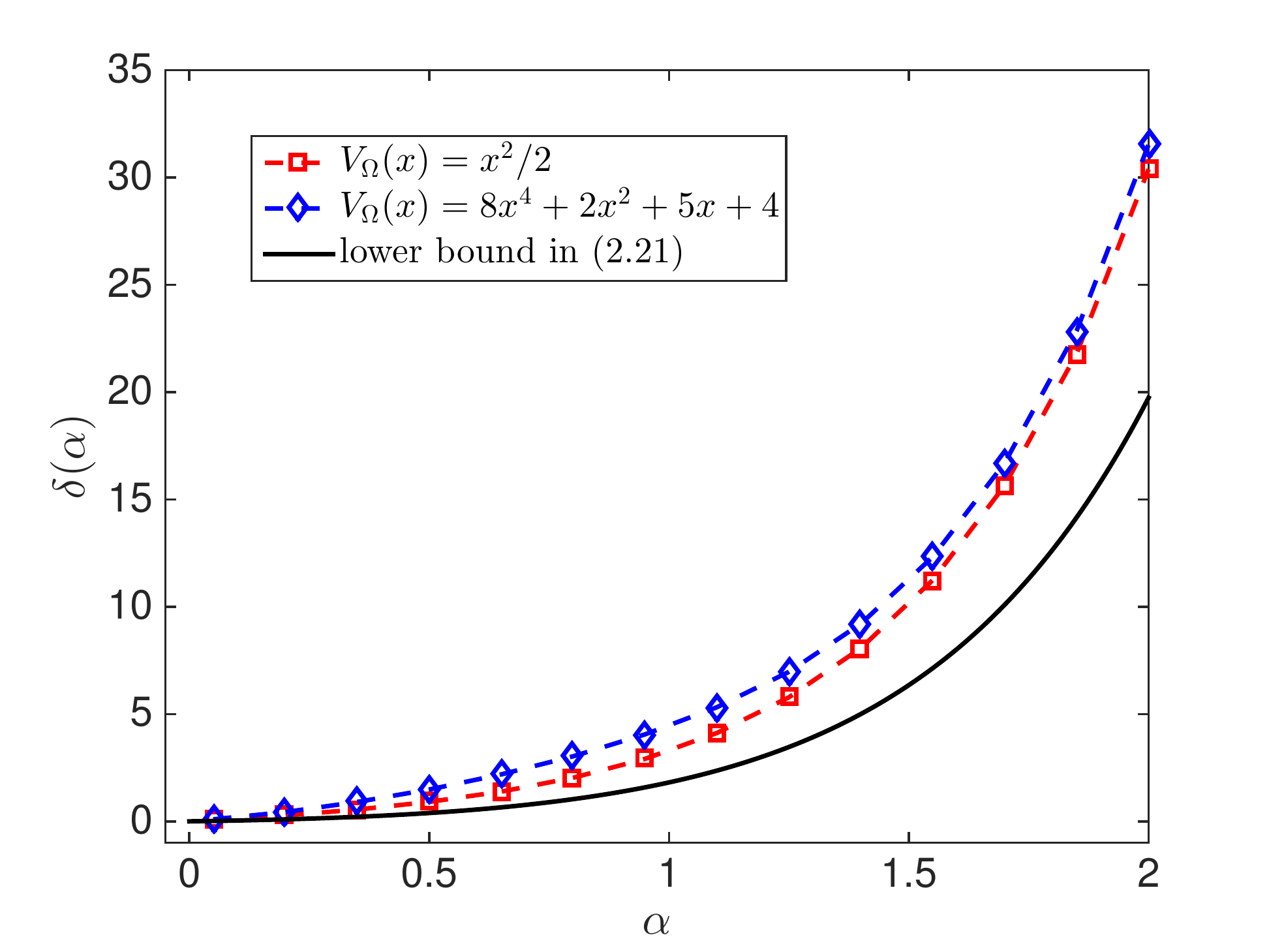,height=6cm,width=7cm,angle=0}}
\caption{Comparison of the lower bound in \eqref{lgap765} (solid line) and
numerical results (dash lines)  for the fundamental gap $\delta(\alpha)$ of the FSO \eqref{eq:eigfso} with  $n=1$, $\Omega=(0,1)$ and
$V_\bog(x)\equiv0$ (left) or different convex potentials $V_\bog(x)$ (right).}
\label{fig:box_1d_gap2}
\end{figure}

\begin{figure}[htbp]
\centerline{\psfig{figure=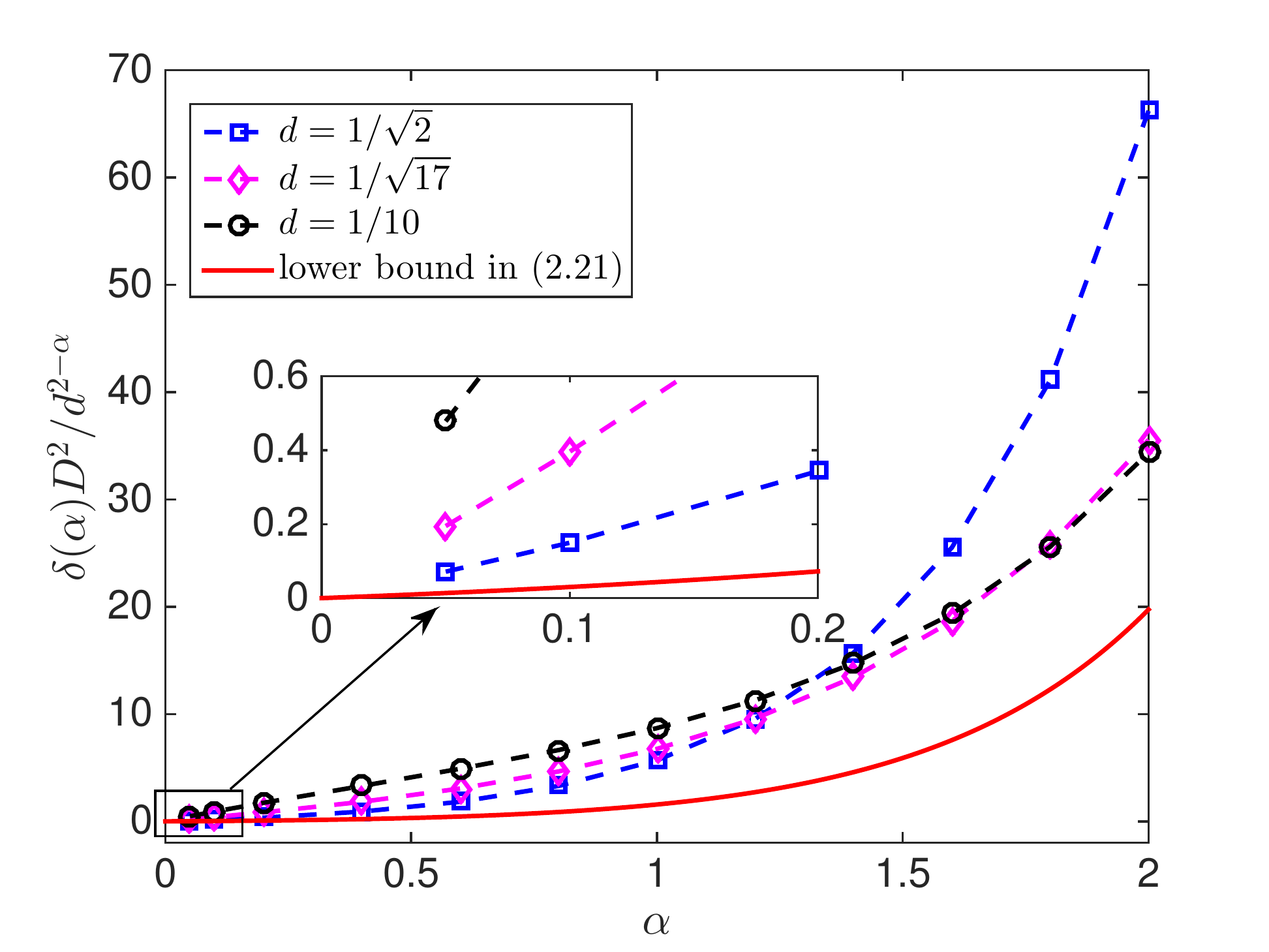,height=6cm,width=7cm,angle=0}
\psfig{figure=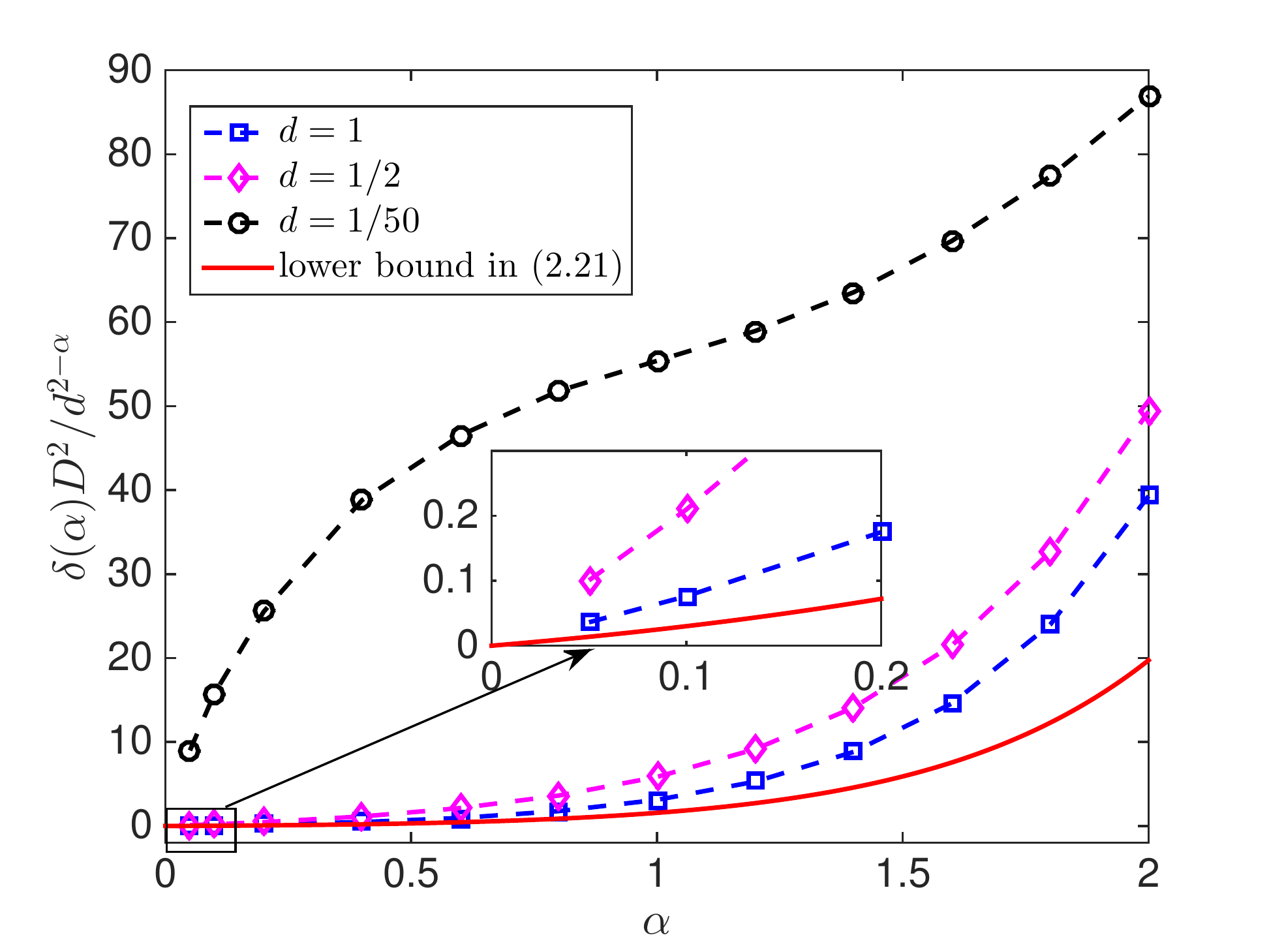,height=6cm,width=7cm,angle=0}}
\centerline{\psfig{figure=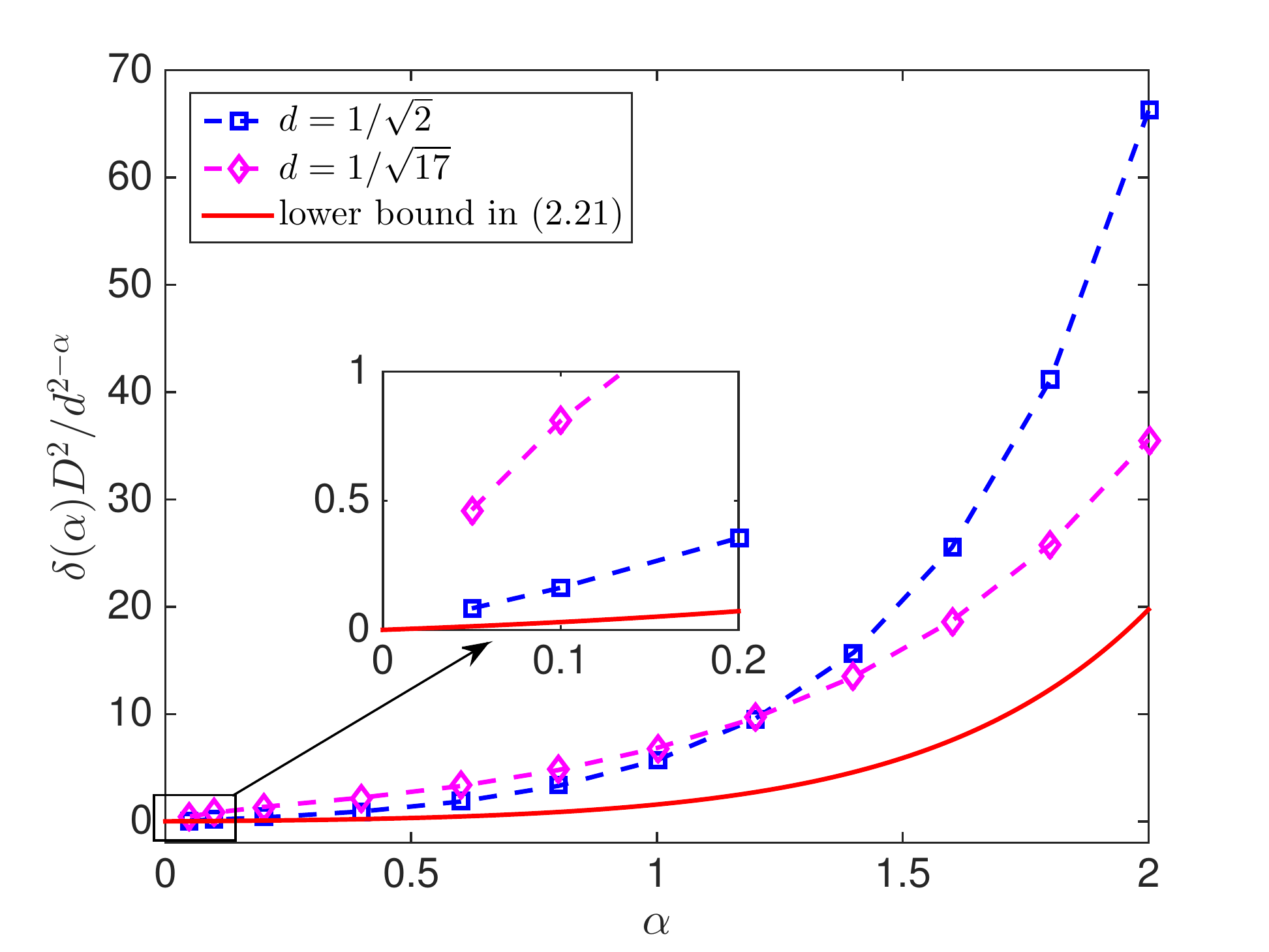,height=6cm,width=7cm,angle=0}
\psfig{figure=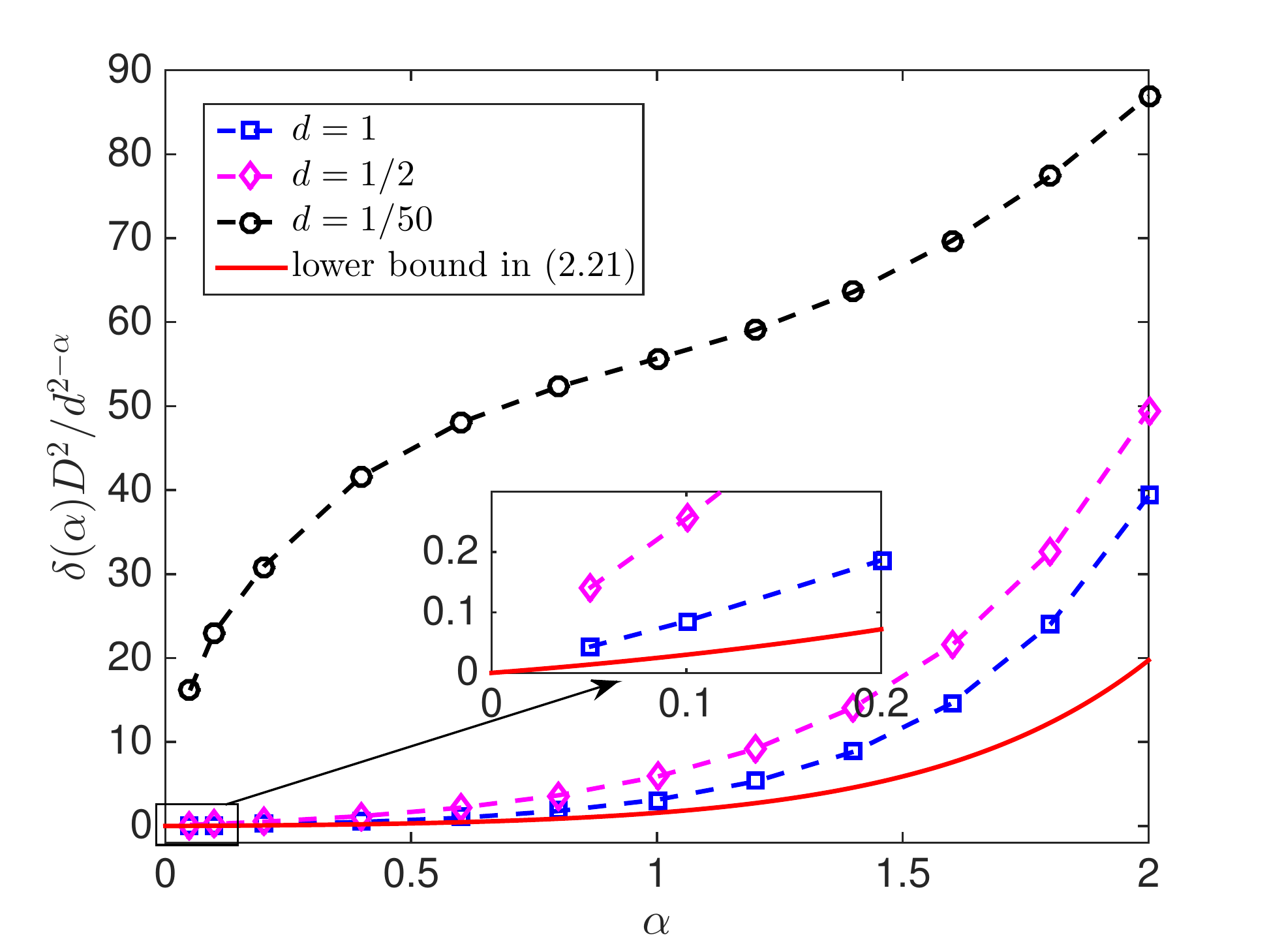,height=6cm,width=7cm,angle=0}}
\caption{Comparison of the lower bound in \eqref{lgap765} (solid line) and
numerical results (dash lines)  for the fundamental gap $\delta(\alpha)$ of the FSO \eqref{eq:eigfso} with  $n=2$ and a rectangular type domain $\Omega=(0,d)\times(0,\sqrt{1-d^2})$ with different $0<d< 1$(left);
and an elliptic domain $\Omega=\{(x,y)\,|\,x^2+y^2/d^2\le1\}$ with different
$0<d\le 1$ (right). The external potential is chosen as $V_\bog(x,y)\equiv0$ (top) or $V_\bog(x,y)=(x^2+y^2)/2$ (bottom) in $\Omega$.}
\label{fig:box_2d_numer}
\end{figure}

Again, based on our asymptotic results in the previous subsection and numerical results in  Figs. \ref{fig:box_1d_gap2}\&\ref{fig:box_2d_numer}
as well as extensive  numerical results which draw similar conclusion and thus are not shown here for brevity,
we are confident to formulate the gap conjecture
\eqref{conj2} for the FSO \eqref{eq:eigfso}.

\section{The fundamental gaps of the FSO \eqref{eq:fso} in the whole space}\label{sec:global_classic}
In this section, we will study asymptotically and numerically the first two smallest eigenvalues and their corresponding eigenfunctions of the eigenvalue problem \eqref{eq:eig} generated by the FSO \eqref{eq:fso} in the whole space
and then formulate a gap conjecture. Here we assume $V(\bx)\in L^\infty_{\rm loc}({\mathbb R}^n)$.

  In many applications \cite{BC}, the following harmonic potential is widely used
\be \label{harmold}
V(\bx)=\sum_{j=1}^n \gamma_j^2 x_j^2, \qquad
\bx=(x_1,\ldots,x_n)^T\in {\mathbb R}^n,
\ee
where $\gamma_1>0$, $\ldots$, $\gamma_n>0$ are given positive constants.
Without loss of generality, we assume that $0<\gamma_1\le \ldots \le \gamma_n$. Denote $\gm:=\gm_1$ and $\eta_j:=\frac{\gm_j}{\gm_1}\ge1$ ($j=1,\ldots,n$) and $\eta =\max_{1\le j\le n} \eta_j=\eta_n=\frac{\gm_n}{\gm_1}\ge1$, then the harmonic potential \eqref{harm}
can be re-written as
\be \label{harm}
V(\bx)=\gm^2 \sum_{j=1}^n \eta_j^2 x_j^2=\gm^2\left(x_1^2+\sum_{j=2}^n \eta_j^2 x_j^2\right), \qquad
\bx=(x_1,\ldots,x_n)^T\in {\mathbb R}^n.
\ee

\subsection{Scaling property}
Introduce
\begin{equation}\label{scaxpw}
D:=\gm^{-\frac{2}{2+\alpha}}, \quad \tbx=\frac{\bx}{D},
\quad \tilde{V}(\tbx) =\tilde x_1^2+\sum_{j=2}^n \eta_j^2 \tilde x_j^2, \quad
\tilde \phi(\tbx)=\phi(\bx), \quad \bx,\tbx\in{\mathbb R}^n,
\end{equation}
and consider the re-scaled eigenvalue problem
\begin{equation}\label{eq:eiglrw}
\tilde L_{\rm FSO}\,\tilde\phi(\tbx):=\left[(-\Delta)^{\frac{\alpha}{2}}+\tilde {V}(\tbx)\right]\tilde\phi(\tbx)=
\tilde E\,\tilde\phi(\tbx),\qquad \tbx\in {\mathbb R}^n,
\end{equation}
then we have

\smallskip

\begin{lemma}\label{lem:scalew}
Let $E$ be an eigenvalue of \eqref{eq:eig} with \eqref{harm} and $\phi:=\phi(\bx)$ is the corresponding eigenfunction, then $\tilde E =\gm^{-\frac{2\alpha}{2+\alpha}} E$
is an eigenvalue of \eqref{eq:eiglrw} and  $\tilde \phi:=\tilde \phi(\tbx) = \phi(D\tbx)=\phi(\bx)$ is the corresponding eigenfunction, which immediately imply the scaling property on the fundamental gap $\delta(\alpha)$ of
\eqref{eq:eig} with \eqref{harm} as
\be
\delta(\alpha) = \gm^{\frac{2\alpha}{2+\alpha}}\; \tdelta(\alpha), \qquad
0<\alpha\le 2,
\ee
where $\tdelta(\alpha)$ is the fundamental gap of \eqref{eq:eiglrw}.
\end{lemma}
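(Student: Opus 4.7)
The plan is to mirror the template of Lemma~\ref{lem:scalefso}: exploit the $\alpha$-homogeneity of the fractional Laplacian under dilations, apply the same dilation to the quadratic potential, and choose $D$ so that the two resulting prefactors coincide. The value $D=\gm^{-2/(2+\alpha)}$ is forced by this balance, not guessed.

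First I would reuse the scaling identity \eqref{dtapsc} established in the proof of Lemma~\ref{lem:scalefso}, which only depends on the change of variables $\by=D\tby$ in the integral representation \eqref{def:FL_I} and hence holds for any sufficiently smooth $\phi\in L^2({\mathbb R}^n)$, not only for functions supported in a bounded set. This gives $(-\Delta)^{\alpha/2}\phi(\bx)=D^{-\alpha}(-\Delta)^{\alpha/2}\tphi(\tbx)$. Next, substituting $\bx=D\tbx$ into the harmonic potential \eqref{harm} immediately yields $V(D\tbx)=\gm^2 D^2\bigl(\tx_1^2+\sum_{j=2}^n \eta_j^2\,\tx_j^2\bigr)=\gm^2 D^2\,\tV(\tbx)$ with $\tV$ as defined in \eqref{scaxpw}.

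The decisive algebraic step is to match the two prefactors $D^{-\alpha}$ (from the fractional Laplacian) and $\gm^2 D^2$ (from the potential). Demanding $D^{-\alpha}=\gm^2 D^2$ forces $D^{2+\alpha}=\gm^{-2}$, i.e.\ $D=\gm^{-2/(2+\alpha)}$, which is exactly the choice recorded in \eqref{scaxpw}, and the common factor then equals $\gm^{2\alpha/(2+\alpha)}$. Substituting both transformations into the eigenvalue equation \eqref{eq:eig} yields
\be
E\,\tphi(\tbx)=\bigl[D^{-\alpha}(-\Delta)^{\alpha/2}+\gm^2 D^2\,\tV(\tbx)\bigr]\tphi(\tbx)=\gm^{\frac{2\alpha}{2+\alpha}}\bigl[(-\Delta)^{\alpha/2}+\tV(\tbx)\bigr]\tphi(\tbx),
\ee
so $\tphi$ solves \eqref{eq:eiglrw} with eigenvalue $\tilde E=\gm^{-2\alpha/(2+\alpha)}E$. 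Since the map $E\mapsto\tilde E$ is a positive monotone rescaling, it preserves order and multiplicities, so applying it to the first two eigenvalues $E_1<E_2$ gives $\tilde E_1<\tilde E_2$ with $\delta(\alpha)=E_2-E_1=\gm^{2\alpha/(2+\alpha)}(\tilde E_2-\tilde E_1)=\gm^{2\alpha/(2+\alpha)}\tdelta(\alpha)$.

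There is no genuine obstacle; the argument is essentially a one-line algebraic identity once the homogeneity of $(-\Delta)^{\alpha/2}$ is in hand. The only point warranting a sentence of care is that everything takes place on ${\mathbb R}^n$, so no Dirichlet trace needs to be monitored across the rescaling; the confining quadratic potential ensures that $\phi$ and $\tphi$ lie in the natural form domain of their respective operators and that the spectrum is discrete, so $E_1,E_2$ and $\tilde E_1,\tilde E_2$ are well-defined.
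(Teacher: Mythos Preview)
Your proof is correct and follows essentially the same route as the paper's: invoke the dilation homogeneity \eqref{dtapsc} of $(-\Delta)^{\alpha/2}$, substitute $\bx=D\tbx$ into the harmonic potential to obtain $V(D\tbx)=\gm^2 D^2\,\tV(\tbx)$, and use $D=\gm^{-2/(2+\alpha)}$ so that $D^{2+\alpha}\gm^2=1$ and the operator collapses to $\gm^{2\alpha/(2+\alpha)}\bigl[(-\Delta)^{\alpha/2}+\tV\bigr]$. Your additional remarks on why $D$ is forced by the prefactor matching and on order-preservation of the eigenvalue rescaling are harmless elaborations of the same argument.
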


\smallskip

\begin{proof}
From \eqref{scaxpw}, similar to \eqref{dtapsc}, we have
\be\label{dtapscw}
(-\Delta)^{\alpha/2}\phi(\bx)
=D^{-\alpha}\,(-\Delta)^{\alpha/2}\tilde\phi(\tbx), \quad \bx,\tbx\in {\mathbb R}^n.
\ee
Substituting \eqref{dtapscw} into \eqref{eq:eig} with \eqref{harm}, noting \eqref{harm}-\eqref{eq:eiglrw}, we get
\bea
E\,\tphi(\tbx)&=&E\,\phi(\bx)=\left[(-\Delta)^{\frac{\alpha}{2}}+V(\bx)\right]
\phi(\bx)=\left[D^{-\alpha}\,(-\Delta)^{\frac{\alpha}{2}} +V(D\tbx)\right]\tphi(\tbx)\nonumber\\
&=&D^{-\alpha}
\left[(-\Delta)^{\frac{\alpha}{2}} +D^{2+\alpha}\gm^2 \tilde V(\tbx)\right]
\tphi(\tbx)
=\gm^{\frac{2\alpha}{2+\alpha}}
\left[ (-\Delta)^{\frac{\alpha}{2}}+\tilde V(\tbx)\right]
\tilde \phi(\tbx),
\eea
where $\bx,\tbx\in {\mathbb R}^n$ and $D=\gm^{-2/(2+\alpha)}$,
which immediately implies that $\tilde \phi(\tbx)$ is an eigenfunction of the operator $(-\Delta)^{\frac{\alpha}{2}}+\tilde V(\tbx)$ with the eigenvalue
$\tilde E=\gm^{-\frac{2\alpha}{2+\alpha}} E$.
\end{proof}

\subsection{Asymptotic results for harmonic potential when $0\le 2-\alpha\ll1$}
Consider a harmonic potential in \eqref{eq:eig} as \eqref{harmold} (or \eqref{harm}). By using the Fourier transform over ${\mathbb R}^n$,
the eigenvalue problem \eqref{eq:eig} can
be reformulated as a standard eigenvalue problem in the phase (or Fourier)
space as, i.e. without the fractional Laplacian operator
\begin{equation}\label{eq:eigFsp}
\left(-\sum_{j=1}^n \gamma_j^2 \frac{\partial^2}{\partial k_j^2} +|\bk|^\alpha\right)\hat\phi(\bk)=E\,\hat\phi(\bk),
\qquad \bk=(k_1,\ldots,k_n)^T\in \mathbb{R}^n,
\end{equation}
where $\hat{\phi}(\bk)$ is the Fourier transform of $\phi(\bx)$ over
the whole space $\mathbb{R}^n$. Introduce
\be
\tilde k_j=\frac{k_j}{\gm_j}, \quad j=1,\ldots,n,\quad
\tilde \phi(\tilde \bk)=\phi(\bk)=\phi(\gm_1\tilde k_1,\ldots,
\gm_n\tilde k_n),  \quad \bk,\tilde \bk\in{\mathbb R}^n,
\ee
then the eigenvalue problem \eqref{eq:eigFsp} can be reformulated as
an eigenvalue with the Laplacian
\begin{equation}\label{eq:eigFspn}
\left(-\Delta +\left(\sum_{j=1}^n \gm_j^2|\tilde k_j|^2\right)^{\alpha/2}\right)\tilde\phi(\tilde \bk)=E\,\tilde\phi(\tilde \bk),
\qquad \tilde\bk=(\tilde k_1,\ldots,\tilde k_n)^T\in \mathbb{R}^n,
\end{equation}
In fact, if $\phi(\bx)\ne 0$ is an eigenfunction of \eqref{eq:eig} corresponding to the eigenvalue $E$, then
$\hat\phi(\bk)\ne0$ is an eigenfunction of \eqref{eq:eigFsp} corresponding
to the same eigenvalue $E$, and $\tilde\phi(\tilde \bk)\ne0$ is an eigenfunction of \eqref{eq:eigFspn} corresponding
to the same eigenvalue $E$. In addition, we have
\bea\label{def:E_har}
E&=&\frac{1}{\int_{\mathbb{R}^n}|\hat{\phi}(\bk)|^2\,d\bk}
\int_{\mathbb{R}^n}\left(\sum_{j=1}^n \gamma_j^2 \left|\frac{\partial\hat\phi(\bk)}{\partial k_j}\right|^2 +|\bk|^\alpha\left|\hat\phi(\bk)\right|^2\right)\,d\bk\nonumber\\
&=&\frac{1}{\int_{\mathbb{R}^n}|\tilde{\phi}(\tilde \bk)|^2\,d\tilde\bk}
\int_{\mathbb{R}^n}\left(|\nabla \tilde \phi(\tilde\bk)|^2+\left(\sum_{j=1}^n \gamma_j^2 |\tilde k_j|^2\right)^{\alpha/2}\left|\tilde\phi(\tilde\bk)\right|^2\right)\,d\tilde\bk.
\eea

\begin{lemma}\label{har1dwhol}
Taken $n=1$ and a harmonic potential $V(x)$  as \eqref{harm}
in \eqref{eq:eig}, when $0\le \varepsilon:=2-\alpha\ll1$, we
have
\be\label{asym:har}
\delta(\alpha)\approx \gm+\frac{\alpha\gm^{\alpha/2}}{\sqrt{\pi}}
\Gamma\left(\frac{1+\alpha}{2}\right).
\ee
\end{lemma}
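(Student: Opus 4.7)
The plan is to adapt to the harmonic potential exactly the perturbation strategy used for the 1D box in the previous lemma: when $0\le\varepsilon:=2-\alpha\ll 1$, I approximate the first two normalized eigenfunctions $\phi_l^{(\alpha)}$ of \eqref{eq:eig} by the corresponding classical ($\alpha=2$) harmonic-oscillator eigenfunctions $\phi_1(x)=(\gm/\pi)^{1/4}e^{-\gm x^2/2}$ and $\phi_2(x)=(\gm/\pi)^{1/4}\sqrt{2\gm}\,x\,e^{-\gm x^2/2}$, which satisfy $E_1(2)=\gm$ and $E_2(2)=3\gm$, and substitute these into the Rayleigh-quotient representation \eqref{Ebd12} to obtain first-order approximations for $E_1(\alpha)$ and $E_2(\alpha)$.

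The potential contribution is then exact and reduces to the standard harmonic-oscillator moments $\langle x^2\rangle_{\phi_1}=1/(2\gm)$ and $\langle x^2\rangle_{\phi_2}=3/(2\gm)$, giving $\int V|\phi_1|^2\,dx=\gm/2$ and $\int V|\phi_2|^2\,dx=3\gm/2$. For the nonlocal kinetic contribution I pass to the Fourier side, using $\int\phi_l(-\Delta)^{\alpha/2}\phi_l\,dx=\frac{1}{2\pi}\int|k|^\alpha|\hat\phi_l(k)|^2\,dk$ together with the explicit Gaussian transforms $|\hat\phi_1(k)|^2=2\sqrt{\pi/\gm}\,e^{-k^2/\gm}$ and $|\hat\phi_2(k)|^2=4\sqrt{\pi}\,\gm^{-3/2}\,k^2 e^{-k^2/\gm}$. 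The remaining integrals collapse to the single moment $\int_{-\infty}^{\infty}|k|^s e^{-k^2/\gm}\,dk=\gm^{(s+1)/2}\Gamma((s+1)/2)$ taken at $s=\alpha$ and $s=\alpha+2$; combined with $\Gamma((\alpha+3)/2)=\tfrac{\alpha+1}{2}\Gamma((\alpha+1)/2)$, this yields $E_1(\alpha)\approx \gm/2+\frac{\gm^{\alpha/2}}{\sqrt\pi}\Gamma((\alpha+1)/2)$ and $E_2(\alpha)\approx 3\gm/2+\frac{(\alpha+1)\gm^{\alpha/2}}{\sqrt\pi}\Gamma((\alpha+1)/2)$, so subtraction gives \eqref{asym:har} at once. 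As a consistency check, at $\alpha=2$ these reduce to the exact values $\gm$ and $3\gm$, so $\delta(2)=2\gm$.

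The only substantive obstacle, exactly as in the box-potential lemma leading to \eqref{asym:box_1d}, is justifying the replacement $\phi_l^{(\alpha)}\mapsto \phi_l$ inside the Rayleigh quotient. Since the first two eigenvalues of $(-\Delta)+\gm^2 x^2$ are simple and well separated, the eigenprojectors of the family $(-\Delta)^{\alpha/2}+\gm^2 x^2$ depend continuously on $\alpha$ near $2$, so $\|\phi_l^{(\alpha)}-\phi_l\|\to 0$ as $\alpha\to 2^-$, and the variational characterization of $E_l(\alpha)$ propagates this into $E_l(\alpha)-E_l^{\rm approx}(\alpha)=o(1)$, which is the sense of the symbol ``$\approx$'' in the statement. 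A quantitative $O((2-\alpha)^2)$ error bound would require analytic perturbation theory for the fractional Laplacian in $\alpha$, but is not needed for the asymptotic assertion and is supported \emph{a posteriori} by the numerics in the following subsection.
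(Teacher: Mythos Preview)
Your proof is correct and follows essentially the same perturbative approach as the paper: approximate $\phi_l^{(\alpha)}$ by the $\alpha=2$ harmonic-oscillator eigenfunctions, pass the kinetic term to Fourier space via the explicit Gaussian transforms, and reduce everything to the moment $\int_{\mathbb R}|k|^s e^{-k^2/\gm}\,dk=\gm^{(s+1)/2}\Gamma((s+1)/2)$ together with the recursion $\Gamma((\alpha+3)/2)=\tfrac{\alpha+1}{2}\Gamma((\alpha+1)/2)$. The only cosmetic difference is that the paper evaluates the potential contribution in Fourier space through \eqref{def:E_har} (where $\gm^2 x^2$ becomes $-\gm^2\partial_k^2$) while you compute it directly in physical space; the resulting approximations $E_1(\alpha)\approx \gm/2+\tfrac{\gm^{\alpha/2}}{\sqrt\pi}\Gamma((\alpha+1)/2)$ and $E_2(\alpha)\approx 3\gm/2+\tfrac{(\alpha+1)\gm^{\alpha/2}}{\sqrt\pi}\Gamma((\alpha+1)/2)$ coincide exactly with the paper's \eqref{E1E2wol}.
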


\smallskip

\begin{proof}
When $n=1$ and $\alpha=2$, the first two smallest eigenvalues and their
corresponding eigenfunctions of the eigenvalue problem \eqref{eq:eig}
with \eqref{harm} can be given as \cite{BC,BL}
\be\label{egwhol987}
\begin{split}
&E_1(2)=\gm, \qquad \phi_1(x)=\left(\frac{\gm}{\pi}\right)^{1/4}e^{-\gm x^2/2}, \qquad
x\in{\mathbb R},\\
&E_2(2)=3\gamma, \qquad \phi_2(x)=\sqrt{2\gm}x\left(\frac{\gm}{\pi}\right)^{1/4}e^{-\gm x^2/2}.\\
\end{split}
\ee
The Fourier transform of $\phi_l(x)$ ($l=1,2$) can be computed as
\be\label{philf98}
\hat{\phi}_1(k)=\frac{\sqrt{2}\pi^{1/4}}{\gm^{1/4}}e^{-\frac{k^2}{2\gm}},
\quad
\hat{\phi}_2(k)=\frac{-2i\pi^{1/4}}{\gm^{3/4}}ke^{-\frac{k^2}{2\gm}},
\qquad k\in{\mathbb R}.
\ee
When $\alpha$ satisfies $0\le 2-\alpha\ll 1$,
the two normalized eigenfunctions $\phi_l^{(\alpha)}(x)$ ($l=1,2$) corresponding to the first two smallest eigenvalues
of \eqref{eq:eig} can be well approximated by $\phi_l(x)$ ($l=1,2$), respectively, i.e.
\be\label{efbda12w}
\phi_1^{(\alpha)}(x)\approx \phi_1(x),
\qquad \phi_2^{(\alpha)}(x)\approx \phi_2(x), \qquad
x\in {\mathbb R}.
\ee
Substituting \eqref{efbda12w} and \eqref{philf98} into \eqref{def:E_har},
we can obtain
the approximations of the first two smallest eigenvalues $E_l(\alpha)$
($l=1,2$) as
\be\label{E1E2wol}
\begin{split}
&E_1{(\alpha)}=E^{(\alpha)}(\phi_1^{(\alpha)})\approx E^{(\alpha)}(\phi_1)= \frac{\gm}{2}+\frac{\gm^{\alpha/2}}{\sqrt{\pi}}\Gamma
\left(\frac{1+\alpha}{2}\right),\\
&E_2{(\alpha)}=E^{(\alpha)}(\phi_2^{(\alpha)})\approx E^{(\alpha)}(\phi_2)=  \frac{3\gm}{2}+\frac{2\gm^{\alpha/2}}{\sqrt{\pi}}
\Gamma\left(\frac{3+\alpha}{2}\right).
\end{split}
\ee
Subtracting the first equation from the second equation in \eqref{E1E2wol}, we get
\bea
\delta(\alpha)&=&E_2{(\alpha)}-E_1{(\alpha)}\approx
\gm+\frac{2\gm^{\alpha/2}}{\sqrt{\pi}}
\Gamma\left(1+\frac{1+\alpha}{2}\right)-\frac{\gm^{\alpha/2}}{\sqrt{\pi}}\Gamma
\left(\frac{1+\alpha}{2}\right)\nonumber\\
&=&\gm+\frac{(1+\alpha)\gm^{\alpha/2}}{\sqrt{\pi}}
\Gamma\left(\frac{1+\alpha}{2}\right)-\frac{\gm^{\alpha/2}}{\sqrt{\pi}}\Gamma
\left(\frac{1+\alpha}{2}\right)=\gm+\frac{\alpha\gm^{\alpha/2}}{\sqrt{\pi}}
\Gamma\left(\frac{1+\alpha}{2}\right).
\eea
The proof is completed.
\end{proof}

Similarly, taken $n=2$ and $1=\gamma\le \eta_2=\eta$ in
\eqref{eq:eig} and \eqref{harm}, when $0\le \varepsilon:=2-\alpha\ll1$,
we get (with details omitted here for brevity)
\begin{align}
\delta(\alpha)&\approx 1-\frac{1}{\pi\sqrt{\eta}}\iint\, (2k_1^2+1)(k_1^2+k_2^2)^{\alpha}e^{-(k_1^2+k_2^2/\eta)}\,dk_1dk_2\nonumber\\
&=1-\frac{\Gamma(-(1+\alpha)/2)\Gamma(1+\alpha/2)}{\sqrt{\pi\eta}\Gamma(-\alpha/2)}\left[-_2F_1(1/2,1+\alpha/2;(3+\alpha)/2;1/\eta)\right.\nonumber\\
&\left.+(2+\alpha)\,\,\,_2F_1(1/2,2+\alpha/2;(3+\alpha)/2;1/\eta)\right]\nonumber\\
&+\frac{\alpha\sqrt{\pi}\eta^{\alpha/2}}{\eta-1}
\left[-\eta\,\,\,_2F_1(-1/2,-\alpha/2;(1-\alpha)/2;1/\eta)
/\Gamma((1-\alpha)/2)\right.\nonumber\\
&\left.+(\eta-1)\,\,\, _2F_1(1/2,-\alpha/2;(1-\alpha)/2;1/\eta)/\Gamma((1-\alpha)/2)
\sec(\alpha\pi)\right].\label{gap:har_2D}
\end{align}

In order to verify the asymptotic results \eqref{asym:har} in 1D and
\eqref{gap:har_2D} in 2D when $0\le 2-\alpha\ll1$, Fig. \ref{fig:gap_har_asym} plots the asymptotic results and numerical results
of the fundamental gap $\delta(\alpha)$ of the FSO \eqref{eq:eig} when
$0\le 2-\alpha\ll 1$. The results indicate
that our asymptotic results are quite accurate in the regime  $0\le 2-\alpha\ll1$ (cf. Fig. \ref{fig:gap_har_asym}). In addition,
we cannot get a lower bound of the fundamental gap
$\delta(\alpha)$ from the asymptotic results!

\begin{figure}[htbp]
\centerline{\psfig{figure=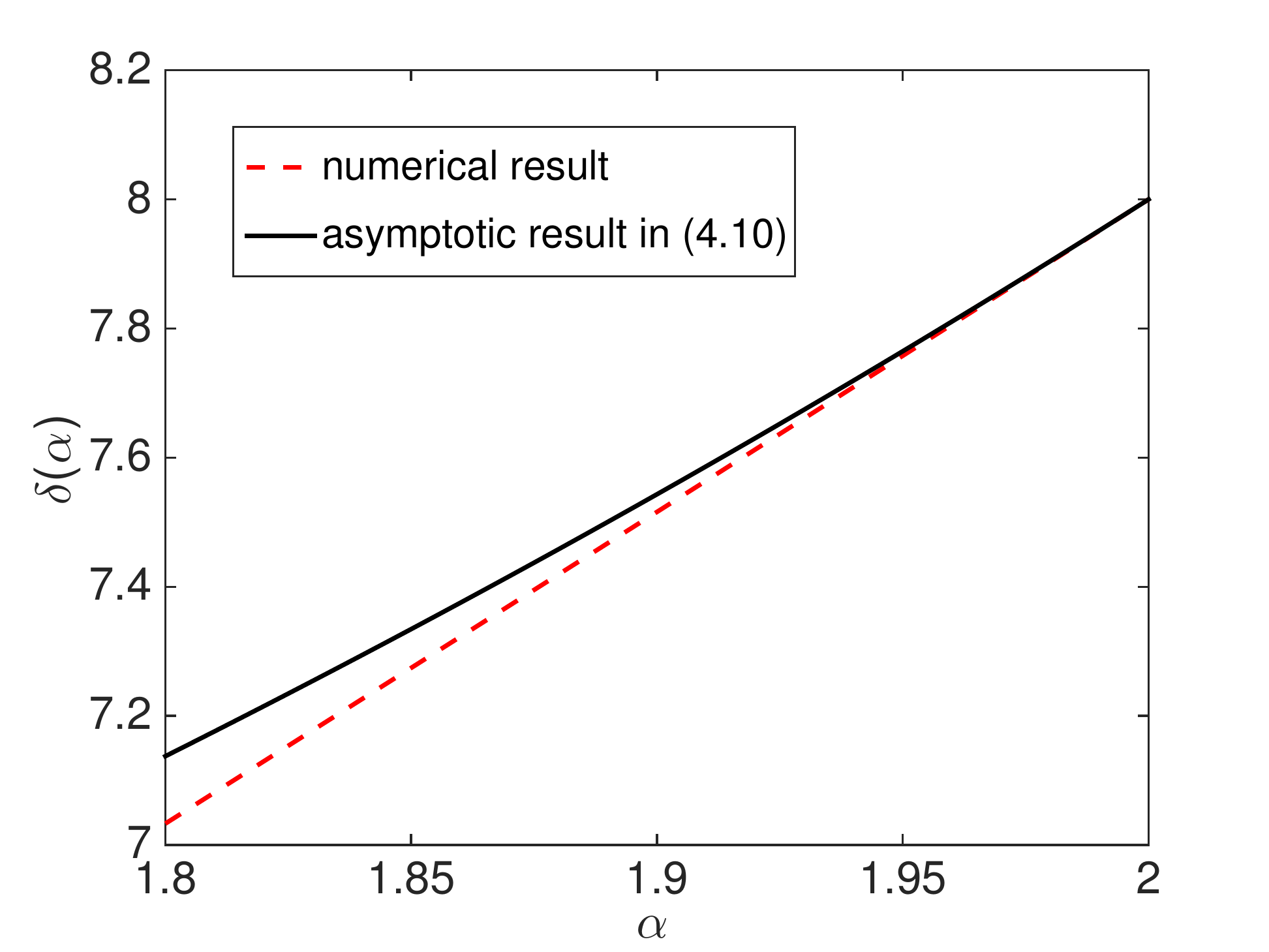,height=6cm,width=7cm,angle=0}
\psfig{figure=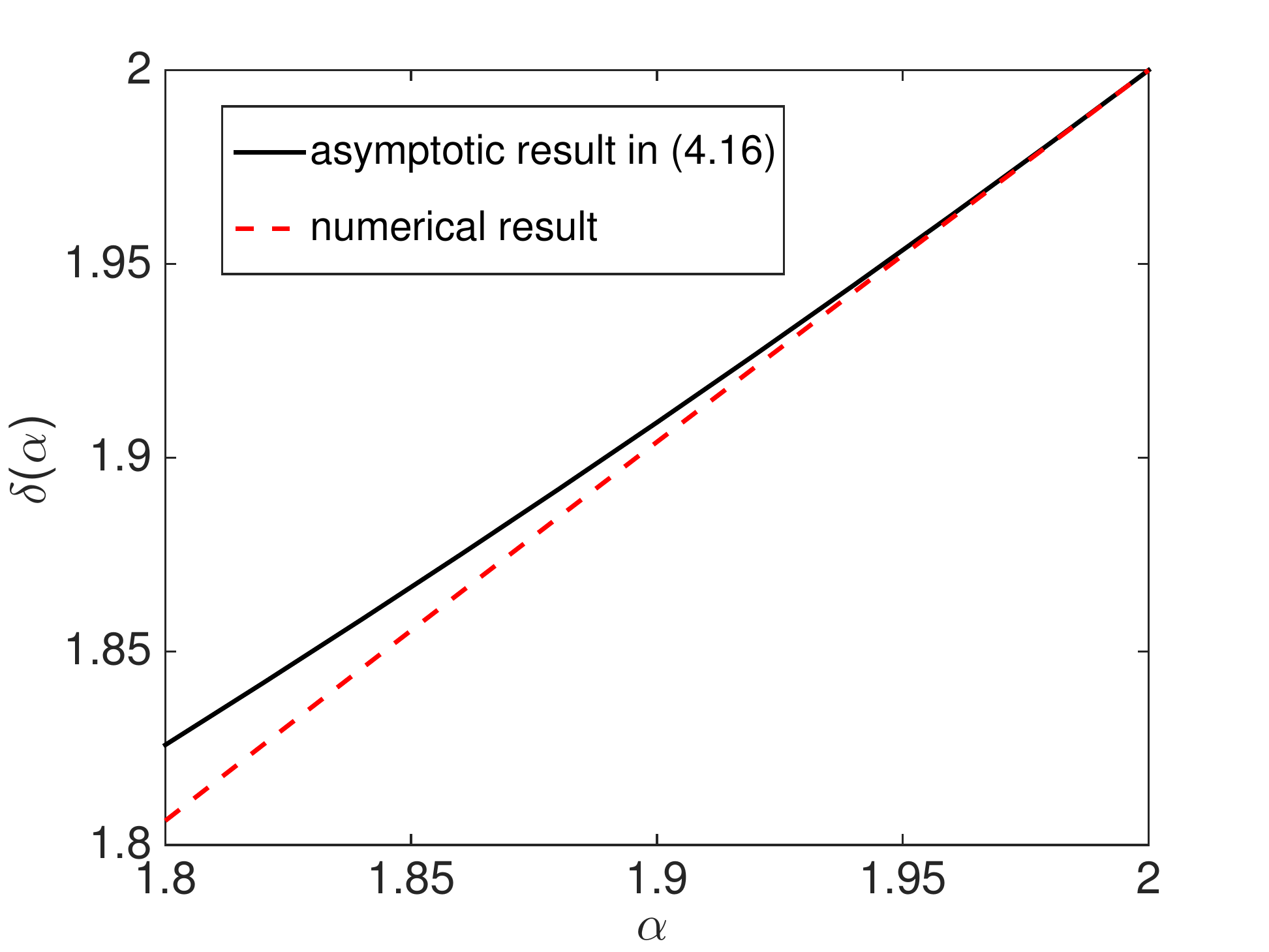,height=6cm,width=7cm,angle=0}}
\caption{Comparison of the asymptotic results in \eqref{asym:har}
or \eqref{gap:har_2D} (solid line) and
numerical results (dash lines)
for the fundamental gap $\delta(\alpha)$ of
the FSO  \eqref{eq:eig} with $n=1$ and
$V(x)=16x^2$ (left) and with $n=2$ and $V(x,y)=x^2+16y^2$ (right).}
\label{fig:gap_har_asym}
\end{figure}

\subsection{A formal lower bound on the fundamental gap in 2D}
In order to get a lower bound of the fundamental gap $\delta(\alpha)$
of the FSO \eqref{eq:eig}, we take $n=2$ and
$V(x,y)=x^2+\eta^2 y^2$ with $\eta\ge1$ in \eqref{eq:eig} and
consider the following eigenvalue problem
\be\label{2dex1}
\left[(-\Delta)^{\frac{\alpha}{2}}+(x^2+\eta^2y^2)\right]\phi(\bx)= E\,\phi(\bx), \qquad \bx=(x,y)^T\in{\mathbb R}^2.
\ee
When $\alpha=2$, the first two smallest eigenvalues of
\eqref{2dex1} are \cite{BC,BL}
\be\label{eig2dex1}
E_1:=E_1(2)=1+\eta, \qquad E_2:=E_2(2)=3+\eta, \qquad \eta\ge1.
\ee

Motivated by the methods and results in the previous two sections,
we assume that the lower bound of the fundamental gap
might depend on the parameter $\eta$ -- the anisotropy
of the harmonic potential.
Similar to the case of the local FSO,
i.e. finding the lower bound of the fundamental gap by estimating
$\lmd_2^{\alpha/2}-\lmd_1^{\alpha/2}$ with $\lmd_1$ and $\lmd_2$ being
the first two smallest eigenvalues of the corresponding
operator when $\alpha=2$, we formally assume that
the fundamental gap $\tilde \delta(\alpha)$ of \eqref{2dex1}
has a similar estimate as
\be\label{proof:har}
\tilde \delta(\alpha)\ge E_2^\beta -E_1^\beta=(3+\eta)^{\beta}-(1+\eta)^{\beta},
\ee
where $0<\beta\le 1$ is
to be determined in an asymptotic way by considering $\eta\to+\infty$.
When $\eta\gg1$, the eigenfunction of \eqref{2dex1}
varies extremely slow in the $x$-direction.
As a result, the problem \eqref{2dex1} can be formally
well approximated by
\be
\left[(-\partial_{yy})^{\frac{\alpha}{2}}+\eta^2y^2\right]u(y)=E\, u(y),
\qquad y\in{\mathbb R},
\ee
The scaling property in Lemma \ref{lem:scalew} implies that $E\sim \mathcal{O}(\eta^{2\alpha/(2+\alpha)})$, which indicates that  one reasonable choice of $\beta$ is
\be\label{beta865}
\beta=\frac{2\alpha}{2+\alpha}.
\ee

\begin{figure}[htbp]
\centerline{\psfig{figure=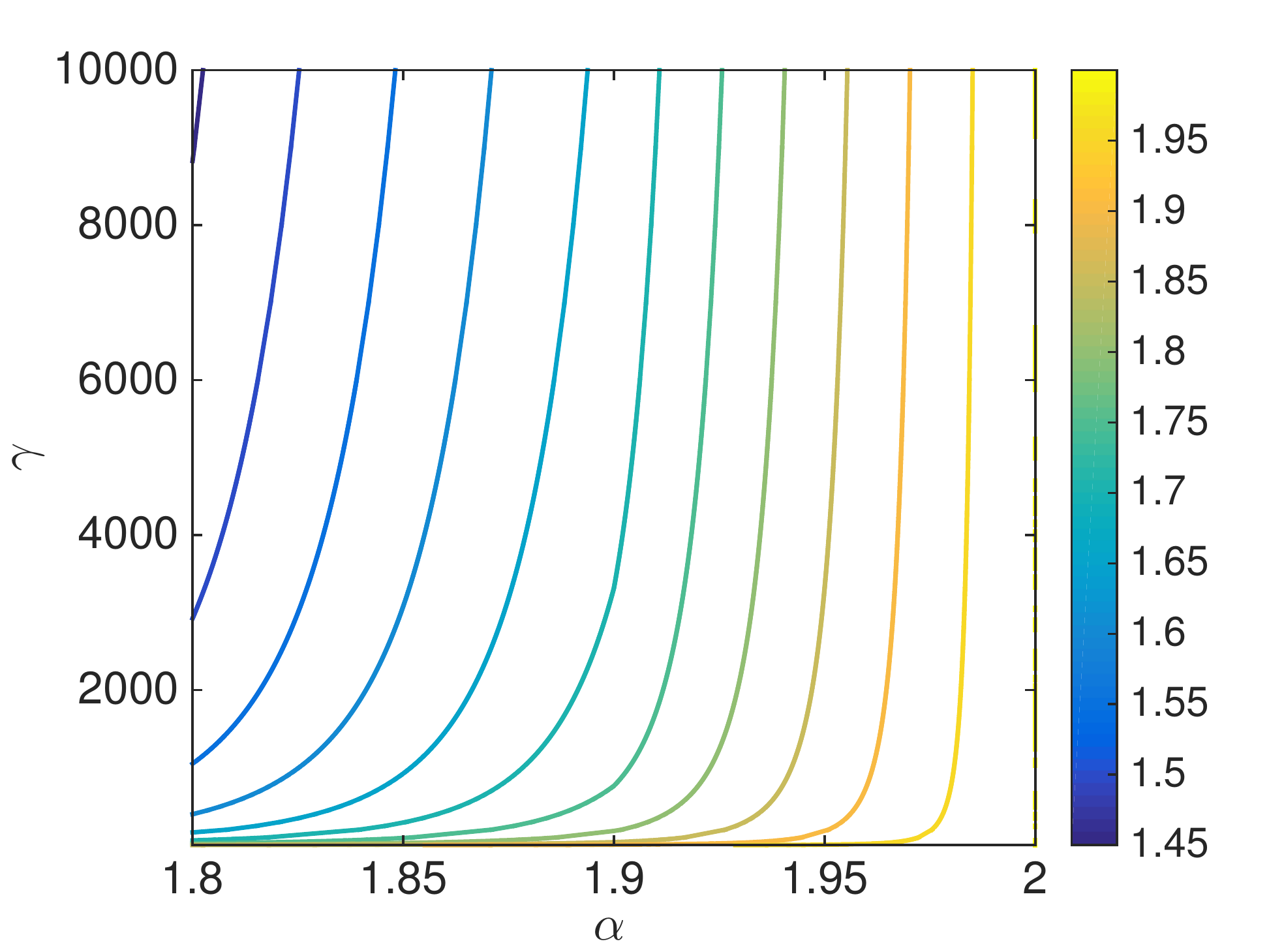,height=6cm,width=7cm,angle=0}
\psfig{figure=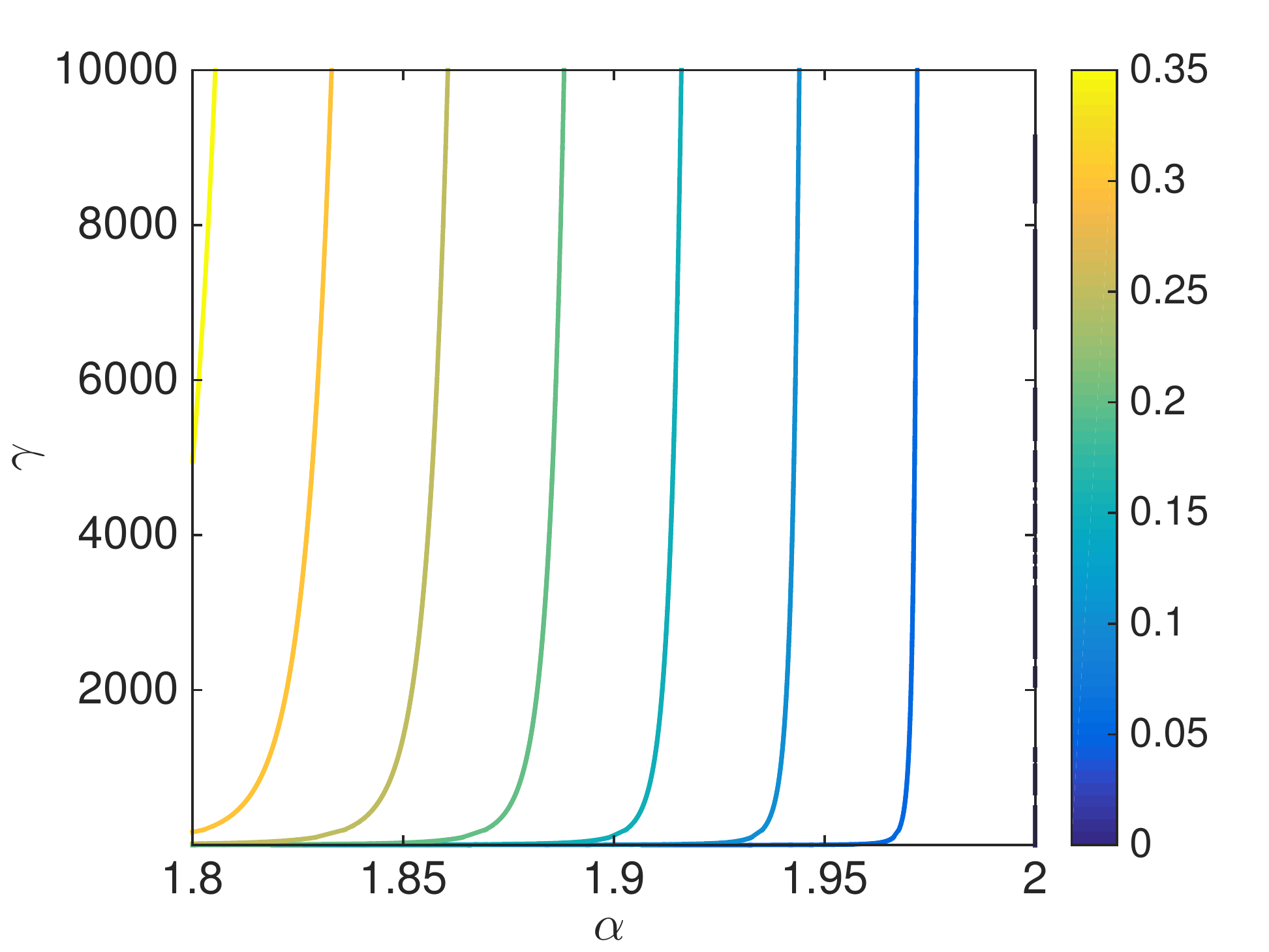,height=6cm,width=7cm,angle=0}}
\caption{Comparison of the asymptotic results in
\eqref{gap:har_2D} (left) and the lower bound in \eqref{lbdtl} (right)
for the fundamental gap $\tilde \delta(\alpha)$ of
the FSO  \eqref{2dex1} for different $\alpha$ and $\eta\ge1$.}
\label{fig:gap_har_2D_asym_first}
\end{figure}

When $\eta\ge1$, we have
\[
\tilde \delta(\alpha)\ge\eta^\beta
\left[\left(1+\frac{3}{\eta}\right)^\beta-
\left(1+\frac{1}{\eta}\right)^\beta\right]
=\eta^\beta \, \beta \, \frac{1}{(1+\xi)^{1-\beta}}\, \frac{2}{\eta}
= \frac{2\beta \eta^{\beta-1}}{(1+\xi)^{1-\beta}},
\]
where $\xi\in [1/\eta,3/\eta]\subset (0,3]$. Noting that $\frac{1}{(1+\xi)^{1-\beta}}$ is a decreasing function when $\xi\ge0$ and
taking $\xi=3$, we get
\be \label{dt985}
\tilde \delta(\alpha)\ge2\beta \eta^{\beta-1}\frac{1}{4^{1-\beta}}=
\frac{4^\beta \beta }{2}\eta^{\beta-1}.
\ee
Plugging \eqref{beta865} into \eqref{dt985}, we obtain a
lower bound
\be\label{lbdtl}
\tilde \delta(\alpha)\ge2^{\frac{4\alpha}{2+\alpha}}\,
\frac{ \alpha }{2+\alpha}\;
\frac{1}{\eta^{\frac{2-\alpha}{2+\alpha}}}.
\ee

To compare the asymptotic results
\eqref{gap:har_2D} in 2D and the formal lower bound
in \eqref{lbdtl} for the fundamental gap $\tilde \delta(\alpha)$
of the FSO \eqref{2dex1},  Fig. \ref{fig:gap_har_2D_asym_first} shows the contour
plot of \eqref{gap:har_2D} and the lower bound in \eqref{lbdtl}
for different $\eta\ge1$ and $\alpha$. It shows that (i)
the asymptotic results in
\eqref{gap:har_2D}  degenerates to $0$
when either $\alpha\to0^+$   or $\eta\to +\infty$ (cf. Fig. \ref{fig:gap_har_2D_asym_first} (left)), and (ii)
the lower bound in \eqref{lbdtl}
does show the effect of the parameter $\eta\ge1$
properly since the contour line is almost
vertical when  $\eta\gg1$.

\subsection{Numerical results for general potentials}
Combining \eqref{lbdtl} and the scaling property in Lemma
  \ref{lem:scalew}, noting \eqref{eq:eigFsp} and \eqref{eq:eig} with \eqref{harm}, we can formally obtain a
  lower bound of the fundamental gap $\delta(\alpha)$ of the FSO \eqref{eq:eig} with \eqref{harm}
  \be\label{bdw876}
  \delta(\alpha)=\gm^{\frac{2\alpha}{2+\alpha}}\; \tdelta(\alpha)
  \ge 2^{\frac{4\alpha}{2+\alpha}}\,
\frac{ \alpha }{2+\alpha}\;
\frac{\gm^{\frac{2\alpha}{2+\alpha}}}{\eta^{\frac{2-\alpha}{2+\alpha}}}.
  \ee
To verify numerically the lower bound in \eqref{bdw876}, Fig. \ref{fig:gap_har_2D} shows numerical results of the fundamental gap
$\delta(\alpha)$ of \eqref{eq:eig} with \eqref{harm}.

\begin{figure}[htbp]
\centerline{\psfig{figure=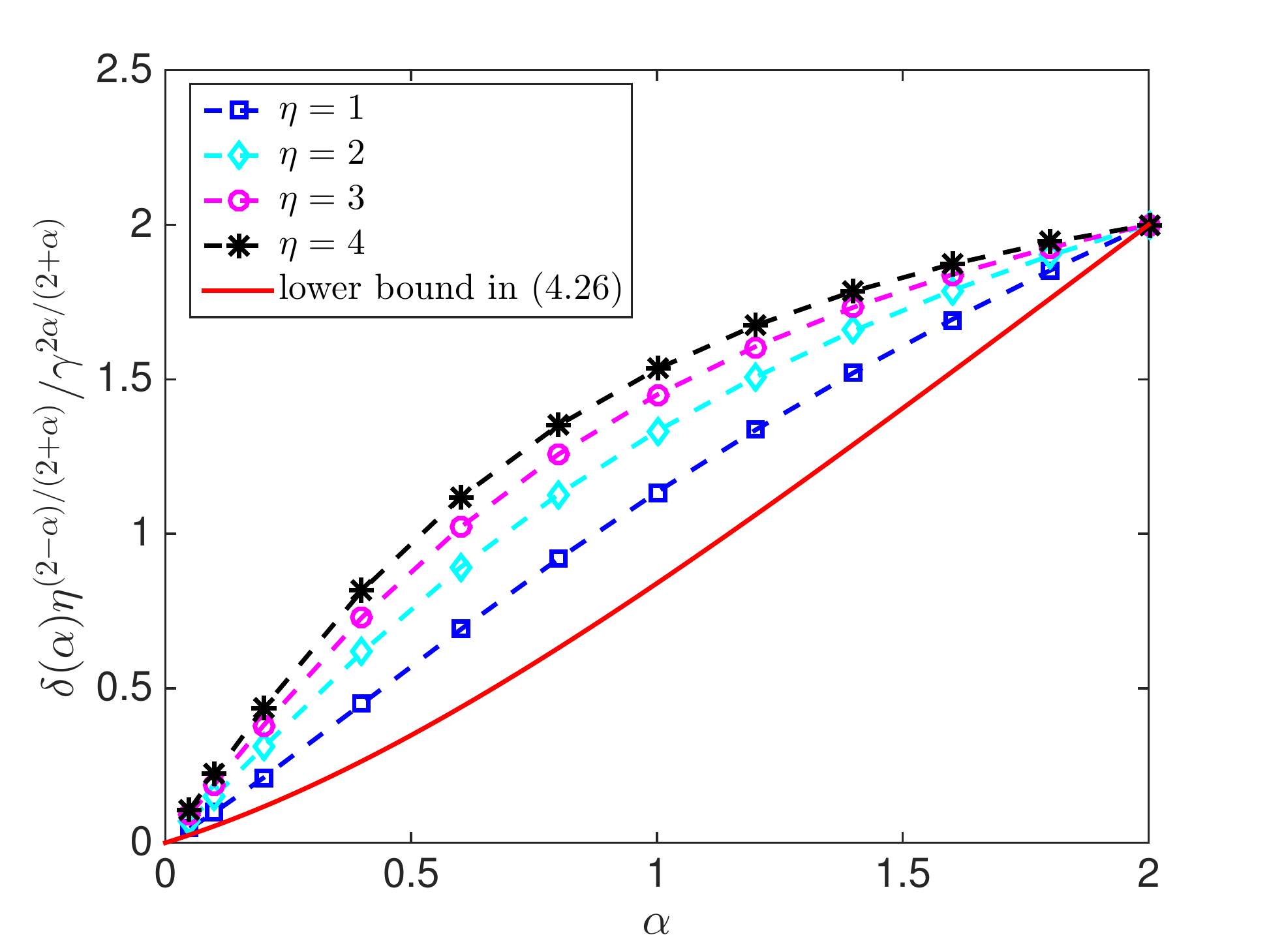,height=6cm,width=10cm,angle=0}}
\caption{Comparison of the lower bound in \eqref{bdw876} (solid line) and
numerical results (dash lines)  for the fundamental gap $\delta(\alpha)$ of the FSO \eqref{eq:eig} with $V(x,y)=\gm^2(x^2+\eta^2y^2)$ for different $\eta\ge1$ and $\gm>0$. }
\label{fig:gap_har_2D}
\end{figure}

Furthermore, to check numerically whether the lower bound
in \eqref{bdw876} is still valid for \eqref{eq:eig} with general convex harmonic-type potentials, Fig. \ref{fig:gap_har_2D_general}
shows numerical results of the fundamental gap
$\delta(\alpha)$ of \eqref{eq:eig} with different potentials taken as
Case I: $V(x,y)=2x^2 + 20y^2+\cos(x)+2\sin(2y)$ with $\gm=\sqrt{6}/2$ and $\eta=4$; and Case II: $V(x,y)=x^2+100y^2+\cos(x)+10\sin(2y)$ with
$\gm=\sqrt{2}/2$ and $\eta=4\sqrt{15}$.

\begin{figure}[htbp]
\centerline{\psfig{figure=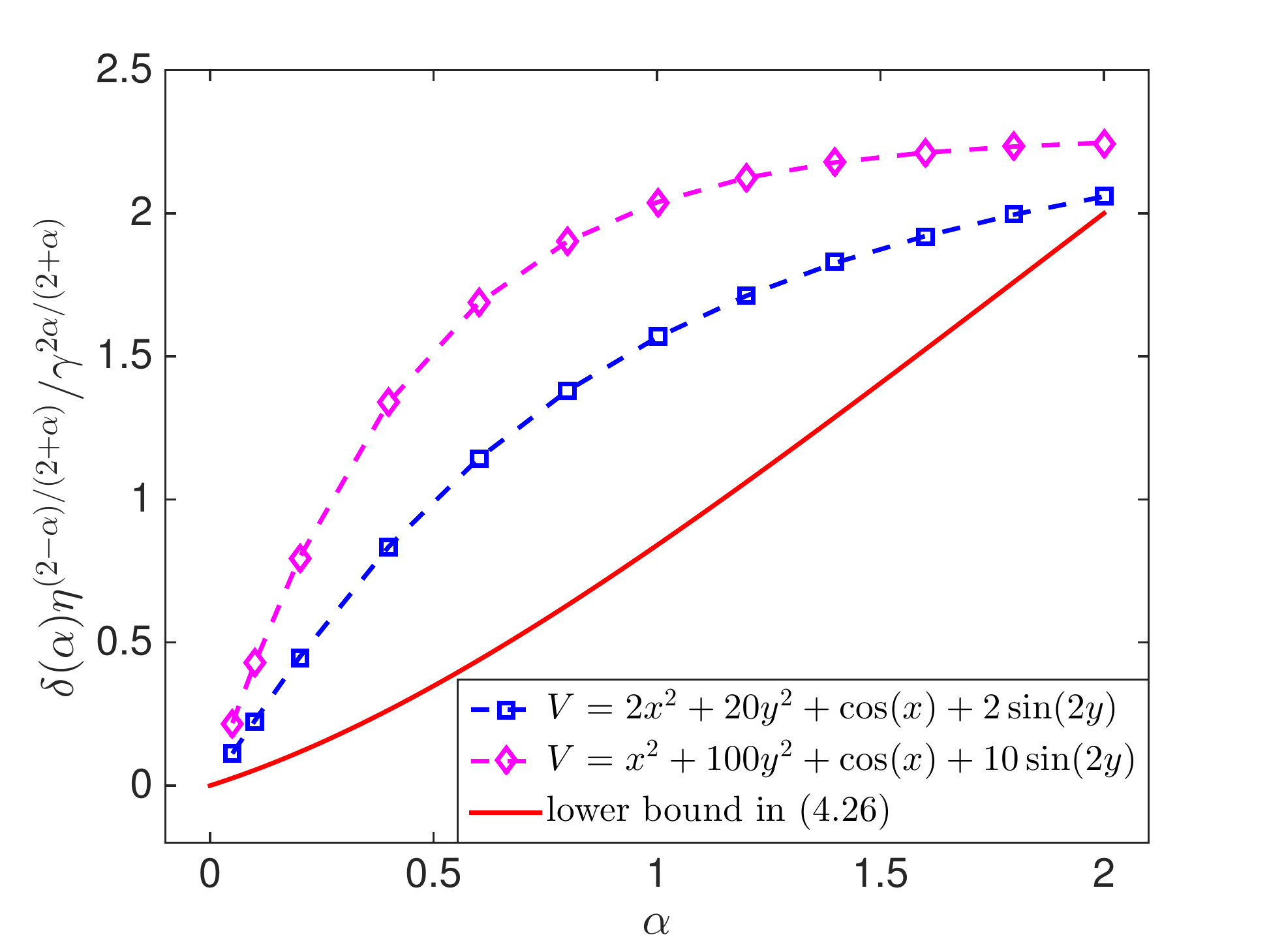,height=6cm,width=10cm,angle=0}}
\caption{Comparison of the lower bound in \eqref{bdw876} (solid line) and
numerical results (dash lines)  for the fundamental gap $\delta(\alpha)$ of the FSO \eqref{eq:eig} with different potentials $V(\bx)$. }
\label{fig:gap_har_2D_general}
\end{figure}

\bigskip

Based on the asymptotic results and numerical results in this section,
as well as extensive  numerical results which draw similar conclusion and thus are not shown here for
brevity, we can formulate the following:

\smallskip
\textbf{Gap Conjecture II} (For the FSO
\eqref{eq:eig} in the whole space). Assume
the potential $V(\mathbf{x})\in C^2({\mathbb R}^n)$
in \eqref{eq:eig} satisfies
\be\label{cond:V}
  \gm_1^2I_n\le \frac{1}{2}D^2V(\bx)\le \gm_2^2I_n, \qquad
  \bx\in{\mathbb R}^n,
  \ee
where $0<\gm_1\le \gm_2$ are two positive constants and $I_n$ is the
$n\times n$ identity matrix. Denote $\gm=\gm_1$ and set $\eta=\gm_2/\gm_1\ge1$, then the fundamental gap $\delta(\alpha)$ of
the FSO \eqref{eq:eig} can be bounded below by
 \be\label{gap_conj:har_2D}
\delta(\alpha)\ge2^{\frac{4\alpha}{2+\alpha}}\frac{\alpha}
{2+\alpha}\frac{\gm^{\frac{2\alpha}{2+\alpha}}}
{\eta^{\frac{2-\alpha}{2+\alpha}}}=2^{\frac{4\alpha}{2+\alpha}}\frac{\alpha}
{2+\alpha}\frac{\gm_1}{
\gm_2^{\frac{2-\alpha}{2+\alpha}}},\qquad 0<\alpha\le 2.
\ee

\subsection{Numerical results for well potential}
Consider a well potential in \eqref{eq:eig}
 \begin{gather}\label{def:well}
V(\bx)=
\begin{cases}
0,& \text{for } \bx\in\Omega,\\
V_0,& \text{for } \bx\in\Omega^c,
\end{cases}
\end{gather}
for some $V_0>0$. We solve \eqref{eq:eig} with \eqref{def:well} numerically
and compare the solutions with those in \eqref{eq:eigfso} and/or \eqref{eq:eigl} by letting
$V_0\to+\infty$.
Denote $0<E_1^{V_0}<E_2^{V_0}<\ldots$ be the eigenvalues of
\eqref{eq:eig} with \eqref{def:well} and $\phi_1^{V_0}(x)$, $\phi_2^{V_0}(x)$, $\ldots$
be the corresponding eigenfunctions.
Similarly, denote $0<\lmd_1<\lmd_2<\ldots$ be the eigenvalues of
\eqref{eq:eigfso} and $\phi_1(x)$, $\phi_2(x)$, $\ldots$
be the corresponding eigenfunctions; and denote
$0<\tilde \lmd_1<\tilde \lmd_2<\ldots$ be the eigenvalues of
\eqref{eq:eigl} and $\tilde \phi_1(x)$, $\tilde \phi_2(x)$, $\ldots$
be the corresponding eigenfunctions. All the solutions are obtained numerically.

 Fig. \ref{fig:numeric_box} shows $|E_1^{V_0}-\lmd_1|$ and
 $|E_1^{V_0}-\tilde \lmd_1|$ for different
 $0<\alpha\le 2$ and $V_0>0$. Similarly, Fig. \ref{fig:numeric_box1} shows $|\phi_1^{V_0}(x)-\phi_1(x)|$ and
 $|\phi_1^{V_0}(x)-\tilde \phi_1(x)|$ for $\alpha=1.5$ and different $V_0>0$.
Numerical comparisons were also performed for other eigenvalues and their
corresponding eigenfunctions, which draw similar conclusion and thus are not shown here for brevity.

\begin{figure}[htbp]
\centerline{\psfig{figure=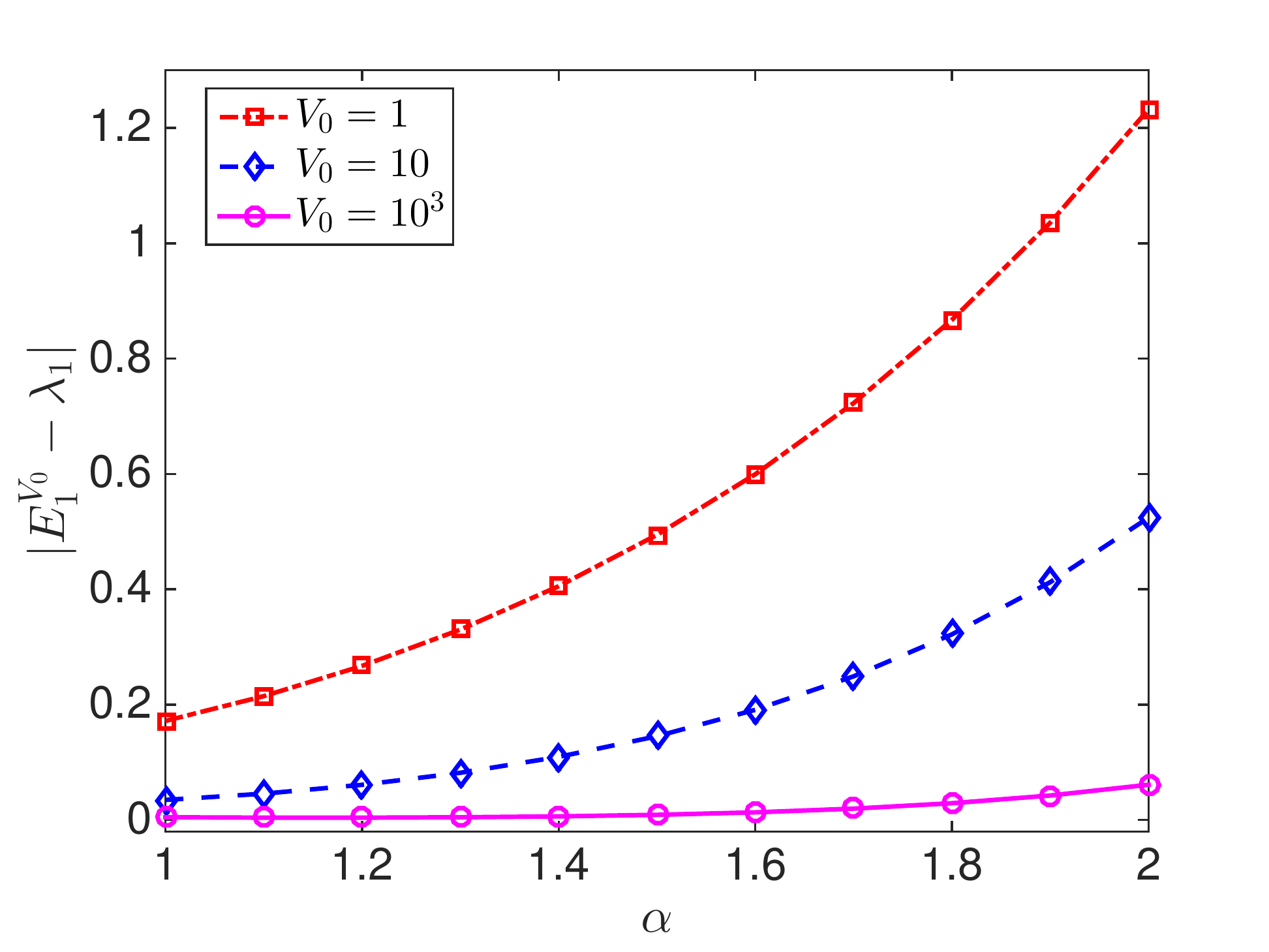,height=6cm,width=7cm,angle=0}
\psfig{figure=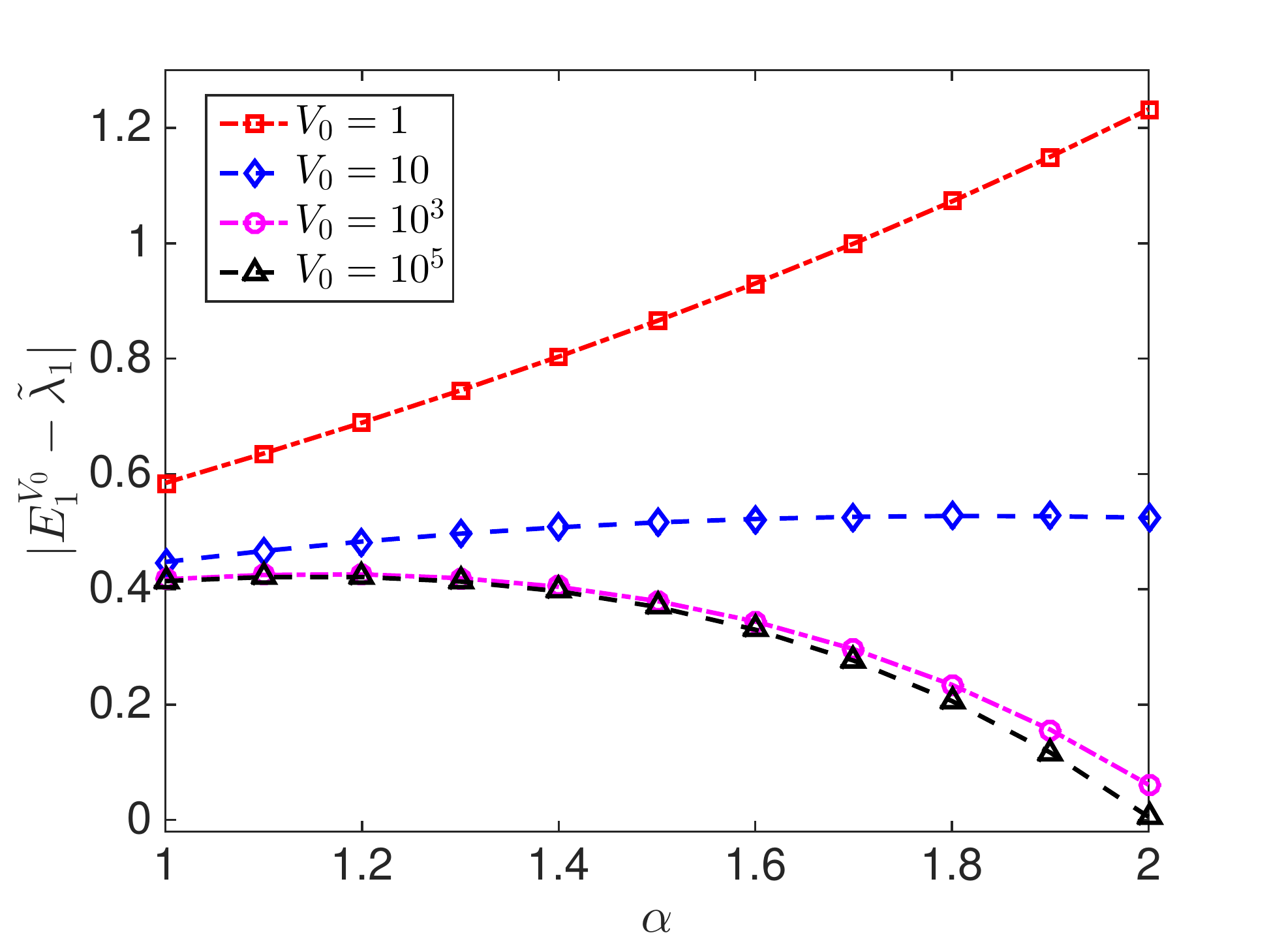,height=6cm,width=7cm,angle=0}}
\caption{Comparison of eigenvalues of
\eqref{eq:eig} with a box potential \eqref{def:well} and those of
\eqref{eq:eigfso} and/or \eqref{eq:eigl} for different $\alpha$ and $V_0$.}
\label{fig:numeric_box}
\end{figure}

\begin{figure}[htbp]
\centerline{\psfig{figure=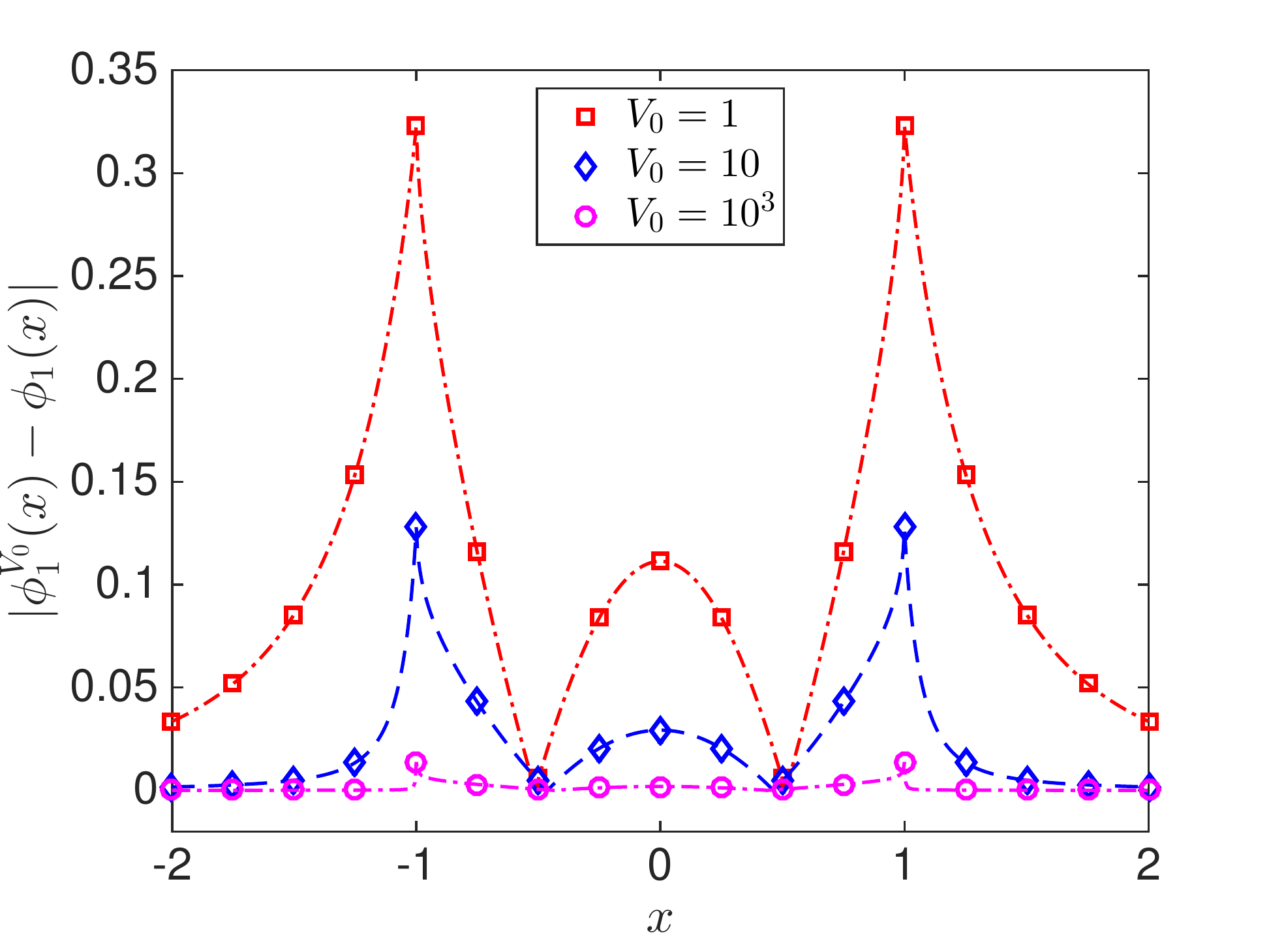,height=6cm,width=7cm,angle=0}
\psfig{figure=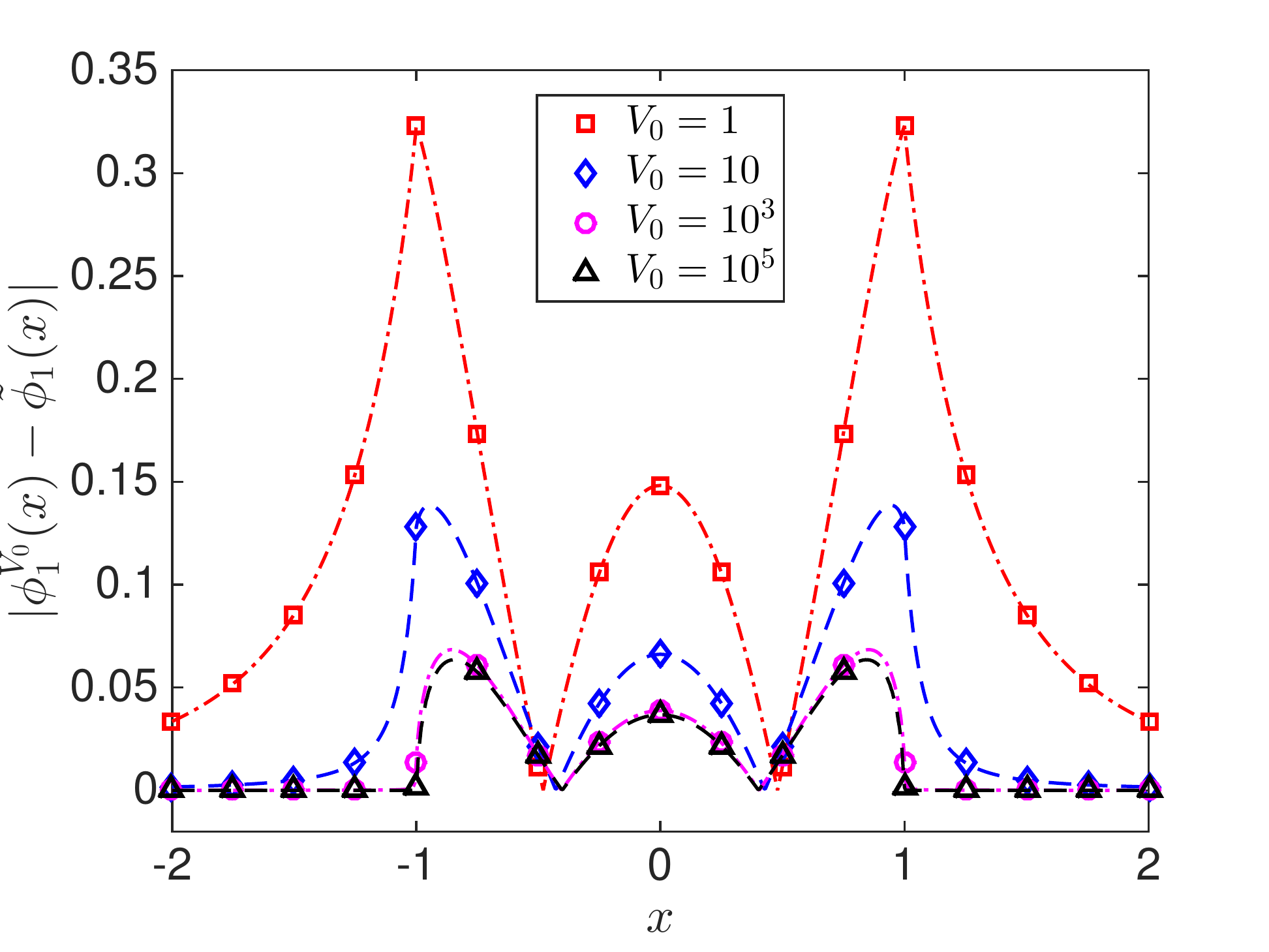,height=6cm,width=7cm,angle=0}}
\caption{Comparison of eigenfunctions of
\eqref{eq:eig} with a box potential \eqref{def:well} and those of
\eqref{eq:eigfso} and/or \eqref{eq:eigl} for $\alpha=1.5$ and different
$V_0$.}
\label{fig:numeric_box1}
\end{figure}

From Figs. \ref{fig:numeric_box}\&\ref{fig:numeric_box1} and additional numerical results which draw similar conclusion and thus are not shown here for brevity,
when $\alpha=2$, the eigenvalues and their corresponding eigenfunctions
of  \eqref{eq:eig} with \eqref{def:well} converge to those
of \eqref{eq:eigfso} and \eqref{eq:eigl} when $V_0\to+\infty$.
However, when $0<\alpha<2$, the eigenvalues and their corresponding eigenfunctions
of  \eqref{eq:eig} with \eqref{def:well} converge to those
of \eqref{eq:eigfso} when $V_0\to+\infty$, and they don't converge
to those of \eqref{eq:eigl}!

\section{The fundamental gaps of the FSO \eqref{eq:fso} on bounded domains with periodic boundary conditions}\label{sec:periodic}
Take $\Omega=\prod_{j=1}^n(0,L_j)$ and $V(\bx)$ be a periodic function
with respect to $\Omega$ in \eqref{eq:eig}.
Without loss of generality, we assume $L_1\ge L_2 \ge \ldots \ge L_n>0$
and $V_\bog(\bx):=V(\bx)|_\Omega \ge0$.
In this case, \eqref{eq:eig} can be reduced to
\begin{equation}\label{eq:eigper}
\begin{split}
&L_{\rm Per}\,\phi(\bx):=\left[(-\Delta)^{\frac{\alpha}{2}}
+V_\bog(\bx)\right]\phi(\bx)=
\lmd\,\phi(\bx),\qquad \bx\in \Omega,\\
&\left.\phi(\bx)\right|_{\partial \Omega} \ \hbox{is periodic}.
\end{split}
\end{equation}
In this case, the two definitions of the fractional Laplacian operator \eqref{def:FL_I} and \eqref{lfl11} are equivalent for $0<\alpha\le 2$ \cite{Roncal,Roncal2}. Let $0<\lmd_1:=\lmd_1(\alpha)<\lmd_2:=\lmd_2(\alpha)$ be the first two smallest positive eigenvalues
of \eqref{eq:eigper}, then the fundamental gap of \eqref{eq:eigper}
is denoted as:
\be\label{gap1dpd}
\delta_{\rm per}(\alpha):=\lmd_2(\alpha)-\lmd_1(\alpha), \qquad 0<\alpha\le 2.
\ee

  Similar to proof of Lemmas \ref{lem:scale_loc}\&\ref{lem:scalefso},
we can obtain
the following scaling property (the proof is omitted here for brevity).

\begin{lemma}\label{lem:scaleper}
Let $\lambda$ be an eigenvalue of \eqref{eq:eigper} and $\phi:=\phi(\bx)$ is the corresponding eigenfunction, under the transformation \eqref{scaxp}, then $\tilde \lambda =D^\alpha \lambda$
 and  $\tilde \phi:=\tilde \phi(\tbx) = \phi(D\tbx)$
are the eigenvalue and the  corresponding eigenfunction
of the following eigenvalue problem
\begin{equation}\label{eq:eigpernew}
\begin{split}
&\tilde L_{\rm Per}\,\tilde\phi(\tbx):=\left[(-\Delta)^{\frac{\alpha}{2}}
+\tilde V_\botg(\tbx)\right]\tilde\phi(\tbx)=
\tilde \lmd\,\tilde \phi(\tbx),\qquad \tbx\in \tilde \Omega,\\
&\left.\tilde \phi(\tbx)\right|_{\partial \tilde \Omega} \ \hbox{is periodic}
\end{split}
\end{equation}
which immediately imply the scaling property on the fundamental gap $\delta_{\rm per}(\alpha)$  of \eqref{eq:eigper} as
\be
\delta_{\rm per}(\alpha) = \frac{\tdelta_{\rm per}(\alpha)}{D^{\alpha}}, \qquad
0<\alpha\le 2,
\ee
where $\tdelta_{\rm per}(\alpha)$ is the fundamental gap of
\eqref{eq:eigpernew} with the diameter of $\tilde \Omega$ as $1$.
\end{lemma}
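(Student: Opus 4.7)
The proof will closely parallel those of Lemmas \ref{lem:scale_loc} and \ref{lem:scalefso}, so the plan is essentially to transplant one of those arguments to the periodic setting. Since, by the references cited just before the lemma, the two definitions \eqref{def:FL_I} and \eqref{lfl11} agree for periodic functions when $0<\alpha\le 2$, we are free to use whichever representation is more convenient; the integral definition \eqref{def:FL_I} (viewed on the torus via periodic extension of $\phi$ from $\Omega$ to all of ${\mathbb R}^n$) matches the proof style of Lemma \ref{lem:scalefso} most closely.

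First I would set $\tbx=\bx/D$ and $\tphi(\tbx)=\phi(D\tbx)$, so that $\tphi$ is defined on $\tilde\Omega=\{\tbx\;|\;D\tbx\in\Omega\}$. The domain $\tilde\Omega$ inherits the product structure of $\Omega$ with all side lengths divided by $D$, so periodic boundary conditions for $\phi$ on $\partial\Omega$ translate directly into periodic boundary conditions for $\tphi$ on $\partial\tilde\Omega$. Next, exactly as in \eqref{dtapsc}, I would change variables $\by=D\tby$ in the principal-value integral
\be
(-\Delta)^{\alpha/2}\phi(\bx)=C_{n,\alpha}\int_{{\mathbb R}^n}\frac{\phi(\bx)-\phi(\by)}{|\bx-\by|^{n+\alpha}}\,d\by,
\ee
yielding a factor $D^n$ from $d\by$ and $D^{-(n+\alpha)}$ from the denominator, so that
\be
(-\Delta)^{\alpha/2}\phi(\bx)=D^{-\alpha}(-\Delta)^{\alpha/2}\tphi(\tbx),\qquad \bx\in\Omega,\ \tbx\in\tilde\Omega.
\ee
Substituting into \eqref{eq:eigper} and using $V_\bog(\bx)=V_\bog(D\tbx)$ together with the definition $\tilde V_\botg(\tbx)=D^\alpha V_\bog(D\tbx)$ from \eqref{scaxp}, I obtain
\be
\lmd\,\tphi(\tbx)=D^{-\alpha}\left[(-\Delta)^{\alpha/2}+\tilde V_\botg(\tbx)\right]\tphi(\tbx),
\ee
which is precisely \eqref{eq:eigpernew} with $\tilde\lmd=D^\alpha\lmd$.

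Finally, applying this correspondence to the first two smallest eigenvalues $\lmd_1(\alpha)<\lmd_2(\alpha)$ and subtracting gives $\tdelta_{\rm per}(\alpha)=D^\alpha\delta_{\rm per}(\alpha)$, i.e.\ the claimed scaling of the fundamental gap.

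There is no real obstacle here: the only subtle point is making sure the integral definition \eqref{def:FL_I} is the correct one to use under periodic boundary conditions, which is justified by the equivalence result in \cite{Roncal,Roncal2} quoted just above the lemma. Alternatively, one can run the argument on the Fourier-series side, replacing the eigenfunctions $u_\bm$ of $-\Delta$ with the plane waves $e^{2\pi i\bk\cdot\bx/L_j}$ and reusing the spectral computation \eqref{Aaph11} verbatim; this is purely a matter of taste.
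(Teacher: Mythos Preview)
Your proposal is correct and is exactly in the spirit of what the paper intends: the paper does not write out a proof at all, merely stating that it is ``similar to proof of Lemmas \ref{lem:scale_loc}\&\ref{lem:scalefso}'' and omitting the details. Your argument, which transplants the change-of-variables computation \eqref{dtapsc} to the periodic setting and invokes the equivalence of \eqref{def:FL_I} and \eqref{lfl11} on the torus from \cite{Roncal,Roncal2}, is precisely the omitted argument.
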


\smallskip

\begin{lemma}\label{per1d}
Take $n=1$ and $V_\bog(x)\equiv 0$ in \eqref{eq:eigper}, then we have
\be\label{gap1dp789}
\delta_{\rm per}(\alpha)=\frac{(2\pi)^{\alpha}(2^{\alpha}-1)}{L_1^{\alpha}},
\qquad 0<\alpha\le 2.
\ee
\end{lemma}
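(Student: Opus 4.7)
The plan is to diagonalize the fractional Laplacian explicitly in the Fourier basis adapted to the periodic box, then read off the first two positive eigenvalues and subtract.

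First I would set up the complete orthonormal family $u_k(x) = L_1^{-1/2} e^{i 2\pi k x/L_1}$, $k\in\mathbb Z$, which is the standard basis of $L^2(0,L_1)$ compatible with periodic boundary data. These are eigenfunctions of $-\Delta$ with eigenvalues $\mu_k = (2\pi k/L_1)^2$. Next I would invoke the equivalence (cited as \cite{Roncal,Roncal2} just above the statement) of the spectral definition \eqref{lfl11} of the fractional Laplacian and the classical definition \eqref{def:FL_F}/\eqref{def:FL_I} on the torus. Either viewpoint immediately gives
\begin{equation*}
(-\Delta)^{\alpha/2} u_k(x) = \mu_k^{\alpha/2}\, u_k(x) = \left(\frac{2\pi |k|}{L_1}\right)^{\alpha} u_k(x),
\end{equation*}
so the spectrum of $L_{\rm Per}$ with $V_{\Omega}\equiv 0$ is the discrete set $\{(2\pi|k|/L_1)^\alpha : k\in\mathbb Z\}$ (each $k\ne 0$ contributing a doubly-degenerate eigenvalue coming from $\pm k$).

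Then I would sort the positive eigenvalues in increasing order: the smallest positive eigenvalue corresponds to $|k|=1$ and the second smallest to $|k|=2$, yielding
\begin{equation*}
\lambda_1(\alpha) = \left(\frac{2\pi}{L_1}\right)^{\alpha}, \qquad \lambda_2(\alpha) = \left(\frac{4\pi}{L_1}\right)^{\alpha}.
\end{equation*}
Finally, substituting into the definition \eqref{gap1dpd} of $\delta_{\rm per}(\alpha)$ and factoring out $(2\pi/L_1)^\alpha$ gives the claimed identity \eqref{gap1dp789}.

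There is essentially no hard step here: the only thing that needs care is justifying that the smallest positive eigenvalue really is $(2\pi/L_1)^\alpha$ with no other candidate wedged between $0$ and it, which follows because the constant function $u_0\equiv L_1^{-1/2}$ is the unique (up to scaling) eigenfunction for eigenvalue $0$ and the remaining eigenvalues come in the explicit monotone family $|k|=1,2,3,\ldots$. The potential subtlety of comparing the two competing definitions of $(-\Delta)^{\alpha/2}$ is already settled by the cited references, so the proof reduces to the direct computation above.
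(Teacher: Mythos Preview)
Your argument is correct and essentially matches the paper's proof: the paper simply lists the first three eigenvalues $E_0=0$, $E_1=(2\pi/L_1)^\alpha$, $E_2=(4\pi/L_1)^\alpha$ together with the corresponding trigonometric eigenfunctions and substitutes into \eqref{gap1dpd}. The only cosmetic difference is that you use the complex exponential basis while the paper records the real sine eigenfunctions.
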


\smallskip

\begin{proof} When $n=1$ and $V_\bog(x)\equiv 0$ in \eqref{eq:eigper}, we know that the first three eigenvalues and their corresponding eigenfunctions
can be taken as \cite{BC,BL}
\be\label{per1d567}
\begin{split}
&E_0:=E_0(\alpha)=0, \qquad \phi_0^{(\alpha)}(x)\equiv\frac{1}{\sqrt{L_1}},\\
&E_1:=E_1(\alpha)=\left(\frac{2\pi}{L_1}\right)^{\alpha}, \quad
\phi_1^{(\alpha)}(x)=\sqrt{\frac{2}{L_1}}\sin\left(\frac{2\pi x}{L_1}\right),\qquad 0\le x\le L_1,\\
&E_2:=E_2(\alpha)=\left(\frac{4\pi}{L_1}\right)^{\alpha},\quad
\phi_2^{(\alpha)}(x)=\sqrt{\frac{2}{L_1}}\sin\left(\frac{4\pi x}{L_1}\right).
\end{split}
\ee
Plugging \eqref{per1d567} into \eqref{gap1dpd}, we obtain \eqref{gap1dp789}
immediately.
\end{proof}

\smallskip

\begin{lemma}\label{per2d}
Take $n=2$ and $V_\bog(\bx)\equiv 0$ in \eqref{eq:eigper}, then we have
\be\label{gap2dp789}
\delta_{\rm per}(\alpha)=\begin{cases}
\frac{(2\pi)^{\alpha}(2^{\alpha/2}-1)}{L_1^{\alpha}}, &\text{ if } L_1=L_2, \\
\frac{(2\pi)^{\alpha}}{L_2^{\alpha}}-\frac{(2\pi)^{\alpha}}{L_1^{\alpha}}, &\text{ if } L_2<L_1\le2L_2,\\
\frac{(2\pi)^{\alpha}(2^{\alpha}-1)}{L_1^{\alpha}}, &\text{ if } L_1\ge2L_2.
\end{cases}
\ee
\end{lemma}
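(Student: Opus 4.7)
The plan is to extend the computation of Lemma~\ref{per1d} by diagonalizing $(-\Delta)^{\alpha/2}$ on the torus $\Omega=(0,L_1)\times(0,L_2)$. Since with periodic boundary conditions and $V_\bog\equiv 0$ the two definitions \eqref{def:FL_I} and \eqref{lfl11} coincide, the eigenfunctions are the Fourier modes $e^{2\pi i(m_1 x/L_1+m_2 y/L_2)}$ indexed by $\bm=(m_1,m_2)\in\mathbb{Z}^2$, with eigenvalues $\mu_\bm^{\alpha/2}$, where
\be
\mu_\bm := \left(\frac{2\pi m_1}{L_1}\right)^2 + \left(\frac{2\pi m_2}{L_2}\right)^2.
\ee
Since $x\mapsto x^{\alpha/2}$ is strictly increasing for $0<\alpha\le 2$, computing $\lmd_1(\alpha)$ and $\lmd_2(\alpha)$ reduces to ranking the distinct positive values of $\mu_\bm$.

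The assumption $L_1\ge L_2>0$ identifies the smallest nonzero $\mu_\bm$ as $\mu_{(\pm 1,0)}=(2\pi/L_1)^2$, giving $\lmd_1(\alpha)=(2\pi/L_1)^\alpha$. The competitors for the second-smallest distinct value of $\mu_\bm$ are $\mu_{(\pm 2,0)}=(4\pi/L_1)^2$, $\mu_{(0,\pm 1)}=(2\pi/L_2)^2$, and $\mu_{(\pm 1,\pm 1)}=\mu_{(\pm 1,0)}+\mu_{(0,\pm 1)}$; any other lattice point yields a strictly larger value by coordinate-wise comparison. Moreover $\mu_{(\pm 1,\pm 1)}>\mu_{(0,\pm 1)}$, so the corner mode can produce $\lmd_2$ only when its value coincides with that of $(0,\pm 1)$ after the latter has already been absorbed into $\lmd_1$---that is, when $L_1=L_2$.

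I would then split into the three cases of the lemma. If $L_1=L_2$, both $(\pm 1,0)$ and $(0,\pm 1)$ lie in the $\lmd_1$-eigenspace, and the next distinct value is $\mu_{(\pm 1,\pm 1)}=2(2\pi/L_1)^2$, which is strictly less than $\mu_{(\pm 2,0)}=4(2\pi/L_1)^2$; this gives $\lmd_2(\alpha)=2^{\alpha/2}(2\pi/L_1)^\alpha$. If $L_2<L_1\le 2L_2$, the inequality $(2\pi/L_2)^2\le(4\pi/L_1)^2$ gives $\mu_{(0,\pm 1)}\le\mu_{(\pm 2,0)}$, hence $\lmd_2(\alpha)=(2\pi/L_2)^\alpha$. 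If $L_1\ge 2L_2$, the reverse inequality forces $\lmd_2(\alpha)=(4\pi/L_1)^\alpha$. Substituting each outcome into \eqref{gap1dpd} produces the three formulas of \eqref{gap2dp789}; at the overlap $L_1=2L_2$ one checks consistency via $(2\pi/L_2)^\alpha=(4\pi/L_1)^\alpha$.

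The only mildly delicate point is bookkeeping: degenerate eigenvalues come in pairs or quadruples and must be counted as a single distinct level before passing to $\lmd_2$. Once that is observed, the three cases close themselves using only monotonicity of $x\mapsto x^{\alpha/2}$, and no further estimate is needed.
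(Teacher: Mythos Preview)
Your proof is correct and follows the same route as the paper: identify the eigenfunctions of \eqref{eq:eigper} with $V_\bog\equiv 0$ as the Fourier modes on the torus, rank the distinct positive values of $\mu_\bm$, and read off $\lambda_1(\alpha)$ and $\lambda_2(\alpha)$ in each regime of $L_1/L_2$. The paper's proof is terser---it simply lists the first three eigenpairs in each case without explaining why no other lattice point intervenes---whereas you explicitly enumerate the candidates $(\pm2,0)$, $(0,\pm1)$, $(\pm1,\pm1)$ and rule out the rest by coordinate comparison, which is the cleaner presentation. One small wording glitch: the corner value $\mu_{(\pm1,\pm1)}$ never ``coincides'' with $\mu_{(0,\pm1)}$; what you mean is that the corner mode can only be $\lambda_2$ once $(0,\pm1)$ has been absorbed into the $\lambda_1$-level, which happens exactly when $L_1=L_2$.
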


\smallskip
\begin{proof}
When $n=2$ and $V_\bog(\bx)\equiv 0$ in \eqref{eq:eigper}, when $L_1=L_2$,
we know that the first three eigenvalues and their corresponding eigenfunctions
can be taken as \cite{BC,BL}
\be\label{per2d567}
\begin{split}
&E_0:=E_0(\alpha)=0, \qquad \phi_0^{(\alpha)}(\bx)\equiv A_0:=\frac{1}{\sqrt{\prod_{j=1}^2L_j}},\\
&E_1:=E_1(\alpha)=\left(\frac{2\pi}{L_1}\right)^{\alpha}, \quad
\phi_1^{(\alpha)}(\bx)=\sqrt{2}A_0\sin\left(\frac{2\pi x}{L_1}\right),\qquad \bx=(x,y)^T\in \Omega,\\
&E_2:=E_2(\alpha)=E_2(\alpha)=\left(\frac{2\sqrt{2}\pi}
{L_1}\right)^{\alpha},\quad
\phi_2^{(\alpha)}(\bx)=2A_0\sin\left(\frac{2\pi x}{L_1}\right)\sin\left(\frac{2\pi y}{L_2}\right).
\end{split}
\ee
Plugging \eqref{per2d567} into \eqref{gap1dpd}, we obtain \eqref{gap2dp789}
when $L_1=L_2$
immediately. Similarly, when $L_1>L_2$, we get
\be\label{per2d5678}
\begin{split}
&E_0(\alpha)=0, \qquad \phi_0^{(\alpha)}(\bx)\equiv A_0:=\frac{1}{\sqrt{\prod_{j=1}^2L_j}},\\
&E_1(\alpha)=\left(\frac{2\pi}{L_1}\right)^{\alpha}, \quad
\phi_1^{(\alpha)}(\bx)=\sqrt{2}A_0\sin\left(\frac{2\pi x}{L_1}\right),\qquad \bx=(x,y)^T\in \Omega,\\
&E_2(\alpha)=\left\{
\begin{array}{l}
\left(\frac{4\pi}{L_1}\right)^{\alpha},\\
\left(\frac{2\pi}{L_2}\right)^{\alpha},\\
\end{array}\right.
\
\phi_2^{(\alpha)}(\bx)=\left\{\begin{array}{ll}
\sqrt{2}A_0\sin\left(\frac{4\pi x}{L_1}\right),  &\text{ if } L_1\ge2L_2,\\
\sqrt{2}A_0\sin\left(\frac{2\pi y}{L_2}\right), &\text{ if } L_2<L_1\le2L_2.\\
\end{array}\right.
\end{split}
\ee
Plugging \eqref{per2d5678} into \eqref{gap1dpd}, we obtain \eqref{gap2dp789}
when $L_1=L_2$
immediately.
\end{proof}

For the convenience of readers, Fig. \ref{fig:periodic_2D}
shows the phase diagram of the first several
eigenvalues and their corresponding eigenfunctions
of \eqref{eq:eigper}  with respect to $L_1/L_2$ when $n=2$.

\begin{figure}[htbp]
\centerline{\psfig{figure=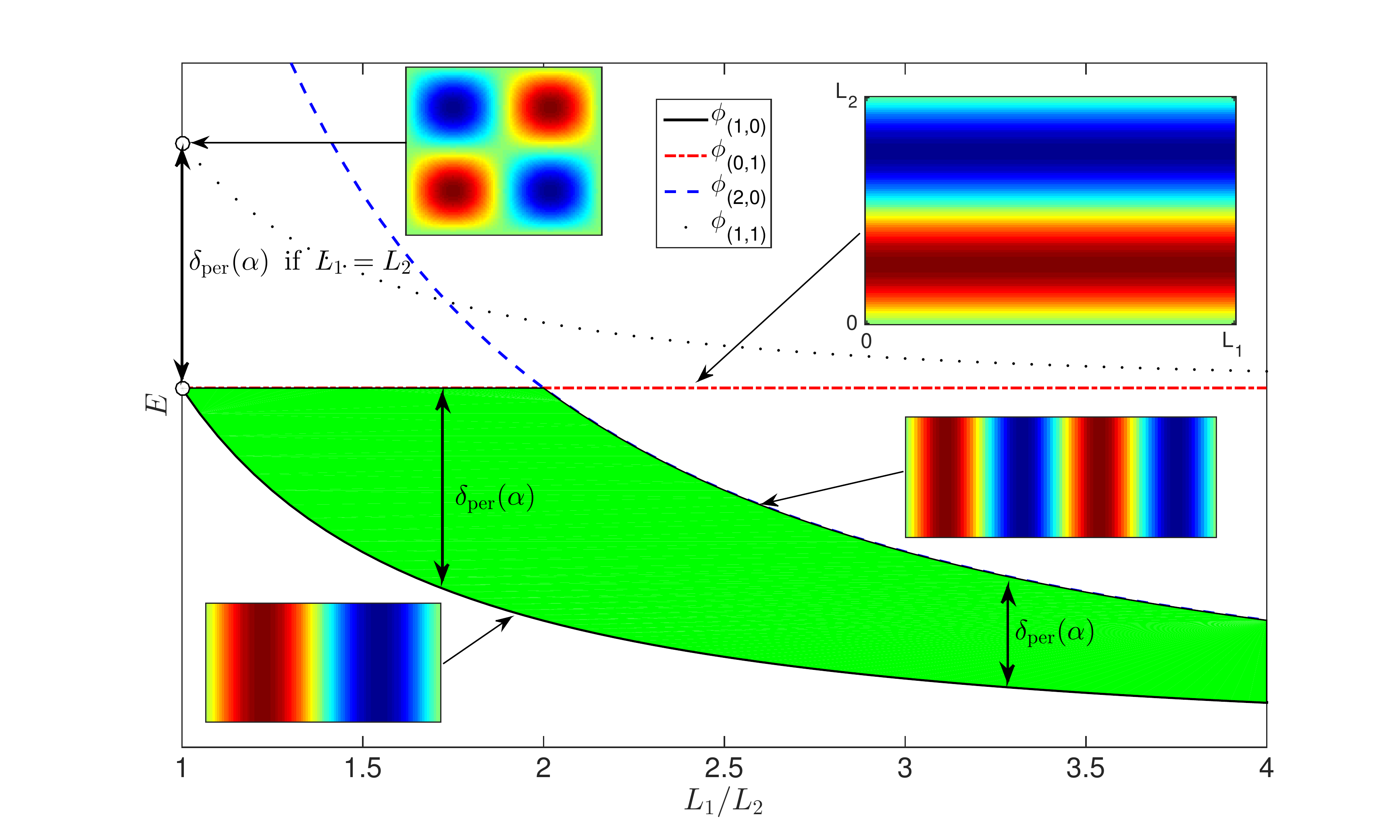,height=8cm,width=14cm,angle=0}}
\caption{Phase diagram of the first several eigenvalues
and their corresponding eigenfunctions of \eqref{eq:eigper}
with $n=2$, $\alpha=1.5$ and $V_\bog(\bx)\equiv 0$
for different $L_1/L_2$.
Obviously, for different ratios $L_1/L_2$, the choice of the second excited state $\phi_2^{(\alpha)}$ is different. The green part denotes the fundamental gap
$\delta_{\rm per}(\alpha)$ for $L_1>l_2$. }
\label{fig:periodic_2D}
\end{figure}

\section{Conclusion}\label{sec:conclusion}
By using asymptotic and numerical methods,
we obtain the fundamental gaps of the fractional
Schr\"{o}dinger operator (FSO) in different cases including
the local FSO on bounded domains, the FSO on bounded domains with zero
extension outside the domains, the FSO in the whole space,
and the FSO on bounded domains with periodic boundary conditions.
Based on our asymptotic and numerical results,
we formulate gap conjectures of the fundamental gap of the
FSO in different cases. The gap conjectures link the algebraic
property -- difference of the first two smallest eigenvalues of
the eigenvalue problem -- and the geometric property -- diameters
of the bounded domains.


\begin{thebibliography}{}

\bibitem{Ainsw17}
{\sc M. Ainsworth, and Z. Mao},
\textit{Analysis and approximation of a fractional Cahn-Hilliard equation},  SIAM J. Numer. Anal., 55 (2017), 1689--1718.

\bibitem{Andrews}
{\sc B. Andrews and J. Clutterbuck},
\textit{Proof of the fundamental gap conjecture}, J. Amer. Math. Soc., 24 (2011), 899--899.

\bibitem{Ashbaugh1}
{\sc M. Ashbaugh},
\textit{The fundamental gap, Workshop on Low Eigenvalues of Laplace and Schr\"odinger Operators}, American Institute of Mathematics, Palo Alto, California (2006).

\bibitem{Ashbaugh2}
\sc{M. S. Ashbaugh and R. Benguria},
\textit{Optimal lower bound for the gap between the first two
eigenvalues of one-dimensional Schr\"odinger operators with symmetric single-well potentials},
Proc. Amer. Math. Soc., 105 (1989), 419--424.

\bibitem{Askey}
\sc{R. A. Askey and A. B. Olde Daalhuis},
\textit{Generalized hypergeometric function}, NIST Handbook of Mathematical Functions, Cambridge University Press, 2010.


\bibitem{Banuelos}
{\sc R. Ba{\~n}uelos and T. Kulczycki},
\textit{ The Cauchy process and the Steklov problem},
J. Funct. Anal., 211 (2004), 355--423.

\bibitem{BBC}
{\sc W. Bao, N. Ben Abdallah and Y. Cai},
\textit{Gross-Pitaevskii-Poisson equations for dipolar Bose-Einstein condensate with anisotropic confinement}, SIAM J. Math. Anal.,
44 (2012), pp. 1713-1741.

\bibitem{BC}
{\sc W. Bao and Y. Cai},
\textit{Mathematical theory and numerical methods for Bose-Einstein
condensation}, Kinet. Relat. Mod., 6 (2013),  1-135.

\bibitem{BJMZ}
{\sc W. Bao, H. Jian, N. J. Mauser and Y. Zhang},
\textit{Dimension reduction of the Schr\"{o}dinger equation with Coulomb and anisotropic confining potentials},
SIAM J. Appl. Math., 73 (2013),  2100-2123.


\bibitem{BL}
{\sc W. Bao and F. Y. Lim},
\textit{Analysis and computation for the
semiclassical limits of the ground and excited states of the
Gross-Pitaevskii equation}, Proc. Sympos. Appl.
Math.,  Amer. Math. Soc., 67 (2009),  195-215.

\bibitem{BaoRuan}
{\sc  W.~Bao and X.~Ruan},
\textit{Fundamental gaps  of the Gross-Pitaevskii equation with repulsive interaction}, 	arXiv:1512.07123.

\bibitem{Barrios}
{\sc B. Barrios, E. Colorado, A. De Pablo and U. Sanchez},
\textit{ On some critical problems for the fractional Laplacian operator}, J. Differential Equations, 252 (2012), 6133--6162.

\bibitem{bueno14}
{\sc A. Bueno-Orovio, D.  Kay, and K. Burrage},
\textit{Fourier spectral methods for fractional-in-space reaction-diffusion equations},
BIT, 54 (2014), 937--954.


\bibitem{Cabre}
{\sc X. Cabr\'{e}, J. Tan},
\textit{ Positive solutions of nonlinear problems involving the square root of the Laplacian}, Adv. Math. 224 (2010), 2052--2093.

\bibitem{Caffarelli1}
{\sc L. Caffarelli and L. Silvestre},
\textit{An extension problem related to the fractional Laplacian}, Comm. Partial Differential Equations, 32 (2007), 1245--1260.

\bibitem{Caffarelli2}
{\sc L. Caffarelli and L. Silvestre},
\textit{Regularity theory for fully nonlinear integro-differential equations}, Comm. Pure Appl. Math. 62 (2009), 597--638.

\bibitem{Caffarelli3}
{\sc L. Caffarelli and L. Silvestre},
\textit{ Regularity results for nonlocal equations by approximation},
Arch. Ration. Mech. Anal. 200 (2011), 59--88.


\bibitem{CRLB}
{\sc Y. Cai, M. Rosenkranz, Z. Lei and W. Bao},
\textit{Mean-field regime of trapped dipolar Bose-Einstein
condensates in one and two dimensions},
Phys. Rev. A,  82 (2010), 043623.

\bibitem{Capella}
{\sc A. Capella},
\textit{Solutions of a pure critical exponent problem involving the half-Laplacian in annular-shaped domains},
Commun. Pure Appl. Anal., 10 (2011), no. 6, 1645--1662.

\bibitem{Chen}
{\sc Z. -Q. Chen and R. Song},
\textit{ Two-sided eigenvalue estimates for subordinate processes in domains},
J. Funct. Anal., 226 (2005),  90--113.

\bibitem{Daalhuis}
A. B.  Olde Daalhuis,
\textit{Hypergeometric function}, NIST Handbook of Mathematical Functions, Cambridge University Press, 2010.

\bibitem{Deblassie}
{\sc R. D. Deblassie},
\textit{ Higher order PDEs and symmetric stable processes}, Probab. Theory Rel.,
129 (2004),  495--536.

\bibitem{Duo}
\sc{S. Duo and Y. Zhang},
\textit{Computing the ground and first excited states of the fractional Schr\"odinger equation in an infinite potential well},
Commun Comput Phys., 18 (2015), 321--350.

\bibitem{Feynman1}
 {\sc R. P. Feynman and A.R. Hibbs},
 \textit{Quantum Mechanics and Path Integrals}, McGraw-Hill, New York, 1965.

\bibitem{Feynman2}
{\sc R.P. Feynman},
\textit{Statistical Mechanics}, Benjamin. Reading, Mass. 1972.

\bibitem{Jeng}
\sc{M. Jeng, S.-L.-Y. Xu, E. Hawkins, and J. M. Schwarz},
\textit{ On the nonlocality of the fractional Schr\"{o}dinger equation},
J. Math. Phys. 51, 062102 (2010).

\bibitem{huang2014}
\sc{Y. Huang, and A. Oberman},
\textit{Numerical methods for the fractional Laplacian: A finite difference-quadrature approach},
SIAM J. Numer. Anal., 52, (2010) 3056--3084.


\bibitem{ilic05}
\sc{M. Ilic, F. Liu, I. Turner, and V. Anh},
\textit{Numerical approximation of a fractional-in-space diffusion equation, I},
 Fract. Calc. Appl. Anal., 8, (2005) 323--341.


\bibitem{ilic06}
\sc{M. Ilic, F. Liu, I. Turner, and V. Anh},
\textit{Numerical approximation of a fractional-in-space diffusion equation (II)--with nonhomogeneous boundary conditions}, Fract. Calc. Appl. Anal., 9, (2006) 333--349.


\bibitem{klein2014}
\sc{C. Klein, C. Sparber, and P. Markowich},
\textit{Numerical study of fractional nonlinear Schr{\"o}dinger equations},
Proc. R. Soc. A  470, (2014).

\bibitem{Kwasnicki}
{\sc M. Kwa\'snicki},
\textit{ Eigenvalues of the fractional Laplace operator in the interval},
 J. Funct. Anal., 262 (2012), 2379--2402.

\bibitem{Laskin1}
{\sc N. Laskin},
\textit{Fractional quantum mechanics and L\'{e}vy path integrals}, Phys. Lett. A, 268 (2000), 298--305.

\bibitem{Laskin2}
{\sc N. Laskin},
\textit{Fractional quantum mechanics},
Phys. Rev. E, 62 (2000), 3135--3145.

\bibitem{Laskin3}
{\sc N. Laskin},
\textit{ Fractional Schr\"{o}dinger equation},
Phys. Rev. E 66 (2002), 056108.


\bibitem{mao2017}
{\sc Z. Mao and J. Shen},
\textit{Hermite Spectral Methods for Fractional PDEs in Unbounded Domains},
SIAM J. Sci. Comput., 39 (2017), 1928--1950.

\bibitem{Nezza}
{\sc E. Di Nezza, G. Palatucci and E. Valdinoci},
\textit{ Hitchhiker's guide to the fractional Sobolev spaces}, Bull. Sci. Math., 136 (2012), 521--573.

\bibitem{Podlubny}
{\sc I. Podlubny},
\textit{Fractional Differential Equations},
Academic Press, 1999.

\bibitem{Roncal}
\sc{L. Roncal and P. R. Stinga},
\textit{Fractional Laplacian on the torus}, Commun. Contemp.
Math. 18 (2016), 1550033.

\bibitem{Roncal2}
\sc{L. Roncal and P. R. Stinga},
\textit{Transference of fractional Laplacian regularity}, In: Georgakis C., Stokolos A., Urbina W. (eds) Special Functions, Partial Differential Equations, and Harmonic Analysis. Springer Proc.
Math. Stat.,  108. Springer, Cham, 2014.

\bibitem{Samko}
{\sc S. G. Samko, A. A. Kilbas, and O. I. Maritchev},
\textit{Fractional Integrals and Derivatives}, Gordon and Breach,
New York, 1993.


\bibitem{Servadei}
{\sc R. Servadei and E. Valdinoci},
\textit{On the spectrum of two different fractional operators}, Proceedings of the Royal Society of Edinburgh: Section A Mathematics, 144 (2014), 831--855.

\bibitem{Sheng18}
{\sc C. Sheng, J. Shen and D. Cao},
\textit{Solving PDEs with Fractional Laplacian by using  Fourierization of the Legendre-Galerkin method}, submitted.


\bibitem{Sheng182}
{\sc C. Sheng and J. Shen},
\textit{Fourierization of Mapped Chebyshev spectral method for Fractional PDEs in unbounded domains}, submitted.

\bibitem{Singer}
\sc{I. Singer, B. Wong, S.-T. Yau and S. S.-T. Yau}, \textit{An estimate of the gap of the first two
eigenvalues in the Schr\"odinger operator}, Ann. Sc. Norm. Super. Pisa Cl. Sci. (4), 12 (1985),
319--333.

\bibitem{Stinga}
 \sc{P. R. Stinga},
 \textit{Fractional powers of second order partial differential operators: extension problem and regularity theory}, PhD thesis, Universidad Aut\'onoma de Madrid, Madrid (2010).

\bibitem{Tan}
{\sc J. Tan}, \textit{The Brezis-Nirenberg type problem involving the square root of the Laplacian}, Calc. Var. Partial
Differential Equations, 36 (2011), 21--41.

\bibitem{Zoia}
{\sc A. Zoia, A. Rosso, and M. Kardar},
\textit{Fractional Laplacian in bounded domains},
Phys. Rev. E 76 (2007), 021116.

\end{thebibliography}
\end{document}